\theoremstyle{plain}
\newtheorem*{prop}{Proposition}
\newtheorem*{thrm}{Theorem}
\newtheorem*{cor}{Corollary}
\newtheorem{lem}{Lemma}
\newtheorem*{lemma}{Lemma}
  \theoremstyle{remark}
\newtheorem*{exs}{Examples}
\newtheorem*{defn}{Definition}
\newtheorem*{rem}{Remark}
\newtheorem*{remarks}{Remarks}
\newtheorem*{rems}{Remarks}
\newtheorem*{question}{Question}
\theoremstyle{definition}
\DeclareMathOperator{\gr}{gr}
\DeclareMathOperator{\GKdim}{GKdim}
\begin{document}

\title[Connected Hopf algebras and
iterated Ore extensions]
{Connected Hopf algebras and \\
iterated Ore extensions}

\author{K.A. Brown, S. O'Hagan, J.J. Zhang and G. Zhuang}

\address{Brown and O'Hagan: School of Mathematics and Statistics,
University of Glasgow, Glasgow G12 8QW, UK}

\email{Ken.Brown@glasgow.ac.uk}

\email{steven.ohagan@gmail.com}

\address{Zhang: Department of Mathematics,  University of Washington,
Box 354350, Seattle, WA 98195-4350, USA}

\email{zhang@math.washington.edu}

\address{Zhuang: Department of Mathematics, University of Southern
California, 3620 S. Vermont Ave., Los Angeles, CA 90089-2532, USA}

\email{gzhuang@usc.edu}

\subjclass{16T05, 16S36}
\keywords{Hopf algebra; skew polynomial extension; Gelfand-Kirillov
dimension}

\begin{abstract} We investigate when a skew polynomial extension
$T = R[x;\sigma,\delta]$ of a Hopf algebra $R$ admits a Hopf algebra
structure, substantially generalising a theorem of Panov. When this
construction is applied iteratively in characteristic 0 one obtains a
large family of connected noetherian Hopf algebras of finite
Gelfand-Kirillov dimension, including for example all enveloping
algebras of finite dimensional solvable Lie algebras and all coordinate
rings of unipotent groups. The properties of these Hopf algebras are
investigated.
\end{abstract}

\maketitle

\section{Introduction}
\label{intro}
\subsection{}
This paper develops lines of research begun in \cite{Zh1} and in \cite{Pa}.
Zhuang in \cite{Zh1} initiated the study of connected affine Hopf
$k$-algebras of finite Gelfand-Kirillov dimension
(or GK-dimension, for short), over an algebraically
closed field $k$ of characteristic 0; that study was continued in
\cite{WZZ}, and is taken further here. In \cite{Pa}, Panov asked the
question: given a field $F$ and a Hopf $F$-algebra $R$, for which
algebra automorphisms $\sigma$ and $\sigma$-derivations $\delta$ can
the skew polynomial algebra $T = R[x;\sigma,\delta]$ be given a
structure of Hopf algebra extending the given structure on $R$?
(See (\ref{Oredefn}) for the definition of skew polynomial algebra.)
Here, we answer Panov's question in some special cases.
In the following two paragraphs we discuss these two themes
in more detail, starting with the second.

\subsection{Hopf Ore extensions}
\label{intro2}
Let $R$, $\sigma$, $\delta$ and $T$ be as in the previous paragraph.
The main theorem of \cite{Pa} answered the above question under the
additional hypothesis that $x$ is a skew primitive element of
$T$ - that is, there are group-like elements $a,b \in R$ such that
$\Delta (x) = a \otimes x + x \otimes b$, where $\Delta$ denotes the
coproduct of $T$. Our main result in this direction is
Theorem \ref{permit}. The following, which appears as part of
Corollary \ref{tensor}, is a consequence of that theorem and of
other lemmas in $\S$\ref{HopfOre}.

\begin{thrm}
Let $R$ be a Hopf $F$-algebra which is a domain. Let
$T = R[x;\sigma,\delta]$ be a skew polynomial algebra over $R$.
\begin{enumerate}
\item
Suppose that $T$ admits a Hopf algebra structure, with $R$ a Hopf
subalgebra. Suppose also that 
\begin{equation}\label{copout}\Delta(x)=a\otimes x+ x\otimes b+v(x\otimes x) +w, \end{equation}
where $a,b\in R$ and $v,w\in R\otimes R$.
Then, {\rm{(}}possibly after a change of the variable
$x${\rm{)}},
\begin{enumerate}
\item[(a)]
$\Delta (x) = a \otimes x + x \otimes 1 + w$, where $a$ is a
group-like element of $R$ and $w = \sum w_1 \otimes w_2 \in R \otimes R$;
\item[(b)]
$\varepsilon (x) = 0$ and $S(x) = -a^{-1}(x + \sum w_1 S(w_2))$;
\item[(c)]
There is a character $\chi:R \longrightarrow F$ such that
$$ \sigma = \tau^{\ell}_{\chi} = \mathrm{ad}(a) \circ \tau^{r}_{\chi};$$
that is, $\sigma$ is a left winding automorphism of $R$, and is the
composition of the corresponding right winding automorphism with
conjugation by $a$;
\item[(d)]
$\sigma, \delta, w$ and $a$ satisfy a specific set of identities.
\end{enumerate}
\item
Conversely, given $\sigma, \delta, w$ and $a$ as in {\rm{(i)}},
$T$ can be given a Hopf algebra structure extending the structure
on $R$, with $\Delta, \varepsilon, S$ as above.
\end{enumerate}
\end{thrm}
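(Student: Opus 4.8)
The plan is to grade the skew polynomial ring $T = \bigoplus_n Rx^n$ by degree in $x$ and to exploit repeatedly that $\Delta$ is an algebra map whose restriction to $R$ is the given coproduct. First I would use the freedom to replace $x$ by $x - \varepsilon(x)1$ so as to arrange $\varepsilon(x) = 0$; such a shift keeps $T$ a skew polynomial extension of $R$ (it merely alters $\delta$ by an inner $\sigma$-derivation, $\sigma$ being unchanged). Feeding \eqref{copout} into the two counit axioms $(\varepsilon\otimes\id)\Delta(x) = x = (\id\otimes\varepsilon)\Delta(x)$ and comparing coefficients of $x^0$ and $x^1$ (using that $T$ is free over $R$ on the powers of $x$) then yields at once $\varepsilon(a) = \varepsilon(b) = 1$ together with the normalisations $(\varepsilon\otimes\id)(w) = (\id\otimes\varepsilon)(w) = 0$; the quadratic term contributes nothing here, since its counit image carries a factor $\varepsilon(x) = 0$.

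The heart of the matter is to kill the quadratic term. I would substitute \eqref{copout} into coassociativity $(\Delta\otimes\id)\Delta(x) = (\id\otimes\Delta)\Delta(x)$ and isolate the component of top degree in $x$ (degree three in $T^{\otimes 3}$), which is fed only by the $v(x\otimes x)$ summand. Stripping the trailing factors of $x$ — legitimate because $T$, a skew polynomial ring over the domain $R$ with $\sigma$ an automorphism, is again a domain, so leading coefficients multiply — produces the coassociativity relation
$$(\Delta\otimes\id)(v)\,(v\otimes 1) = (\id\otimes\Delta)(v)\,(1\otimes v)$$
in $R\otimes R\otimes R$, where $v = \sum v_1\otimes v_2$. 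I expect this, combined with the vanishing of $\sum S(v_1)v_2$ and $\sum v_1 S(v_2)$ extracted from the antipode axioms applied to $x$, and a leading-coefficient count carried out in the domain $T$, to force $v = 0$. I anticipate this step — excluding a genuinely quadratic coproduct for a single skew variable over a domain — to be the main obstacle, and the one place where the domain hypothesis on $R$ is indispensable.

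Once $v = 0$, the coproduct has degree at most one in $x$, so the degree-one part of coassociativity collapses to something transparent: matching the terms carrying $x$ in the third and in the first tensor slot gives $\Delta(a) = a\otimes a$ and $\Delta(b) = b\otimes b$ respectively, whence, with $\varepsilon(a) = \varepsilon(b) = 1$, both $a$ and $b$ are group-like and in particular units of $R$. Left-multiplying the variable by $b^{-1}$ — a valid change of variable, as $b^{-1}$ is a unit — replaces $\Delta(x)$ by $(b^{-1}a)\otimes x + x\otimes 1 + (b^{-1}\otimes b^{-1})w$; after renaming $b^{-1}a$ as $a$ (still group-like) and $w$ accordingly, and noting $\varepsilon(x) = 0$ is preserved, this is exactly the normal form in (a). Claim (b) then drops out of the antipode axiom $m(\id\otimes S)\Delta(x) = \varepsilon(x)1 = 0$ applied to this form, solving for $S(x)$ and using $S(a) = a^{-1}$.

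At this point $\Delta(x) = a\otimes x + x\otimes 1 + w$ is precisely the configuration governed by Theorem~\ref{permit}, and I would simply invoke it for the remaining assertions. Expanding $\Delta(xr) = \Delta(x)\Delta(r)$ for $r \in R$ and separating components by degree in $x$ and by tensor slot is what produces the character $\chi$, exhibits $\sigma$ as the left winding automorphism $\tau^{\ell}_{\chi}$ and yields the identity $\tau^{\ell}_{\chi} = \ad(a)\circ\tau^{r}_{\chi}$ of (c), as well as the compatibility identities among $\sigma,\delta,w,a$ of (d); these are the content of Theorem~\ref{permit}. For the converse (ii) no further work is needed: the construction half of Theorem~\ref{permit} takes data $\sigma,\delta,w,a$ subject to the identities in (d), builds $\Delta,\varepsilon,S$ of the stated shape, and checks the Hopf axioms, thereby endowing $T$ with a Hopf algebra structure extending that of $R$.
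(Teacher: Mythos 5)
Your outline reproduces the paper's strategy at the outer level (normalise $\varepsilon(x)$, kill the quadratic term, deduce group-likeness, rescale by $b^{-1}$, then hand off to Theorem~\ref{prowinding}), but it has a genuine gap exactly at the step you yourself flag as the main obstacle: the elimination of $v(x\otimes x)$. The relations you propose to extract are not sufficient. The top-degree coassociativity identity $(\Delta\otimes\id)(v)(v\otimes 1)=(\id\otimes\Delta)(v)(1\otimes v)$ has many nonzero solutions (e.g.\ $v=1\otimes 1$), and an identity of the shape $\sum S(v_1)v_2=0$ likewise does not force $v=0$ (take $v=\Delta(r)-\varepsilon(r)1\otimes 1$). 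Moreover, to extract anything from the antipode axiom applied to $x$ you must first control $S(x)$ as a polynomial in $x$; you never establish its degree, and without that the ``degree count'' you invoke cannot even be set up, since the putative top-degree coefficient involves unknown leading coefficients of $S(x)$ and expressions like $\mu((1\otimes\sigma)(v))$ that can vanish without $v$ vanishing.

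The paper closes this gap with two separate lemmas, both of which are missing from your proposal. First, Lemma~\ref{gplike}(ii) shows $v\in F$ is a \emph{scalar}: this comes not from the top-degree term of coassociativity but from comparing the coefficients of $1\otimes x\otimes x$ and $x\otimes x\otimes 1$, in a two-case analysis according to whether $w=0$. Second, Lemma~\ref{anti}(i) shows $S(x)=\alpha x+\beta$ with $\alpha\in R^{\times}$; the trick is that $S$ being an anti-automorphism makes $T=R[S(x);S\sigma^{-1}S^{-1},-S\delta\sigma^{-1}S^{-1}]$ an Ore extension in the new variable $S(x)$, and expressing $x$ back as a polynomial in $S(x)$ over the domain $R$ forces $\deg_x S(x)=1$ with invertible leading coefficient --- this, rather than the coassociativity analysis, is where the domain hypothesis on $R$ is really spent. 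Only with both facts in hand does $\mu(\id\otimes S)\Delta(x)=\varepsilon(x)$ produce a single degree-two term $v\sigma(\alpha)x^2$, whose vanishing (with $\sigma(\alpha)$ a unit and $v$ scalar) yields $v=0$. The remainder of your argument (group-likeness of $a,b$ once $v=0$, the $b^{-1}$ rescaling, and the appeal to Theorem~\ref{prowinding} for (b)--(d) and the converse) matches the paper and is fine.
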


The identities referred to in (i)(d) of the theorem are \eqref{delta},
\eqref{antiw} and \eqref{pinned} in Theorem \ref{permit}. We call
a Hopf algebra $T$ which is a skew polynomial algebra over a Hopf
subalgebra $R$, as in part (i) of the above theorem,
a \emph{Hopf Ore extension} (HOE) of $R$ - see Definition \ref{HOEdef}.

\subsection{The coproduct of the indeterminate}\label{coprod} 
A very obvious question immediately arises from Theorem \ref{intro2}: 
Given a Hopf $F$-algebra $T = R[x;\sigma,\delta]$, with $R$ a 
Hopf subalgebra, can we always change the variable $x$ so that 
$\Delta (x)$ has the form (\ref{copout})? We don't know the 
answer to this question. But we do show, in Lemma 1 of 
paragraph \ref{coHOE}, that $\Delta (x)$ does have the form 
(\ref{copout}), provided we allow the first two terms to be 
$a(1 \otimes x)$ and $b(x \otimes 1)$ with $a$ and $b$ in 
$R \otimes R$, and provided we require $R \otimes R$ to be a domain.

A central motivation for us in this work has been the study of \emph{connected} Hopf $k$-algebras (that is, Hopf $k$-algebras whose coradical is $k$), where $k$ is algebraically closed of characteristic 0. For these, we can indeed give a positive answer to the above question. The following result appears as Proposition \ref{skewconn}.

\begin{thrm}
Let $k$ be algebraically closed of characteristic 0, and $R$
a connected Hopf $k$-algebra and let $T = R[x; \sigma, \delta]$ be
a Hopf algebra containing $R$ as a Hopf subalgebra. Then
$$\Delta(x)=1\otimes x+x\otimes 1+w$$
for some $w\in R\otimes R$. Consequently, $T$ satisfies all the conclusions of Theorem \ref{intro2}, with $a = 1$. Moreover, $T$ is connected.
\end{thrm}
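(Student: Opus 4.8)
The plan is to start from the weak normal form for $\Delta(x)$ furnished by the preceding Lemma and then use connectedness of $R$ to eliminate every term except the two primitive ones and a tail in $R\otimes R$. First I record that the hypotheses let me apply that Lemma: since $\chr k=0$ and $R$ is connected, the associated graded $\gr R$ for the coradical filtration is a commutative (polynomial) Hopf algebra, so $R$ is a domain; and as $k$ is algebraically closed, $R\otimes R$ is a domain as well (argue through the tensor filtration, whose associated graded is $\gr R\otimes\gr R$, again a polynomial algebra, hence a domain). Thus the Lemma gives $\Delta(x)=a(1\otimes x)+b(x\otimes 1)+v(x\otimes x)+w$ with $a,b,v,w\in R\otimes R$. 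Replacing $x$ by $x-\varepsilon(x)$ (which only alters $\delta$ to another $\sigma$-derivation and changes $w$ by a scalar multiple of $1\otimes 1$, so leaves the asserted conclusion invariant) I may also assume $\varepsilon(x)=0$.

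The heart of the argument, and the step I expect to be the main obstacle, is to show $v=0$. Writing $T=\boplus_{n\ge0}Rx^n$ and filtering by $x$-degree, I extract the component of $(\Delta\otimes\id)\Delta(x)=(\id\otimes\Delta)\Delta(x)$ lying in $Rx\otimes Rx\otimes Rx$ (the top $x$-degree, fed only by the $v$-term). After cancelling the free factors of $x$ this yields the cocycle identity $(\Delta\otimes\id)(v)\,(v\otimes 1)=(\id\otimes\Delta)(v)\,(1\otimes v)$ in the domain $R\otimes R\otimes R$. Passing to the commutative polynomial algebra $(\gr R)^{\otimes 3}$ and analysing this identity degree by degree, connectedness of $R$ forces $v$ to be a scalar $\mu(1\otimes 1)$. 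To remove the surviving constant I use the antipode: comparing the highest $x$-degrees on the two sides of $m(S\otimes\id)\Delta(x)=\varepsilon(x)1=0$, the leading coefficient is $\mu$ times the leading coefficient of $S(x)$; since $S$ is bijective and $x\notin R$, the latter is nonzero, so $\mu=0$ and $v=0$. The delicate point is precisely that the cocycle identity \emph{alone} permits a nonzero constant, so both the graded cocycle analysis and the antipode degree count are genuinely needed.

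With $v=0$ the remaining coefficients drop out by a clean degree count. Now $\Delta(x)=a(1\otimes x)+b(x\otimes1)+w$, and extracting the component of coassociativity in $R\otimes R\otimes Rx$ gives $(\Delta\otimes\id)(a)=(\id\otimes\Delta)(a)\,(1\otimes a)$ together with the normalisation $(\varepsilon\otimes\id)(a)=1$. In $(\gr R)^{\otimes 3}$ the left-hand side has the coradical degree of $a$, while the right-hand side, a product of two factors of that degree in a domain, has twice it; hence $a$ has coradical degree $0$, i.e.\ $a\in k(1\otimes 1)$, and the normalisation forces $a=1\otimes 1$. The mirror computation on the $Rx\otimes R\otimes R$ component gives $b=1\otimes 1$. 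Therefore $\Delta(x)=1\otimes x+x\otimes 1+w$ with $w\in R\otimes R$.

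Connectedness of $T$ then follows formally. Since $v=0$, the $x$-degree filtration $F_n=\boplus_{i\le n}Rx^i$ is a genuine coalgebra filtration: $\Delta(x)\in F_0\otimes F_1+F_1\otimes F_0$, and because $\Delta$ is an algebra map and $\{F_n\}$ an algebra filtration this propagates to $\Delta(F_n)\subseteq\sum_{i+j=n}F_i\otimes F_j$. As the coradical is contained in the bottom term of any coalgebra filtration, $C_0(T)\subseteq F_0=R$; hence every simple subcoalgebra of $T$ lies in $R$, so lies in $C_0(R)=k$, giving $C_0(T)=k$ and the connectedness of $T$. Finally, the displayed coproduct is exactly the form required by Theorem~\ref{intro2} with $a=1$, so the remaining assertions (in particular $\varepsilon(x)=0$ and the stated value of $S(x)$) are read off from that theorem.
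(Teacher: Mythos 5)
Your proof is correct and follows essentially the same route as the paper's: reduce to the four-term form of $\Delta(x)$ via the comultiplication lemma (using that $R\otimes R$ and $(\gr R)^{\otimes 3}$ are domains), force $v$, $s$ and $t$ to be scalars by degree arguments in the associated graded algebra of the coradical filtration together with an antipode computation, and deduce connectedness of $T$ from the $x$-degree coalgebra filtration. The only organisational difference is that you apply $m(S\otimes\mathrm{id})$ directly to the four-term coproduct and compare top $x$-degrees to kill the residual scalar $v=\mu$, whereas the paper first normalises $s$ and $t$ (under the assumption $\mu\neq 0$) so as to invoke Lemma \ref{anti}; both rest on the same top-degree cancellation in the domain $R$.
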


\subsection{Iterated Hopf Ore extensions}
\label{IHOEintro}
The preservation of the connected property in Theorem \ref{coprod} makes it very natural to adopt an inductive definition. Thus we define an \emph{iterated Hopf Ore extension} (IHOE)
to be a Hopf $F$-algebra $H$ containing a chain of Hopf subalgebras
\begin{equation}
\label{introdef1}
F = H_{(0)} \subset \cdots \subset H_{(i)} \subset H_{(i+1)}
\subset \cdots \subset H_{(n)} = H,
\end{equation}
with each of the extensions $H_{(i)} \subset H_{(i+1)} :=
H_{(i)}[x_{i+1}; \sigma_{i+1},\delta_{i+1}]$ being a HOE.
From repeated applications of Theorem \ref{coprod}, using also Theorem \ref{GKdim}, we obtain:

\begin{thrm}
If $H$ is an IHOE as in \eqref{introdef1}, 
{\rm{(}}with the base field algebraically closed of characteristic 0{\rm{)}}, 
then $H$ is a connected
noetherian Hopf algebra of GK-dimension $n$. After changes of the
defining variables $\{x_i\}$ but not of the chain \eqref{introdef1},
for each $i = 1, \ldots , n,$
\begin{equation}
\Delta (x_i) =  1 \otimes x_i + x_i \otimes 1 + w^{i-1},
\end{equation}
where $w^{i-1} \in H_{(i-1)} \otimes H_{(i-1)}$, for $i = 1, \ldots , n$,
with $w^0 = w^1 = 0$. Moreover, the data
$\{x_i,\sigma_j, \delta_j, w^{i-1} : 2 \leq j \leq n,\, 1 \leq i \leq n \}$
satisfy the conditions listed in Theorem {\rm{\ref{prowinding}}},
with $a = 1.$
\end{thrm}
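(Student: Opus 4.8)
The plan is to induct on the length $n$ of the chain \eqref{introdef1}, with Theorem~\ref{coprod} as the engine that propagates connectedness and pins down the coproduct of the top variable at each stage. For the base of the induction, observe that $H_{(1)} = F[x_1;\sigma_1,\delta_1]$ is built over the base field $F = H_{(0)}$; since the only $F$-algebra automorphism of $F$ is the identity and the only $\sigma_1$-derivation of $F$ is zero, $H_{(1)} = F[x_1]$ is a commutative polynomial ring, which is connected, noetherian, and of GK-dimension $1$. Applying Theorem~\ref{coprod} with $R = F$ and $T = H_{(1)}$ forces $x_1$ (after a change of variable) to be primitive, giving $w^0 = 0$.

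For the inductive step, suppose $H_{(i)}$ is a connected noetherian Hopf $k$-algebra of GK-dimension $i$, with $x_1,\dots,x_i$ already adjusted so that $\Delta(x_j) = 1\otimes x_j + x_j\otimes 1 + w^{j-1}$ and $w^{j-1}\in H_{(j-1)}\otimes H_{(j-1)}$ for $j\le i$. Since $H_{(i)}\subset H_{(i+1)} = H_{(i)}[x_{i+1};\sigma_{i+1},\delta_{i+1}]$ is a HOE and $H_{(i)}$ is connected, Theorem~\ref{coprod} applies verbatim: after replacing $x_{i+1}$ by an element of the coset $x_{i+1} + H_{(i)}$ we get $\Delta(x_{i+1}) = 1\otimes x_{i+1} + x_{i+1}\otimes 1 + w^i$ with $w^i\in H_{(i)}\otimes H_{(i)}$, the algebra $H_{(i+1)}$ is again connected, and all conclusions of Theorem~\ref{intro2} hold with $a=1$ -- these being exactly the identities recorded in Theorem~\ref{prowinding}. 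Crucially, the change of variable is by an element of $H_{(i)}$, so it alters neither the chain \eqref{introdef1} nor the lower data $x_1,\dots,x_i$, which is what keeps the induction consistent and justifies ``changes of the defining variables but not of the chain.'' The noetherian property passes to $H_{(i+1)}$ by the Hilbert basis theorem for skew polynomial rings (here $\sigma_{i+1}$ is an automorphism), and $\GKdim H_{(i+1)} = \GKdim H_{(i)} + 1 = i+1$ is supplied by Theorem~\ref{GKdim}. This settles every assertion except $w^1 = 0$.

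It remains to prove the vanishing $w^1 = 0$, which I regard as the one genuinely computational point. Coassociativity of $\Delta(x_{i+1})$ together with $\varepsilon(x_{i+1}) = 0$ shows that each $w^i$ is a normalized $2$-cocycle for the coalgebra $H_{(i)}$, namely $(\Delta\otimes\mathrm{id})(w^i) - (\mathrm{id}\otimes\Delta)(w^i) + 1\otimes w^i - w^i\otimes 1 = 0$, and that replacing $x_{i+1}$ by $x_{i+1} - r$ changes $w^i$ by the coboundary $\bar\Delta(r) := \Delta(r) - 1\otimes r - r\otimes 1$. Thus $w^1$ can be cleared by a change of variable precisely when every normalized $2$-cocycle of $H_{(1)} = F[x_1]$ is a coboundary. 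Since $F[x_1]$ is graded with $x_1$ primitive, the cocycle condition is homogeneous, so it suffices to treat a homogeneous cocycle of total degree $N$; the recursion extracted from the cocycle identity then forces its coefficients to be proportional to the binomial coefficients in $\bar\Delta(x_1^N) = \sum_{k=1}^{N-1}\binom{N}{k}x_1^k\otimes x_1^{N-k}$, exhibiting it as a multiple of $\bar\Delta(x_1^N)$, hence a coboundary. Conceptually, modulo coboundaries $w^i$ is detected by its antisymmetric part in $\wedge^2 P(H_{(i)})$, which vanishes for $i=0$ (no primitives) and for $i=1$ (the primitives of $F[x_1]$ are one-dimensional); this is exactly why vanishing is asserted only at levels $0$ and $1$.

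The main obstacle is this $w^1 = 0$ step: unlike the remainder of the argument, which is bookkeeping around the cited Theorems~\ref{coprod} and~\ref{GKdim}, it requires the vanishing of the relevant normalized coalgebra $2$-cohomology of $F[x_1]$. One must verify that the cocycle identity really does force the binomial proportionality above (and that normalization removes the degenerate $1\otimes(-)$ and $(-)\otimes 1$ contributions), and then check that the clearing substitution $x_2\mapsto x_2 - r$ remains compatible with the winding-automorphism identities of Theorem~\ref{prowinding} already established, so that no previously arranged condition at level $1$ is disturbed.
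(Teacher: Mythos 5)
Your main induction is exactly the paper's argument: the paper proves this statement as Theorem \ref{firstprop}(i),(ii) by inducting along the chain, invoking Proposition \ref{skewconn} at each step for connectedness and the form of $\Delta(x_{i+1})$, the skew Hilbert basis theorem for noetherianity, and Theorem \ref{GKdim} for the GK-dimension count, and your observation that the variable change lives in $H_{(i)}$ and so does not disturb the chain is the same bookkeeping the paper relies on. Where you genuinely diverge is on $w^1=0$, which the paper's one-line proof of \ref{firstprop}(i) does not address explicitly; the paper's intended route is via the bound $\min\{2,\GKdim H\}\le \dim_k P(H)$ of \cite[Lemma 5.11]{Zh2} (quoted as \eqref{GKineq}), which forces $P(H_{(2)})$ to be two-dimensional, whence the argument of \ref{firstprop}(iii) produces a primitive element of the form $x_2-t$ with $t\in H_{(1)}$. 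Your alternative --- showing directly that every normalized coalgebra $2$-cocycle on $F[x_1]$ is a coboundary $\bar\Delta(r)$ --- is correct and closes: in total degree $N$ the identity \eqref{pinned} with $a=1$ reduces, on the coefficients $c_m$ of $x_1^m\otimes x_1^{N-m}$, to $\binom{a+b}{a}c_{a+b}=\binom{b+c}{b}c_a$ for $a,b,c\ge 1$, and setting $a=1$ gives $c_m=\tfrac{1}{N}\binom{N}{m}c_1$, i.e.\ $w=\tfrac{c_1}{N}\bar\Delta(x_1^N)$ (characteristic $0$ is used to divide by $N$); normalization $w\in R^+\otimes R^+$ comes from \ref{prowinding}(i)(f), and $\varepsilon(r)=0$ follows by applying $\varepsilon\otimes\mathrm{Id}$. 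Your route is more self-contained and elementary (a pure computation in the binomial coalgebra), while the paper's buys brevity by leaning on the external classification of low-dimensional connected Hopf algebras; your final worry about compatibility with the winding-automorphism identities is harmless, since replacing $x_2$ by $x_2-r$ only replaces $\delta_2$ by $\delta_2+[r_{\sigma_2},-]$ and is exactly the variable change already permitted in \S\ref{permit}.
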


\subsection{Classical IHOEs}
\label{classicIHOE}
Theorem \ref{IHOEintro} thus provides a large supply of connected
Hopf algebras. As we now recall, many, but not all, of the classically
familiar connected Hopf algebras are IHOEs. First, consider the case
when $H$ is an affine commutative Hopf $k$-algebra, $k$ as usual algebraically closed of characteristic 0. The arguments and references needed for the following are detailed in
Examples \ref{iterdefn}(i); the non-trivial part is due to Lazard \cite{Laz}.

\begin{thrm}
Let $H$ be the coordinate ring of the affine algebraic group $G$ over the algebraically closed field $k$ of characteristic 0.
Then the following are equivalent.
\begin{enumerate}
  \item $H$ is an IHOE;
  \item $H$ is a polynomial algebra over $k$;
  \item $H$ is a connected Hopf algebra;
  \item $G$ is unipotent.
\end{enumerate}
\end{thrm}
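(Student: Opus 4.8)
The plan is to establish the four equivalences through the implications (i)$\Rightarrow$(iii), (iii)$\Leftrightarrow$(iv), (iv)$\Rightarrow$(i), (iv)$\Rightarrow$(ii) and (ii)$\Rightarrow$(iv); throughout, $H=\cO(G)$ is commutative, which trivialises several steps. The implication (i)$\Rightarrow$(iii) is immediate from Theorem~\ref{IHOEintro}, since an IHOE over an algebraically closed field of characteristic $0$ is a connected Hopf algebra. I would also record the cheap observation underlying (i)$\Rightarrow$(ii): in a commutative Ore extension $R[x;\sigma,\delta]$ the relation $xr=rx$ forces $\sigma=\id$ and $\delta=0$, so every layer of the IHOE tower is an ordinary polynomial extension and $H\cong k[x_1,\dots,x_n]$ as an algebra. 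The real content is the passage between the group-theoretic condition (iv) and the algebraic ones, which I would handle through the dictionary between $\cO(G)$ and the representations of $G$, together with Lazard's theorem and its converse.

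For (iii)$\Leftrightarrow$(iv) I would invoke, for a commutative affine Hopf algebra, the correspondence between simple subcoalgebras of $\cO(G)$ and isomorphism classes of irreducible rational $G$-representations: the coradical is the sum of the coefficient coalgebras of the irreducibles, so $\cO(G)$ is connected precisely when the trivial representation is the only irreducible one. In characteristic $0$ this is equivalent to unipotence, since a unipotent group acts by unipotent operators and hence has only trivial irreducibles, while conversely the absence of nontrivial irreducibles forces the reductive quotient $G/R_u(G)$ to be trivial, i.e.\ $G=R_u(G)$.

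For (iv)$\Rightarrow$(i) I would build the IHOE tower from a central series. A connected unipotent group in characteristic $0$ has a chain of closed normal subgroups $G=G_0\supset G_1\supset\cdots\supset G_n=1$ with each $G_i/G_{i+1}\cong\mathbb{G}_a$ central in $G/G_{i+1}$; dualising the quotient maps yields a chain of Hopf subalgebras $k=\cO(G/G_0)\subset\cdots\subset\cO(G/G_n)=\cO(G)$. Each step is a central $\mathbb{G}_a$-extension, and since a $\mathbb{G}_a$-torsor over an affine base is trivial (the Zariski $H^1$ of the structure sheaf vanishes), the inclusion $\cO(G/G_i)\subset\cO(G/G_{i+1})$ is a polynomial extension in one variable $x_{i+1}$, i.e.\ an Ore extension with $\sigma=\id$, $\delta=0$. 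As each $\cO(G/G_i)$ is connected, Theorem~\ref{coprod} applies and gives $\Delta(x_{i+1})=1\otimes x_{i+1}+x_{i+1}\otimes 1+w$ with $w\in\cO(G/G_i)^{\otimes 2}$, so every layer is a HOE and $\cO(G)$ is an IHOE. The implication (iv)$\Rightarrow$(ii) is exactly Lazard's theorem \cite{Laz}: a unipotent group in characteristic $0$ is isomorphic as a variety to affine space, so $\cO(G)\cong k[t_1,\dots,t_n]$.

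The hard part will be the converse (ii)$\Rightarrow$(iv). I would first note that $\cO(G)\cong k[t_1,\dots,t_n]$ forces $\Hom(G,\mathbb{G}_m)=0$, since the units of a polynomial ring are only the scalars; via the Levi decomposition $G\cong R_u(G)\rtimes L$ this makes the Levi factor $L$ semisimple. The delicate point is ruling out a nontrivial semisimple $L$, because unit, subgroup and quotient considerations alone cannot distinguish $\mathbb{A}^n$ from a group such as $\mathrm{SL}_2$. I would close the argument with a topological obstruction: reducing to $k=\mathbb{C}$ by the Lefschetz principle, $G\cong\mathbb{A}^n$ makes $G(\mathbb{C})$ contractible, so its maximal compact subgroup is trivial, forcing $L$ trivial and $G$ unipotent. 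Formulating this converse cleanly, or locating it in the literature as the natural complement to Lazard's theorem, is the step I expect to demand the most care.
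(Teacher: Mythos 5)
Your implication structure is complete and your argument follows essentially the same skeleton as the paper's (Examples \ref{iterdefn}(i)): (i)$\Rightarrow$(ii) is the same trivial observation about commutative Ore extensions; (iii)$\Leftrightarrow$(iv) is the same dictionary between the coradical of $\mathcal{O}(G)$ and the irreducible rational representations of $G$; and (iv)$\Rightarrow$(i) is the same central-series construction (the paper cites \cite[Corollary 17.5]{Hu} for the embedding into strictly upper triangular matrices, and your torsor-triviality justification that each layer of the dual chain is a polynomial extension in one variable is a clean way of making that step explicit). The one place you genuinely diverge is (ii)$\Rightarrow$(iv). The paper disposes of this by citing Lazard \cite{Laz}: Lazard's theorem is precisely the statement that an algebraic group in characteristic $0$ whose underlying variety is an affine space is unipotent. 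You have the attribution reversed --- you assign Lazard to the easy direction (iv)$\Rightarrow$(ii) and then worry about ``locating the converse in the literature,'' but that converse \emph{is} what \cite{Laz} proves and is exactly what the paper invokes. Your substitute argument (the absence of nontrivial characters, from the units of a polynomial ring being scalars, forces the Levi factor to be semisimple; contractibility of $G(\mathbb{C})\cong\mathbb{C}^n$ forces the maximal compact subgroup, hence the semisimple Levi factor, to be trivial) is a correct outline of a proof of Lazard's statement in characteristic $0$, so there is no logical gap; it simply re-proves a result the paper cites. Note also that (iv)$\Rightarrow$(ii) needs no separate argument, since it follows from your (iv)$\Rightarrow$(i)$\Rightarrow$(ii).
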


In particular, notice that every commutative polynomial Hopf $k$-algebra
is an IHOE - this is the dual form of the fact that every unipotent
group $G$ over $k$ has a chain of normal subgroups of length $\dim G$
with each link in the chain isomorphic to the additive group of $k$. In
contrast to this, for \emph{cocommutative} Hopf $k$-algebras with the
appropriate finiteness condition, the connected ones are indeed
all ``noncommutative polynomial algebras'', but not all of them are
IHOEs. This is a straightforward consequence of the description of
cocommutative Hopf $k$-algebras in the theorem of
Cartier-Gabriel-Kostant, \cite[Theorem 5.6.5]{Mon}; the following
result follows from it and the discussion in Examples
\ref{iterdefn}(ii),(iii),(iv).

\begin{thrm}
Let $H$ be a cocommutative Hopf $k$-algebra.
\begin{enumerate}
  \item
$H$ is connected if and only if it is isomorphic as a Hopf algebra
to the enveloping algebra $U(\mathfrak{g})$ of a Lie $k$-algebra
$\mathfrak{g}$.
  \item
Let $H$ be as in {\rm{(i)}}. Then $H$ has finite GK-dimension if and
only if $\mathfrak{g}$ has finite dimension, and in this case
$\mathrm{GKdim} H = \mathrm{dim} \; \mathfrak{g}$.
  \item
Let $H =  U(\mathfrak{g})$ with $\mathrm{dim}\; \mathfrak{g}$ finite.
Then $H$ is an IHOE if and only if the only nonabelian simple factor
occurring in $\mathfrak{g}$ is $\mathfrak{sl}(2,k)$.
\end{enumerate}
\end{thrm}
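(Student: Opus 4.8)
The plan is to treat the three parts in turn, with essentially all the difficulty concentrated in part (iii). For (i), I would invoke the Cartier--Gabriel--Kostant theorem \cite[Theorem 5.6.5]{Mon}: over an algebraically closed field of characteristic $0$, any cocommutative Hopf algebra $H$ decomposes as a smash product $U(P(H)) \# k[G(H)]$, whose coradical is the group algebra $k[G(H)]$. Thus $H$ is connected (coradical $= k$) precisely when $G(H) = \{1\}$, i.e. when $H = U(\mathfrak{g})$ with $\mathfrak{g} = P(H)$; the converse is clear since $U(\mathfrak{g})$ is visibly connected and cocommutative. For (ii) I would use the PBW theorem: the standard filtration on $U(\mathfrak{g})$ has $\operatorname{gr} U(\mathfrak{g}) \cong S(\mathfrak{g})$. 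If $\dim\mathfrak{g} = d < \infty$ this is a polynomial ring in $d$ variables, so $\GKdim U(\mathfrak{g}) = d$; if $\dim\mathfrak{g} = \infty$ then $U(\mathfrak{g})$ contains enveloping algebras of finite-dimensional subalgebras of arbitrarily large dimension, forcing $\GKdim U(\mathfrak{g}) = \infty$.

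The key reduction for (iii) is that $U(\mathfrak{g})$ is an IHOE if and only if $\mathfrak{g}$ admits a complete flag of subalgebras
\[
0 = \mathfrak{g}_0 \subset \mathfrak{g}_1 \subset \cdots \subset \mathfrak{g}_n = \mathfrak{g}, \qquad \dim \mathfrak{g}_i = i,
\]
with each $\mathfrak{g}_{i-1}$ a (codimension-one) subalgebra of $\mathfrak{g}_i$. For the forward direction, every Hopf subalgebra $H_{(i)}$ of the connected cocommutative $U(\mathfrak{g})$ is again connected cocommutative, hence equals $U(\mathfrak{g}_i)$ with $\mathfrak{g}_i = P(H_{(i)})$ by (i); the inclusions $H_{(i)} \subset H_{(i+1)}$ give $\mathfrak{g}_{i-1} \subseteq \mathfrak{g}_i$, and since each Hopf Ore extension raises GK-dimension by one (Theorem \ref{IHOEintro}), part (ii) yields $\dim \mathfrak{g}_i = \GKdim H_{(i)} = i$, so the steps have codimension one. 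Conversely, given such a flag, PBW shows $U(\mathfrak{g}_i) = U(\mathfrak{g}_{i-1})[x_i; \sigma_i, \delta_i]$ is a skew polynomial extension, where $x_i \in \mathfrak{g}_i \setminus \mathfrak{g}_{i-1}$ is primitive and $\sigma_i$ is the winding automorphism attached to the linear functional $r \mapsto (x_i\text{-component of }[x_i,r])$ on $\mathfrak{g}_{i-1}$; a short Jacobi-identity computation shows this functional annihilates $[\mathfrak{g}_{i-1},\mathfrak{g}_{i-1}]$, hence is a genuine character, so $\sigma_i$ is well defined. As $x_i$ is primitive this is a Hopf Ore extension (Theorem \ref{coprod}), so $U(\mathfrak{g})$ is an IHOE.

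For the ``if'' direction, suppose every nonabelian simple composition factor of $\mathfrak{g}$ is $\mathfrak{sl}(2,k)$. Taking the Levi decomposition $\mathfrak{g} = \mathfrak{r} \rtimes \mathfrak{s}$ with $\mathfrak{s} = \mathfrak{s}_1 \oplus \cdots \oplus \mathfrak{s}_m$, each $\mathfrak{s}_j \cong \mathfrak{sl}(2,k) = \langle e_j, h_j, f_j\rangle$, I would build the flag in stages. First refine $\mathfrak{r}$: being solvable over an algebraically closed field it has a complete flag of ideals by Lie's theorem. Then adjoin the factors one at a time: having reached the ideal $\mathfrak{k}_{j-1} = \mathfrak{r} + \mathfrak{s}_1 + \cdots + \mathfrak{s}_{j-1}$, the chain $\mathfrak{k}_{j-1} \subset \mathfrak{k}_{j-1}+ke_j \subset \mathfrak{k}_{j-1}+ke_j+kh_j \subset \mathfrak{k}_{j-1}+\mathfrak{s}_j$ consists of subalgebras with codimension-one steps, using that $\mathfrak{k}_{j-1}$ is an ideal and that $\langle e_j\rangle \subset \langle e_j,h_j\rangle$ is a flag inside the Borel of $\mathfrak{sl}(2,k)$. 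This assembles the required complete flag, so $U(\mathfrak{g})$ is an IHOE by the previous paragraph.

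For the ``only if'' direction I would first observe that the existence of a complete flag of subalgebras passes to subquotients: intersecting the flag with an ideal $\mathfrak{i}$, or projecting it onto $\mathfrak{g}/\mathfrak{i}$, gives chains of subalgebras with codimension-$\le 1$ steps, which collapse to complete flags. Hence every composition factor of $\mathfrak{g}$ inherits a complete flag, so each nonabelian simple factor $\mathfrak{s}_0$ has a subalgebra of codimension one. The crux, and the step I expect to be the main obstacle, is the classical lemma that a finite-dimensional simple Lie algebra over an algebraically closed field of characteristic $0$ has a codimension-one subalgebra if and only if it is $\mathfrak{sl}(2,k)$. I would prove the nontrivial implication via the Killing form $\kappa$: a codimension-one subalgebra is $\mathfrak{h} = \ker\phi$ with $0 \ne \phi = \kappa(t,-)$, and the condition $\phi([\mathfrak{h},\mathfrak{h}])=0$ together with invariance of $\kappa$ forces $\operatorname{ad}(t)(t^{\perp}) \subseteq kt$. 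If $\kappa(t,t)\ne 0$ this makes $kt$ a one-dimensional ideal, contradicting simplicity; if $\kappa(t,t)=0$ it bounds $\operatorname{rank}\operatorname{ad}(t) = \dim[\mathfrak{s},t] \le 2$, which is impossible for simple $\mathfrak{s}\ne\mathfrak{sl}(2,k)$, since the minimal dimension of $[\mathfrak{s},t]$ over nonzero $t$ exceeds $2$ for every simple type other than $A_1$. Combined with the reduction above, this establishes (iii).
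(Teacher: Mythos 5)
Your proposal is correct and follows the same overall route as the paper: Cartier--Gabriel--Kostant for (i), the PBW filtration for (ii), and for (iii) the reduction of ``$U(\mathfrak{g})$ is an IHOE'' to ``$\mathfrak{g}$ admits a complete flag of Lie subalgebras''. The difference is one of detail rather than strategy: the paper disposes of (iii) by pointing to its Examples (ii)--(iv), where the solvable and $\mathfrak{sl}(2,k)$ cases are done explicitly and the negative direction is dismissed with the sentence that ``there are insufficient [Hopf subalgebras] to form a full flag''. You supply exactly what that sentence suppresses: the Jacobi-identity check that each codimension-one step really is a Hopf Ore extension via a winding automorphism, the Levi-decomposition assembly of the flag in the general (neither solvable nor semisimple) case, the passage of complete flags to subquotients, and a Killing-form proof that a simple Lie algebra with a codimension-one subalgebra must be $\mathfrak{sl}(2,k)$. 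All of these steps check out. The one place where you assert rather than prove is the final claim that $\dim[\mathfrak{s},t]>2$ for every nonzero $t$ in a simple $\mathfrak{s}\not\cong\mathfrak{sl}(2,k)$; this is true (for semisimple $t$ it is the statement that at least four roots are nonzero on $t$ when the rank is $\ge 2$, for nilpotent $t$ it follows from the minimal nilpotent orbit having dimension $2h^{\vee}-2>2$, and the general case reduces to these via the Jordan decomposition since $\mathfrak{z}(t)\subseteq\mathfrak{z}(t_s)\cap\mathfrak{z}(t_n)$), but it deserves either this argument or a reference, since it is the genuine mathematical content behind the paper's ``insufficient subalgebras'' remark.
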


\subsection{Properties of IHOEs}
\label{propintro}
In view of the theorems in (\ref{classicIHOE}) one expects IHOEs to share
many properties in common with coordinate rings of unipotent groups and
with enveloping algebras of solvable Lie algebras. Indeed this is the
case even for all connected Hopf $k$-algebras of finite GK-dimension:
in the central result of \cite{Zh1} it was shown that, for such an
algebra $A$, the associated graded algebra $\mathrm{gr}A$ with respect
to the coradical filtration is a commutative polynomial Hopf algebra
in $n := \mathrm{GKdim} A$ variables, and hence in particular
$\mathrm{gr} A$ is the coordinate ring of a unipotent group of
dimension $n$. Here we build on this result in Theorem \ref{firstprop};
selected parts of that result, additional to those already given in
Theorem \ref{IHOEintro}, are the following.

\begin{thrm} Let $H$ be an IHOE as in {\rm{(\ref{introdef1})}}.
\begin{enumerate}
\item
After a suitable change of variables {\rm{(}}with Theorem
{\rm{\ref{intro2}(i)}} still valid{\rm{)}}, the Lie algebra
$P(H)$ of primitive elements of $H$ is a subspace of $\sum_{i=1}^n kx_i$.
\item
$H$ is Auslander-regular and AS-regular of
dimension $n$, and is GK-Cohen-Macaulay.
\item
$H$ has Krull dimension at most $n$.
\item
$H$ is skew Calabi-Yau with Nakayama
automorphism $\nu$, where, for $i = 1, \ldots , n$,
$$ \nu (x_i) = x_i + a_i,$$
with $a_i \in H_{(i-1)}.$ If $x_i \in P(H)$ then $a_i \in k.$
\item
The antipode $S$ of $H$ either has infinite order, or $S^2 = \mathrm{Id}$.
\item
$S^4 = \tau^{\ell}_{\chi}\circ \tau^r_{-\chi}$, the composition of a
left winding automorphism of $H$ with the right winding automorphism
of the inverse character, with the character $\chi$ in the centre of
the character group $X(H)$. In particular, $S^4$ is a unipotent
automorphism of a generating finite dimensional subcoalgebra of $H$.
\end{enumerate}
\end{thrm}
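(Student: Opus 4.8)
The plan is to induct up the chain \eqref{introdef1}, each link being an Ore extension $H_{(i)}\subset H_{(i)}[x_{i+1};\sigma_{i+1},\delta_{i+1}]$ with $\sigma_{i+1}$ an automorphism, while reading the global structural facts off the associated graded ring $\gr H$ for the coradical filtration, which by \cite{Zh1} is the commutative polynomial Hopf algebra $k[y_1,\dots,y_n]$ on its primitives. For (iii) I would use the standard bound $\Kdim R[x;\sigma,\delta]\le\Kdim R+1$ for right noetherian $R$ and an automorphism $\sigma$, climbing from $\Kdim k=0$ to $\Kdim H\le n$. For (ii), the ring $\gr H$, being a commutative polynomial ring in $n$ variables, is Auslander-regular and AS-regular of dimension $n$ and GK-Cohen-Macaulay; each of these three properties transfers from the associated graded to $H$ through the connected coradical filtration (equivalently, Auslander-regularity and GK-Cohen-Macaulayness are preserved at each Ore step, raising global and injective dimension by exactly one), which yields (ii) with global dimension $n$.

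For (i) I would again pass to $\gr H=k[y_1,\dots,y_n]$. A primitive element of $H$ has primitive leading symbol, and the primitives of $\gr H$ are exactly $\sum_i ky_i$; subtracting the matching combination of the $x_i$ and descending the filtration reduces an arbitrary primitive of $H$ into $\sum_i kx_i$, the successive corrections being absorbed by the changes of variable permitted in Theorem \ref{IHOEintro}.

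Parts (iv)--(vi) all flow from the filtered behaviour of the antipode. From Theorem \ref{intro2}(b) with $a=1$ (valid for an IHOE by Theorem \ref{IHOEintro}) we have $S(x_i)\equiv -x_i \pmod{H_{(i-1)}}$, so $\gr S$ is $-\,\mathrm{id}$ on the degree-one generators, $\gr(S^2)=\mathrm{id}$, and $S^2$ is a filtered \emph{unipotent} automorphism; since a unipotent automorphism of finite order is the identity in characteristic $0$, this is exactly (v). For (iv), AS-regularity together with the Hopf structure forces $H$ to be skew Calabi-Yau, and the Brown--Zhang description of the Nakayama automorphism of a noetherian AS-Gorenstein Hopf algebra gives $\nu=S^2\circ\tau^{\ell}_{\chi}$ for the homological integral character $\chi$. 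Because $H_{(i-1)}$ is an $S$-stable Hopf subalgebra and $\Delta(x_i)-1\otimes x_i-x_i\otimes1=w^{i-1}\in H_{(i-1)}\otimes H_{(i-1)}$, both $S^2$ and $\tau^{\ell}_{\chi}$ fix $x_i$ modulo $H_{(i-1)}$, giving $\nu(x_i)=x_i+a_i$ with $a_i\in H_{(i-1)}$; when $x_i\in P(H)$ the term $w^{i-1}$ drops out and the formulas collapse to $\nu(x_i)=x_i+\chi(x_i)\in x_i+k$. Finally (vi) is the Brown--Zhang form of Radford's $S^4$-formula: in a connected Hopf algebra the only group-like element is $1$, so the distinguished group-like is trivial, the conjugation term vanishes, and $S^4=\tau^{\ell}_{\chi}\circ\tau^{r}_{-\chi}$; restricting to a finite-dimensional generating subcoalgebra, available since $H$ is connected, each winding automorphism is the identity on the associated graded of the coradical filtration, so $S^4$ is unipotent there.

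I expect the two genuine obstacles to be, first, organising the change of variables in (i) so that it respects primitivity and the chain \eqref{introdef1} simultaneously; and second, the winding-character bookkeeping in (iv) and (vi)---matching the abstract homological and modular characters to the concrete data $\{\sigma_j,\delta_j,w^{i-1}\}$, and in particular verifying that $\chi$ lies in the \emph{centre} of $X(H)$, which fails for general noetherian AS-Gorenstein Hopf algebras and so must exploit the connected IHOE structure.
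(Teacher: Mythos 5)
Most of your outline for parts (ii)--(vi) tracks the paper's proof of Theorem \ref{firstprop}: (ii) and (iii) are read off from $\gr H$ being a polynomial ring via \cite[Corollary 6.10]{Zh1}, (iv) is exactly the Brown--Zhang formula $\nu = S^2\tau$ combined with the shape of $\Delta(x_i)$ and $S(x_i)$, and your unipotence reading of (v) is a legitimate variant of the paper's direct computation $S^{2m}(x)=x+ma$ (to make it rigorous you should take the associated graded with respect to the \emph{coradical} filtration, whose pieces $H_m$ are finite dimensional and on which $\gr(S)^2=\mathrm{id}$ because $\gr H$ is commutative; a filtration by degree in the $x_i$ does not obviously work, since $S^2(x_i)-x_i$ can have large degree in $x_1,\dots,x_{i-1}$).

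The genuine gap is in part (i). Your claim that ``the primitives of $\gr H$ are exactly $\sum_i ky_i$'' is false: $P(\gr H)$ is the degree-one component $H_1/H_0$ of the coradically graded algebra $\gr H$, whereas the polynomial generators $y_i$ of Zhuang's theorem live in various degrees (for the Heisenberg example $\dim P(H)=2$ while $n=3$). More seriously, saying that a primitive $h$ has primitive leading symbol only records the tautology $h\in H_1$; it gives no relation between $H_1$ and the Ore generators $x_i$, because the coradical filtration and the $x$-degree filtration are a priori unrelated, so there is no ``matching combination of the $x_i$'' to subtract. The actual content of (i) is the assertion that any $h\in H_1\setminus H_{(n-1)}$ has degree exactly $1$ in $x_n$ with \emph{scalar} leading coefficient. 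The paper proves this by induction up the chain: writing $h=\sum_{i\le m}t_ix_n^i$ with $t_m\in T_\ell$, it expands $\Delta(h)$ using $\Delta(x_n)=x_n\otimes 1+1\otimes x_n+\eta$ and $\Delta(t_m)=t_m\otimes 1+1\otimes t_m+\gamma$, and compares with $\Delta(h)=h\otimes1+1\otimes h$: the cross term $x_n^m\otimes t_m$ forces $t_m\in k$, and the term $mx_n^{m-1}\otimes x_n$ (nonzero since $\operatorname{char}k=0$) forces $m=1$; then $h=x_n+t$ and one replaces $x_n$ by $h$, adjusting $\delta_n$ by the inner derivation attached to $t$. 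None of this is recoverable from $\gr H$ alone. Separately, in (vi) you correctly flag but do not supply the centrality of $\chi$; the paper's argument is short (the Hopf ideal $I(H)$ is $S$-stable, $S^4$ induces the fourth power of the antipode on the commutative quotient $H/I(H)\cong\mathcal{O}(X(H))$, hence the identity, so left and right translation by $\chi$ agree) and you should include it.
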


Moreover, in Theorem \ref{PI}, partially generalising a well-known
property of enveloping algebras in characteristic 0, we show that

\begin{thrm}
An IHOE  $H$ as in {\rm{(\ref{introdef1})}} which satisfies a
polynomial identity is commutative.
\end{thrm}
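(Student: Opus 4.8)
The plan is to argue by induction on the length $n$ of the chain \eqref{introdef1}. For $n \le 1$ the algebra $H$ equals $k$ or $k[x_1]$, which is commutative, so the base case is clear. For the inductive step, observe that $H_{(n-1)}$ is itself an IHOE of length $n-1$, and that it inherits the polynomial identity of $H$ since subalgebras of PI algebras are PI. By the inductive hypothesis $R := H_{(n-1)}$ is therefore commutative, and the problem reduces to showing that a Hopf Ore extension $H = R[x;\sigma,\delta]$ with commutative base $R$ which is PI must be commutative; equivalently, that $\sigma = \mathrm{id}$ and $\delta = 0$.

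First I would record two structural facts. Since $H$ is an iterated Ore extension of $k$ by automorphisms, it is a noetherian domain (consistent with Theorem~\ref{IHOEintro}), so by Goldie's theorem it has a quotient division ring $Q = \mathrm{Fract}(H)$; if $H$ is PI then Posner's theorem shows $Q$ is finite-dimensional over its centre $Z(Q)$. Secondly, by Theorem~\ref{intro2}(i)(c) (with $a = 1$ in the connected case) $\sigma$ is the left winding automorphism $\tau^{\ell}_{\chi}$ of the connected Hopf algebra $R$ for some character $\chi$. Using that $\overline{\Delta}(r) := \Delta(r) - 1\otimes r - r\otimes 1$ lies in $R_{i-1}\otimes R_{i-1}$ for $r$ in the $i$-th term $R_i$ of the coradical filtration, a short computation gives $(\tau^{\ell}_{\chi} - \mathrm{id})(R_i)\subseteq R_{i-1}$, so $\sigma$ is locally unipotent. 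As the ground field has characteristic $0$, a nontrivial locally unipotent automorphism has infinite order; hence it suffices to show that $\sigma$ has finite order.

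To force $\sigma = \mathrm{id}$, I would pass to the associated graded ring of $H$ for the filtration by degree in $x$: this is the skew polynomial ring $R[\bar{x};\sigma]$, and it inherits any multilinear identity of $H$, hence is PI. Localising at $R\setminus\{0\}$ yields $K[\bar{x};\sigma]$, where $K = \mathrm{Fract}(R)$, still PI. If $\sigma$ had infinite order then linear independence of automorphisms (Artin) gives $[K:K^{\sigma}] = \infty$, while $K^{\sigma} = K \cap Z(\mathrm{Fract}(K[\bar{x};\sigma]))$; this would make the quotient division ring of $K[\bar{x};\sigma]$ infinite-dimensional over its centre, contradicting Posner's theorem. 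Thus $\sigma$ has finite order, and by local unipotence $\sigma = \mathrm{id}$.

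It remains to treat $H = R[x;\delta]$ and show $\delta = 0$. With $\sigma = \mathrm{id}$ the set $R\setminus\{0\}$ is an Ore set, and the localisation $K[x;\delta]$ embeds in $Q$, so it is PI. If $\delta \neq 0$, choose $s\in R$ with $\delta(s)\neq 0$; scaling by $\delta(s)^{-1}\in K$ produces $t\in K$ with $\delta(t) = 1$, and since $\delta$ is a nonzero derivation of a field of characteristic $0$ the element $t$ is transcendental over $\ker\delta \supseteq k$. Then $t$ and $x$ generate a copy of the first Weyl algebra $A_1(k)$ inside $K[x;\delta]$, which is not PI --- a contradiction. Hence $\delta = 0$ and $H = R[x]$ is commutative, completing the induction. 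The main obstacle is the reduction to a commutative base, after which the two sources of non-commutativity, the automorphism $\sigma$ and the derivation $\delta$, must be ruled out separately: the first by combining local unipotence of winding automorphisms with the failure of PI for skew polynomial rings over a field when $\sigma$ has infinite order, and the second by exhibiting a Weyl subalgebra.
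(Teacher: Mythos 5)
Your overall strategy is sound and genuinely different in its key steps from the paper's. The paper also inducts down to a Hopf Ore extension $T=R[x;\sigma,\delta]$ over a commutative polynomial base, but it then quotes Jondrup \cite{Jo} to get $\mathrm{PIdeg}(T)=\mathrm{PIdeg}(R[x;\sigma])$ and Damiano--Shapiro \cite{DS} to force $\sigma$ to have finite order, after which the unipotence (torsion-freeness) of the character group kills $\sigma$, and commutativity of $H$ follows because its PI degree equals that of $R[x;\sigma]=R[x]$ --- so $\delta$ never has to be treated separately. You replace both citations with hands-on arguments: the associated graded ring plus Posner for the $\sigma$ part (your local-unipotence computation $(\tau^{\ell}_{\chi}-\mathrm{id})(R_i)\subseteq R_{i-1}$ is correct and is a nice direct substitute for the paper's appeal to torsion-freeness of $X(R)$), and a Weyl subalgebra for the $\delta$ part. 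This is more self-contained, at the cost of two slips in execution.

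First, the inference ``$[K:K^{\sigma}]=\infty$ and $K^{\sigma}=K\cap Z(D)$, hence $D:=\mathrm{Fract}(K[\bar{x};\sigma])$ is infinite over its centre'' does not follow as stated: for subfields one only has $[KF:F]\le [K:K\cap F]$, not the reverse, so $[K:K\cap Z(D)]=\infty$ is a priori compatible with $[D:Z(D)]<\infty$. Repair it either by computing directly that $Z(K[\bar{x};\sigma])=K^{\sigma}$ when $\sigma$ has infinite order and invoking the central-localisation form of Posner's theorem to get $Z(D)=K^{\sigma}$, or by noting that conjugation by $\bar{x}$ extends $\sigma$ to the compositum $K\cdot Z(D)$ fixing $Z(D)$ pointwise, so Artin bounds the order of $\sigma$ by $[K\cdot Z(D):Z(D)]\le [D:Z(D)]^{1/2}$. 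Second, scaling does not produce $t$ with $\delta(t)=1$: for $c=\delta(s)^{-1}$ one has $\delta(cs)=\delta(c)s+1\ne 1$ in general, since $\delta$ is not $K$-linear. Instead put $u:=\delta(s)^{-1}x\in K[x;\delta]$; then $[u,s]=1$, and since $A_1(k)$ is simple in characteristic $0$, the subalgebra generated by $u$ and $s$ is isomorphic to $A_1(k)$, which is not PI. With these two local repairs your argument is complete.
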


\subsection{The maximal classical subgroup of an IHOE}
\label{maxclassic}
As is well-known, for any Hopf algebra $H$ the ideal
$I(H) := \langle [H,H]\rangle$ of $H$ generated by the commutators is
a Hopf ideal. Suppose that $H$ is an affine $k$-algebra with $k$
algebraically closed of characteristic 0. Then $H/I(H)$ is the coordinate
ring of an algebraic group over $k$. It is natural to call this group
\emph{the maximal classical subgroup of} $H$; of course it is nothing
else but the character group $X(H)$ of all algebra homomorphisms from
$H$ to $k$. That is,
$$ H/I(H) \cong \mathcal{O}(X(H)).$$
Suppose now that $H$ is an IHOE with chain (\ref{introdef1})
(although the following remarks are valid more generally for \emph{any}
connected Hopf $k$-algebra of finite GK-dimension $n$). Since Hopf
algebra factors of connected Hopf algebras are again connected, by
\cite[Corollary 5.3.5]{Mon}, it follows from Theorem \ref{IHOEintro}
that $H/I(H)$ is connected. Hence, by the first theorem in
$\S$\ref{classicIHOE}, $H/I(H)$ is a polynomial algebra in at most
$n$ variables; equivalently, $X(H)$ is a unipotent group of dimension
at most $n$.

\subsection{Generalisation of part of a theorem of Goodearl}
\label{geegee}
To study $X(H)$ when $H$ is an IHOE, in $\S$4 (most of which is
independent of the rest of the paper), we prove the following result,
which generalises a special case of a result of Goodearl \cite{Go}
on the prime spectrum of $R[x;\sigma,\delta]$ when $R$ is commutative.
Note that the result below concerns an arbitrary skew polynomial
$F$-algebra, not necessarily a Hopf algebra, and the algebraically
closed field $F$ may have any characteristic.

\begin{thrm}
Let $F$ be algebraically closed and let $R$ be a $F$-algebra. Let
$\sigma$ and $\delta$ be respectively an $F$-algebra automorphism
and a $\sigma$-derivation of $R$, and set $T = R[x; \sigma, \delta]$.
Let $\Xi (T)$ denote the set of ideals $M$ of $T$ with $T/M \cong F$,
and similarly for $\Xi (R)$. Write
$\Psi: \Xi (T) \longrightarrow \Xi (R): M \mapsto M \cap R.$
\begin{enumerate}
\item
Let $M \in \Xi(T)$ and denote $\Psi(M)$ by $\mathfrak{m}.$ Then
{\rm{(a)}} $\delta( [R,R]) \subseteq \mathfrak{m}$, and
either {\rm{(b)}} $\mathfrak{m}$ is $(\sigma, \delta)$-invariant, or
{\rm{(c)}} $\mathfrak{m}$ is not $\sigma$-invariant.
\item
Let $\mathfrak{m} \in \Xi (R)$, and suppose that {\rm{(b)}}
{\rm{(}}and hence {\rm{(a))}} hold for $\mathfrak{m}$. Then
$\mathfrak{m}T \lhd T$ and $T/\mathfrak{m}T \cong F[x]$, so that
$$ \Psi^{-1} (\mathfrak{m}) = \{ \langle \mathfrak{m}T,
x - \lambda \rangle : \lambda \in F \} \cong \mathbb{A}^{1}_F.$$
\item
Let $\mathfrak{m} \in \Xi (R)$, and suppose that {\rm{(a)}} and
{\rm{(c)}} hold for $\mathfrak{m}$. Then there exists a unique
$M \in \Xi (T)$ with $\Psi (M) = \mathfrak{m}.$
\end{enumerate}
\end{thrm}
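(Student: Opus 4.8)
The plan is to analyze the structure theory of the skew polynomial algebra $T = R[x;\sigma,\delta]$ by reducing modulo the ideal $\mathfrak{m}T$ and then by passing to the localization/extension of scalars, following the philosophy of Goodearl's work on prime spectra. Throughout, recall that elements $M \in \Xi(T)$ correspond precisely to $F$-algebra homomorphisms $T \to F$, equivalently characters, and likewise for $R$; so $\Psi$ simply restricts a character on $T$ to $R$. The guiding distinction is between the case where $\mathfrak{m}$ is $(\sigma,\delta)$-invariant (case (b)), in which $T/\mathfrak{m}T$ is an honest commutative polynomial ring, versus the case where $\mathfrak{m}$ fails to be $\sigma$-invariant (case (c)), where the fibre over $\mathfrak{m}$ collapses to a single point.

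\medskip

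For part (i), I would start by observing that since $T/M \cong F$, the map $T \to F$ is an algebra homomorphism, so $[T,T] \subseteq M$. To establish (a), I would compute the commutator $[r,x] = rx - xr = (\sigma(r)-r)x + \delta(r)$ for $r \in R$ using the defining relation $xr = \sigma(r)x + \delta(r)$ of the Ore extension. More usefully, I would compute $[r,s]$ for $r,s\in R$ and $[rs, x]$ to extract relations; the key identity is that for $r,s \in R$ we have $x[r,s] - [r,s]x$ lying in $M$, and by expanding $[rx, s]$ and $[xr,s]$ one sees that $\delta([R,R])$ must be trapped in $\mathfrak{m} = M\cap R$. For the dichotomy (b) or (c): if $\mathfrak{m}$ \emph{is} $\sigma$-invariant, I must show it is then automatically $(\sigma,\delta)$-invariant, i.e. $\delta(\mathfrak{m}) \subseteq \mathfrak{m}$. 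This follows because $M$ is an ideal of $T$ containing $\mathfrak{m}$, and for $m \in \mathfrak{m}$ the element $xm = \sigma(m)x + \delta(m) \in M$; since $\sigma(m)x \in \mathfrak{m}T \subseteq M$ when $\mathfrak{m}$ is $\sigma$-invariant, we get $\delta(m) \in M \cap R = \mathfrak{m}$. Thus the only two possibilities are exactly (b) and (c).

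\medskip

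For part (ii), assuming $\mathfrak{m}$ is $(\sigma,\delta)$-invariant, the automorphism $\sigma$ and the derivation $\delta$ descend to the quotient $\bar{R} := R/\mathfrak{m} \cong F$. Since $F$ is the base field, the induced $\bar\sigma$ is the identity and the induced $\bar\delta$ is zero (any $F$-algebra endomorphism and $F$-linear derivation of $F$ is trivial). Therefore $T/\mathfrak{m}T \cong \bar{R}[x;\bar\sigma,\bar\delta] = F[x]$, an ordinary polynomial ring, and I would verify $\mathfrak{m}T$ is a two-sided ideal by checking $x\mathfrak{m} \subseteq \mathfrak{m}T$ and $\mathfrak{m}x \subseteq \mathfrak{m}T$ using invariance. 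The maximal ideals $M$ of $T$ with $T/M \cong F$ lying over $\mathfrak{m}$ then correspond bijectively to the points of $\Spec F[x]$ with residue field $F$, namely $\{x - \lambda : \lambda \in F\}$ since $F$ is algebraically closed; this gives the identification with $\mathbb{A}^1_F$.

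\medskip

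Part (iii) is where I expect the main obstacle. Here $\mathfrak{m}$ is not $\sigma$-invariant, so $\sigma(\mathfrak{m}) \neq \mathfrak{m}$, yet (a) holds. The strategy is to show the fibre $\Psi^{-1}(\mathfrak{m})$ is a singleton. Existence: I would look for an $F$-point of $T$ extending the character $R \to R/\mathfrak{m} \cong F$; the value $\lambda$ that $x$ must take is forced, because compatibility with the relation $xr - \sigma(r)x - \delta(r) = 0$ evaluated through any character $\phi$ with $\phi|_R$ inducing $\mathfrak{m}$ forces $\lambda(\phi(r) - \phi(\sigma(r))) = \phi(\delta(r))$ for all $r$. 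Since $\mathfrak{m}$ is not $\sigma$-invariant, there exists $r_0$ with $\phi(r_0) \neq \phi(\sigma(r_0))$, which pins down $\lambda = \phi(\delta(r_0))/(\phi(r_0)-\phi(\sigma(r_0)))$ uniquely; this simultaneously gives uniqueness. The genuinely delicate point is checking that this forced value of $\lambda$ is \emph{consistent} across all choices of $r$, i.e. that the overdetermined system of equations $\lambda(\phi(r)-\phi(\sigma(r))) = \phi(\delta(r))$ for all $r \in R$ is solvable; this is exactly where hypothesis (a), namely $\delta([R,R]) \subseteq \mathfrak{m}$, combined with the commutator identities from part (i), must be used to guarantee the relations are compatible and no contradiction arises. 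I would verify consistency by showing the map $r \mapsto \phi(\delta(r)) - \lambda(\phi(r) - \phi(\sigma(r)))$ is identically zero on $R$ using that it is a well-defined function factoring through the character, leaning on (a) to handle multiplicativity.
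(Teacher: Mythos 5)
Parts (i) and (ii) of your proposal are sound and essentially match the paper: for (i) you use $[T,T]\subseteq M$ together with $x[r,s]=\sigma([r,s])x+\delta([r,s])$ and the fact that $\sigma([r,s])\in[R,R]\subseteq\mathfrak m\subseteq M$ to trap $\delta([r,s])$ in $M\cap R$, and your derivation of the dichotomy and of (ii) is the standard one. For (iii) you take a genuinely different route from the paper: the paper first shows that (a) and (c) force $\delta(\mathfrak m\cap\sigma^{-1}(\mathfrak m))\subseteq\mathfrak m$ (via the identity $\mathfrak a\cap\mathfrak b=\langle[R,R]\rangle+\mathfrak a\mathfrak b$ for distinct character ideals) and then explicitly constructs the ideal $M=(x+\lambda)F[x]+\mathfrak m T$ and checks by hand that it is two-sided; you instead try to build the character $\Phi:T\to F$ directly via the universal property of the Ore extension, which, if completed, is arguably cleaner.

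However, the heart of (iii) in your approach --- the consistency of the forced value $\lambda$ across all of $R$ --- is asserted rather than proved, and the justification you offer (``it is a well-defined function factoring through the character'') is not an argument: the map $\theta(r):=\phi(\delta(r))-\lambda\bigl(\phi(r)-\phi(\sigma(r))\bigr)$ does \emph{not} a priori factor through $\phi$, precisely because $\delta$ need not preserve $\mathfrak m$ in the non-invariant case; that it vanishes is exactly what must be shown. The missing idea is the following. A direct computation with $\delta(rs)=\delta(r)s+\sigma(r)\delta(s)$ shows that $\theta$ is a twisted derivation, $\theta(rs)=\theta(r)\phi(s)+\phi(\sigma(r))\theta(s)$. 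Choosing $r_0$ with $\phi(r_0)\neq\phi(\sigma(r_0))$ and $\lambda$ so that $\theta(r_0)=0$, one gets for every $r\in R$
$$\theta([r,r_0])=\theta(rr_0)-\theta(r_0r)=\theta(r)\bigl(\phi(r_0)-\phi(\sigma(r_0))\bigr).$$
On the other hand $\phi([r,r_0])=\phi(\sigma([r,r_0]))=0$ since $F$ is commutative and $\sigma([r,r_0])\in[R,R]$, so $\theta([r,r_0])=\phi(\delta([r,r_0]))$, which vanishes by hypothesis (a). Hence $\theta(r)=0$ for all $r$, the universal property yields the character $\Phi$ with $\Phi(x)=\lambda$, and uniqueness follows as you say. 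Without some such argument pinning down exactly how (a) enters, part (iii) of your proof is incomplete.
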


\subsection{Applications of Theorem \ref{geegee} to IHOEs}
\label{app}
There are several reasons why $X(H)$ is a worthwhile object of
study for an affine Hopf algebra $H$:

$\indent \bullet$ it is an important invariant of $H$;

$\indent \bullet$ it yields (via the left and right winding
automorphisms) a supply of algebra automorphisms of $H$;

$\indent \bullet$ it is a starting point for the representation
theory of $H$;

$\indent\bullet$ the behaviour of $X(H)$ under restriction and
induction has structural consequences for $H$.

In this paper we focus on the last of these points. To explain what
we mean by it, let $T = R[x; \sigma,\delta]$ be a HOE; as in
Theorem \ref{geegee}, let $\Psi$ be the restriction map, which we
view as a homomorphism of groups from $X(T)$ to $X(R)$. Let
$\mathfrak{m} \in \mathrm{im}\Psi$, a closed subgroup of $X(R)$.
Theorem \ref{geegee} tells us that $\Psi^{-1}(\mathfrak{m})$ is
either a single element of $X(T)$ or is a copy of $(k,+)$. If
$\tau$ is any non-identity winding automorphism of $T$ then
$\tau$ permutes the characters of $T$ transitively, with $\tau(R)=R$
since $R$ is a Hopf subalgebra of $T$, we see that the fibre
$\Psi^{-1}(\mathfrak{m})$ takes the same form for \emph{every}
$\mathfrak{m}$ in $\mathrm{im}\Psi$. In the first case we say
that the HOE is of \emph{variant type}, and in the second we call
it \emph{invariant type}. Note that the extension is of invariant
type if and only if $R^+T$ is an ideal of $T$, where $R^+$ is the
augmentation ideal of $R$. This is all contained in $\S$4. In $\S$5,
we take the first steps in exploring this dichotomy: the following
result summarises parts of Theorems \ref{invHOEs} and \ref{varHOEs}.

\begin{thrm}
Let $k$ be an algebraically closed field of characteristic 0, let
$R$ be a Hopf $k$-algebra and let $T =R[x; \sigma, \delta]$ be a HOE.
\begin{enumerate}
\item
If $T$ is an invariant HOE of $R$ then there is a change of variables
from $x$ to $\tilde{x}$ so that $T = R[\tilde{x}; \partial]$, where
$\partial$ is a derivation of $R$.
\item
If $R$ is an affine commutative domain and $T$ is a variant HOE,
then there is a change of variables from $x$ to $\tilde{x}$ such
that $\tilde{x}$ is skew primitive and $T = R[\tilde{x}; \sigma]$.
\end{enumerate}
\end{thrm}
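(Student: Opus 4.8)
The plan is to reduce the whole dichotomy to the single datum $\sigma$. First I would observe that $\varepsilon\circ\delta=0$ holds automatically: applying the algebra map $\varepsilon$ to $xr=\sigma(r)x+\delta(r)$ and using $\varepsilon(x)=0$ (from Theorem \ref{intro2}) gives $\varepsilon(\delta(r))=0$, so $\delta(R)\subseteq R^+$ with no extra hypothesis. Writing $T=\bigoplus_{i\ge 0}Rx^i$, one has $R^+T=\bigoplus_{i\ge 0}R^+x^i$, and this is a two-sided ideal exactly when $xR^+\subseteq R^+T$; since the relation gives $xr=\sigma(r)x+\delta(r)$ with $\delta(r)\in R^+$, the sole requirement is $\sigma(R^+)\subseteq R^+$. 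By Theorem \ref{intro2}(i)(c) we may take $\sigma=\tau^{\ell}_{\chi}$, and as $\varepsilon\circ\tau^{\ell}_{\chi}=\chi$ this is equivalent to $\chi=\varepsilon$, i.e. to $\sigma=\mathrm{id}$. Hence $T$ is of invariant type iff $\sigma=\mathrm{id}$ and of variant type iff $\sigma\neq\mathrm{id}$.

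Part (i) is then immediate. With $\sigma=\mathrm{id}$, the defining relation reads $xr=rx+\delta(r)$, so $\delta$ obeys the ordinary Leibniz rule and is a derivation $\partial$; taking $\tilde{x}$ to be the variable furnished by Theorem \ref{intro2} gives $T=R[\tilde{x};\partial]$.

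For part (ii) I assume $\sigma\neq\mathrm{id}$ and seek $d\in R$ with $\delta(r)=(\sigma(r)-r)d$ for all $r$; then $\tilde{x}=x+d$ satisfies $\tilde{x}r=\sigma(r)\tilde{x}$, so $T=R[\tilde{x};\sigma]$. I would first locate $d$ in $\Fract(R)$: for $r_0,r_1$ with $\sigma(r_i)\neq r_i$, expanding $\delta(r_0r_1)=\delta(r_1r_0)$ via the $\sigma$-Leibniz rule and using commutativity yields $\delta(r_1)(\sigma(r_0)-r_0)=\delta(r_0)(\sigma(r_1)-r_1)$, so $d:=\delta(r_0)/(\sigma(r_0)-r_0)$ is independent of the chosen $r_0$; the same identity forces $\delta(r)=0$ when $\sigma(r)=r$, so $\delta(r)=(\sigma(r)-r)d$ for every $r\in R$.

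The hard part will be showing $d\in R$ and not merely in $\Fract(R)$. For this I would pass to geometry: $R=\mathcal{O}(G)$ for a connected algebraic group $G$, and $\sigma=\tau^{\ell}_{\chi}$ is the comorphism of translation by the non-identity point of $G$ determined by $\chi$. A nontrivial translation of a group has no fixed points, so the common vanishing locus of $\{\sigma(r)-r:r\in R\}$ is empty; by the Nullstellensatz these elements generate the unit ideal, and writing $1=\sum_i r_i'(\sigma(r_i)-r_i)$ gives $d=\sum_i r_i'\,\delta(r_i)\in R$. Setting $\tilde{x}=x+d$ then yields $T=R[\tilde{x};\sigma]$. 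Finally I must check that $\tilde{x}$ is skew primitive: writing $\Delta(x)=a\otimes x+x\otimes 1+w$ from Theorem \ref{intro2}, one has $\Delta(\tilde{x})=a\otimes\tilde{x}+\tilde{x}\otimes 1+w'$ with $w'=w-\bigl(a\otimes d+d\otimes 1-\Delta(d)\bigr)$, and I expect the identity \eqref{delta} of Theorem \ref{permit}, once $\delta(r)=(\sigma(r)-r)d$ is substituted, to identify $w$ with the coboundary $a\otimes d+d\otimes 1-\Delta(d)$, forcing $w'=0$; alternatively, applying $\Delta$ to $\tilde{x}r=\sigma(r)\tilde{x}$ and comparing components in the domain $R\otimes R$ should force $w'=0$. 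Pinning down exactly which identity does this, and confirming that the single element $d$ simultaneously kills $\delta$ and $w$, is the one genuinely delicate point; the remainder is bookkeeping.
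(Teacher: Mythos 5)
Your route is genuinely different from the paper's, and parts of it are attractive, but there is a gap in each half.

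For (i): your reduction to ``invariant iff $\sigma=\mathrm{id}$'' rests on $\sigma$ being a left winding automorphism, i.e.\ on Theorem \ref{permit}(i)(d), which is only available under hypothesis (H) --- in practice when $R$ (or $R\otimes R$) is a domain, by Lemma \ref{anti}(i). Part (i) of the statement assumes only that $R$ is a Hopf $k$-algebra. The paper's proof of Theorem \ref{invHOEs} deliberately avoids winding automorphisms: it uses freeness of $T$ over $R$ and Takeuchi's theorem \cite{Ta} to identify $R$ with the coinvariants of the projection $T\to T/R^+T\cong\mathcal{O}((k,+))$, and then \cite[Theorem 8.3]{GZ} to produce the derivation $\partial$. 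What you prove is exactly the paper's own Remark (1) following Theorem \ref{invHOEs} (the case $R\otimes R$ a domain), not the theorem in the stated generality.

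For (ii): the construction of $d\in\Fract(R)$ from $\delta(r_0r_1)=\delta(r_1r_0)$, and the descent to $d\in R$ via fixed-point-freeness of a nontrivial translation plus the Nullstellensatz, are correct and give a slicker route to $T=R[\tilde x;\sigma]$ than the paper's Lemma \ref{varHOEs}(ii), which proves $R+I(T)=T$ by a localization argument and extracts $d$ from $x-d\in I(T)$. (Since $R$ is a domain and $\sigma\neq\mathrm{id}$, such a $d$ is unique, so your $\tilde x$ coincides with the paper's.) But the step you flag as delicate is a genuine gap, and your expectation of how it closes is not right: substituting $\delta(r)=(\sigma(r)-r)d$ into \eqref{delta} --- equivalently, applying $\Delta$ to $\tilde xr=\sigma(r)\tilde x$ and extracting the degree-$(0,0)$ component --- yields only $w'\Delta(r)=\Delta(\sigma(r))\,w'$ for all $r\in R$, which does not by itself force $w'=0$. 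It can be completed as follows: for any characters $h,h'$ of $R$ this identity gives $(h\otimes h')(w')\bigl((h*h')(r)-(\chi*h*h')(r)\bigr)=0$, and since $\chi\neq\varepsilon$ one may choose $r$ making the second factor nonzero; hence $w'$ vanishes at every point of $G\times G$, and $R\otimes R$ being reduced forces $w'=0$. The paper instead deduces $w=0$ from $\tilde x\in I(T)$, $I(T)\cap R=0$ and the invariance of $I(T)$ under right winding automorphisms --- information your construction of $d$ does not supply, which is why that argument cannot be borrowed directly.
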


\subsection{Notation} \label{notation}
Throughout, $F$ is an arbitrary field, and $k$ is an algebraically
closed field of characteristic $0$. Given a Hopf algebra $H$, whether
over $F$ or over $k$, we shall use the standard symbols
$\Delta, S, \varepsilon$ for the coproduct, antipode and counit of
$H$, and, for $h \in H$, write $\Delta (h) = h_1 \otimes h_2$. We
denote the augmentation ideal of $H$ by $H^+$, and write $G(H)$
for the set of group-like elements of $H$.  The
\emph{coradical filtration} of $H$, as defined at \cite[\S 5.2]{Mon},
is denoted by $\{H_n : n \geq 0 \}.$ We shall always assume that
the antipode is bijective, although this is not a significant
imposition in the present paper, since it is known to be always
true for semiprime right noetherian Hopf algebras by \cite{Sk}.

\section{Hopf Ore extensions}
\label{HopfOre}
\subsection{Definition}\label{Oredefn}
Recall that, if $R$ is a $F$-algebra and $\sigma$ is a $F$-algebra
automorphism of $R$, then a \emph{left $\sigma$-derivation $\delta$}
of $R$ is a $F$-linear endomorphism of $R$ such that
$\delta(ab) = \delta(a)b + \sigma(a)\delta(b)$ for all $a,b \in R$.
Given these ingredients, the \emph{skew polynomial algebra}
$T = R[x; \sigma, \delta]$ is the $F$-algebra generated by $R$ and
$x$, subject to the relations
\begin{equation} \label{relations}
x r - \sigma(r) x = \delta(r)
\end{equation}
for all $r \in R$. For basic properties, see for example \cite[I.1.11]{BG}.

\begin{defn} \label{HOEdef} Let $R$ be a Hopf $F$-algebra.
A \emph{Hopf Ore extension} (HOE) of $R$ is a $F$-algebra $T$ such
that
\begin{itemize}
\item
$T$ is a Hopf $F$-algebra with Hopf subalgebra $R$;
\item
there exist an algebra automorphism $\sigma$ and a
$\sigma$-derivation $\delta$ of $R$ such that $T = R[x;\sigma,
\delta]$.
\item
there are $a,b\in R$ and $v,w\in R\otimes R$ such that
\begin{equation}
\label{defnHOE3}
\Delta(x)=a\otimes x+x\otimes b+v(x\otimes x)+w.
\end{equation}
\end{itemize}
\end{defn}

We repeat here in a more precise form the question already raised in paragraph \ref{coprod}:

\begin{question} Does the third condition in the definition of a HOE follow from the first two, after changing the variable $x$?
\end{question}

We show in Lemma 1 of the next subsection that one can get some way towards a positive answer with a relatively weak additional hypothesis on $R$, namely that $R \otimes R$ is a domain; and in Proposition \ref{skewconn} we shall completely answer the question when $R$ is a connected $k$-algebra. But otherwise, the question is open.

\subsection{Comultiplication in HOEs}\label{coHOE}
Our initial target in $\S 2$ is Theorem \ref{prowinding}, where we
determine constraints on the possible HOEs which can be formed with
coefficient ring a given Hopf $F$-algebra $R$. Such a result has
previously been obtained by Panov \cite{Pa}, under the additional
hypothesis that the variable $x$ of the extension is skew primitive.
Typically, this hypothesis is not valid: consider for example the
coordinate algebra $H = \mathcal{O}(G)$ of the Heisenberg group
$G$ of dimension 3. View $G$ as the set of upper triangular
$3 \times 3$ matrices with 1s on the main diagonal, so $H$ is
generated by the coordinate functions $y,z$ and $x$ for entries
$(1,2), (2,3)$ and $(1,3)$ respectively. Then
$H = k[x,y,z] = k[y,z][x]$, a HOE with coefficient Hopf algebra
$R = k[y,z]$, but $x$ is not skew primitive:
$$ \Delta (x) = x \otimes 1 + 1 \otimes x + y \otimes z. $$
Indeed the space of primitive elements of $H$ is spanned by $y$ and
$z$, so no alternative choice of third variable renders $H$ as a
HOE of $R$ with skew primitive generator.

\begin{lem}\label{comulti}
Let $T=R[x; \sigma, \delta]$ be a Hopf $F$-algebra with $R$ a Hopf
subalgebra. Suppose that $R\otimes R$ is a domain. Then
  $\Delta(x)=s(1\otimes x)+t(x\otimes 1)+v(x\otimes x)+w$,
where $s,t, v$ and $w\in R\otimes R$.
\end{lem}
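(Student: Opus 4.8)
The plan is to expand $\Delta(x)$ in the natural $(R\otimes R)$-basis of $T\otimes T$ and to bound its two partial $x$-degrees by a degree count extracted from coassociativity, the domain hypothesis entering precisely to guarantee that degrees are additive and that leading terms do not cancel.

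First I would record the ring structure of $T\otimes T$. Writing $X=x\otimes 1$ and $Y=1\otimes x$, the defining relations of $T$ give
$$ T\otimes T=(R\otimes R)[X;\sigma\otimes\mathrm{id},\delta\otimes\mathrm{id}][Y;\mathrm{id}\otimes\sigma,\mathrm{id}\otimes\delta], $$
where on the inner layer the automorphism $\mathrm{id}\otimes\sigma$ is extended by fixing $X$ (a routine compatibility check, since $Y$ commutes with $X$). As $R\otimes R$ is a domain and an Ore extension of a domain by an automorphism is again a domain, $T\otimes T$ is a domain. Hence both $\deg_X$ and $\deg_Y$ are additive on nonzero elements, the leading coefficient of a product being the product of (twisted) leading coefficients, and so nonzero. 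In this basis write
$$ \Delta(x)=\sum_{i,j\ge 0}c_{ij}\,(x^i\otimes x^j),\qquad c_{ij}\in R\otimes R, $$
a finite sum, and set $M=\deg_Y\Delta(x)=\max\{\,j:c_{ij}\neq 0\,\}$ and $N=\deg_X\Delta(x)=\max\{\,i:c_{ij}\neq 0\,\}$. Note $\Delta(x)\neq 0$, since $(\varepsilon\otimes\mathrm{id})\Delta(x)=x$; in particular $\Delta$ is injective. The goal becomes $M\le 1$ and $N\le 1$.

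The key step is to evaluate the degree $\deg_3$ in $1\otimes 1\otimes x$ of both sides of coassociativity $(\Delta\otimes\mathrm{id})\Delta(x)=(\mathrm{id}\otimes\Delta)\Delta(x)$ in $T\otimes T\otimes T$. On the left the third factor is untouched by $\Delta\otimes\mathrm{id}$, so it carries precisely the second factor of $\Delta(x)$; applying the coefficient maps $\mathrm{id}\otimes\mathrm{id}\otimes\phi_n$, where $\phi_n\colon T\to R$ reads off the coefficient of $x^n$, one gets $(\Delta\otimes\mathrm{id}_R)[(\mathrm{id}\otimes\phi_n)\Delta(x)]$, which is nonzero iff $(\mathrm{id}\otimes\phi_n)\Delta(x)\neq 0$ (injectivity of $\Delta$), i.e. iff $n\le M$. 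Thus $\deg_3[(\Delta\otimes\mathrm{id})\Delta(x)]=M$. On the right, $\Delta$ is applied to the second factor: a term $c_{ij}(x^i\otimes x^j)$ contributes $(\cdots)\otimes\Delta(q\,x^j)=(\cdots)\otimes\Delta(q)\Delta(x)^j$, whose third-factor degree is $\deg_Y(\Delta(x)^j)=jM$ by additivity of $\deg_Y$; the maximum over the support is $M^2$, from the terms with $j=M$. Equating gives $M=M^2$, so $M\in\{0,1\}$. The symmetric count of $\deg_1$ (the degree in $x\otimes 1\otimes 1$) yields $N=N^2$, so $N\in\{0,1\}$. Therefore $c_{ij}=0$ unless $i,j\le 1$, and
$$ \Delta(x)=c_{00}+c_{10}(x\otimes 1)+c_{01}(1\otimes x)+c_{11}(x\otimes x), $$
which is the asserted form with $s=c_{01}$, $t=c_{10}$, $v=c_{11}$, $w=c_{00}$.

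I expect the main obstacle to be the justification that the top-degree term on the right of coassociativity does not cancel, i.e. that $\deg_3[(\mathrm{id}\otimes\Delta)\Delta(x)]$ really equals $M^2$ and does not drop. This is exactly where the hypothesis that $R\otimes R$ is a domain is essential: it makes $T\otimes T$ a domain, so $\deg_Y(\Delta(x)^M)=M^2$ with nonzero leading coefficient, and together with the injectivity of $\Delta$ it prevents the degree-$M^2$ contribution — which comes solely from the part $\sum_i c_{iM}(x^i\otimes x^M)$ — from vanishing. Everything else is routine bookkeeping of coefficients in the $(R\otimes R)$-basis.
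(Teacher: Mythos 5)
Your proposal is correct and follows essentially the same route as the paper: expand $\Delta(x)$ over the free $(R\otimes R)$-basis $\{x^i\otimes x^j\}$ of $T\otimes T$ and compare degrees on the two sides of coassociativity, with the domain hypothesis on $R\otimes R$ ensuring that leading terms survive. The only difference is cosmetic: the paper compares a single lexicographic leading term, $\gamma\, x^{i_0^2}\otimes x^{j_0 i_0}\otimes x^{j_0}$ versus $\nu\, x^{i_0}\otimes x^{i_0 j_0}\otimes x^{j_0^2}$, so non-cancellation is automatic from uniqueness of the leading monomial, whereas you compare partial degrees in the outer tensor slots, which is why you need the additional appeal to injectivity of $\Delta$ (correctly identified as the crux) to rule out cancellation in the sum over the inner index.
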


\begin{proof}
  Since $T$ is a free left $R$-module on basis $\{x^i: i\ge 0\}$, we can write
\begin{equation}\label{eq1}
\Delta(x)=\sum_{i, j\ge 0}w_{ij}x^i\otimes x^j
\end{equation}
where each $w_{ij} \in R\otimes R$. Fix $j_0$ to be the maximal integer such
that $w_{ij_0}\neq 0$ for some $i$ and let $i_0$ be the maximal
integer such that $w_{i_0 j_0}\neq 0$. Define $\deg r=0$ for all
$r\in R$ and $\deg x=1$. Then we can extend this to define the
lexicographic order,
which we shall call {\it degree}, for the elements of $T\otimes T$ and
$T\otimes T\otimes T$.

Applying  $\Delta\otimes 1$ to $(\ref{eq1})$ we get
\begin{equation}\label{eq2}
(\Delta\otimes 1)\Delta(x)=\sum_{i, j}(\Delta\otimes 1)
(w_{ij})((\Delta x)^i\otimes x^j)
\end{equation}
with maximal degree component $\gamma x^{i_0^2}\otimes x^{j_0 i_0}\otimes
x^{j_0}$ where $$ \gamma =(\Delta\otimes 1)(w_{i_0j_0}) (w_{i_0j_0}
(\sigma^{i_0}\otimes \sigma^{j_0})(w_{i_0 j_0})\cdots
(\sigma^{i_0}\otimes \sigma^{j_0})^{i_0-1}(w_{i_0 j_0})\otimes 1).$$
Since $R\otimes R$ is a domain, $ \gamma \neq 0$.

Similarly,
\begin{align}\label{eq3}
(1\otimes \Delta)\Delta(x)= &
\sum_{i, j}(1\otimes \Delta)(w_{ij})(x^i\otimes (\Delta x)^j)\\
\nonumber =&
\sum_{i, j}(1\otimes \Delta)(w_{ij})
(x^i\otimes (\sum_{s, t}w_{st}x^s\otimes x^t)^j)
\end{align}
with maximal degree component $\nu x^{i_0}\otimes x^{i_0 j_0}\otimes
x^{j_0^2}$ where
$$ \nu =(1\otimes \Delta)(w_{i_0j_0}) (1\otimes w_{i_0j_0}
(\sigma^{i_0}\otimes \sigma^{j_0})(w_{i_0 j_0})\cdots
(\sigma^{i_0}\otimes \sigma^{j_0})^{j_0-1}(w_{i_0 j_0})).$$ Since
$R\otimes R$ is a domain, $\nu \neq 0$.

Since (\ref{eq2}) equals (\ref{eq3}) by coassociativity, we must have
$j_0=0$ or $1$. By symmetry, the maximal value of $i$ such that
$w_{ij}\neq 0$ for some $j$ is $i=0$ or $i=1$. Hence there exists
$s,t, v$, and $w\in R\otimes R$ such that
\begin{equation}\label{eq4}
\Delta(x)=s(1\otimes x)+t(x\otimes 1) +v(x\otimes x)+ w.
\end{equation}
\end{proof}

 We repeat here for emphasis that, as a special case of the question in subsection \ref{Oredefn}, we can ask: In the above lemma, can the conclusion (\ref{eq4}) be replaced by (\ref{defnHOE3})? In partially answering this question over the next three pages we will continue to use the notation of Definition \ref{Oredefn}.

\begin{lem}
\label{gplike}
Let $T=R[x; \sigma, \delta]$ be a HOE. 
\begin{enumerate}
\item
$a,b \in G(R)$ provided that $v = 0$ or $w = 0$.
\item
$v \in F.$ \item If $v = 0$ then
\begin{equation} \label{wcond}
a \otimes w + (\mathrm{Id} \otimes
\Delta)(w) = w \otimes b + (\Delta \otimes \mathrm{Id})(w).
\end{equation}
\end{enumerate}
\end{lem}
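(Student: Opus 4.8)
The plan is to start from the coassociativity constraint already exploited in Lemma \ref{comulti}, but now applied to the refined comultiplication
\begin{equation*}
\Delta(x) = s(1\otimes x) + t(x\otimes 1) + v(x\otimes x) + w,
\end{equation*}
and to read off the three claims by comparing coefficients of the monomials $x^i\otimes x^j\otimes x^k$ in $(\Delta\otimes\mathrm{Id})\Delta(x)$ and $(\mathrm{Id}\otimes\Delta)\Delta(x)$. The key computational input is that, since $R$ acts on the right of $x$ through the twisting relations \eqref{relations}, each factor $x$ in a tensor slot must be commuted past the coefficients $s,t,v,w$, producing the automorphism $\sigma$ (and, in principle, $\delta$, but the $\delta$-terms lower the $x$-degree and so do not contribute to the top-degree comparisons). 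I would organise the monomial bookkeeping by total $x$-degree, since coassociativity forces equality in each graded piece separately.

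First I would prove (ii). The only source of an $x\otimes x\otimes x$ term is the $v(x\otimes x)$ summand: expanding $(\Delta\otimes\mathrm{Id})(v(x\otimes x))$ and $(\mathrm{Id}\otimes\Delta)(v(x\otimes x))$ and tracking the degree-$(1,1,1)$ component gives an identity forcing $\Delta(v) = v\otimes 1 = 1\otimes v$ in the appropriate sense, equivalently that $v$ is a scalar; here the domain hypothesis carried over from Lemma \ref{comulti} guarantees that the leading coefficient is nonzero so the comparison is not vacuous. Next I would establish (i). Setting $v=0$ (or $w=0$) and comparing the degree-$(0,1,1)$ and degree-$(1,1,0)$ components isolates the equations governing $s$ and $t$; the top-degree-in-$x$ terms reduce to $\Delta(t) = t\otimes t$ and $\Delta(s) = s\otimes s$ together with $\varepsilon(s)=\varepsilon(t)=1$, which is exactly the assertion that $s=a$ and $t=b$ are group-like. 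Applying the counit $\varepsilon\otimes\mathrm{Id}\otimes\mathrm{Id}$ and its right-hand analogue to \eqref{eq1} is a convenient way to pin down the normalisations and to see that $a,b\in R$ genuinely lie in $G(R)$ rather than merely being grouplike in $R\otimes R$.

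Finally, for (iii) I would specialise to $v=0$ (so by (i) $a,b\in G(R)$) and collect the terms that are linear in $x$ overall, i.e. the degree-$(0,0,1)$, $(0,1,0)$ and $(1,0,0)$ components of the coassociativity identity. The $x\otimes 1\otimes 1$, $1\otimes x\otimes 1$, $1\otimes 1\otimes x$ coefficients recombine into the single relation
\begin{equation*}
a\otimes w + (\mathrm{Id}\otimes\Delta)(w) = w\otimes b + (\Delta\otimes\mathrm{Id})(w),
\end{equation*}
which is precisely \eqref{wcond}; the grouplike elements $a$ and $b$ enter through the $\Delta\otimes\mathrm{Id}$ and $\mathrm{Id}\otimes\Delta$ images of $s(1\otimes x)$ and $t(x\otimes 1)$. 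The main obstacle I anticipate is purely organisational: cleanly separating the contributions by $x$-degree while correctly tracking the $\sigma$-twists introduced when $x$ is commuted to the right past elements of $R\otimes R$, and verifying that the lower-degree $\delta$-corrections really do not disturb the leading identities. Once the grading is set up carefully, each of (i)–(iii) falls out of a single coefficient comparison.
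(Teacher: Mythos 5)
Your overall strategy --- apply coassociativity to $\Delta(x)$ and compare coefficients of the monomials $x^i\otimes x^j\otimes x^k$ --- is exactly the paper's, but three of your coefficient identifications are wrong, and one of them hides a genuine gap. The most serious is (ii): the degree-$(1,1,1)$ comparison yields $((\Delta\otimes\mathrm{Id})(v))(v\otimes 1)=((\mathrm{Id}\otimes\Delta)(v))(1\otimes v)$ (up to $\sigma$-twists), an identity \emph{quadratic} in $v$ from which one cannot read off $\Delta(v)=v\otimes 1=1\otimes v$: the two sides carry different companion factors, so no cancellation is available even when $R\otimes R\otimes R$ is a domain, and the identity is vacuous if $v=0$. (In Proposition \ref{skewconn} the authors do extract scalarity of $v$ from precisely this equation, but only via a degree argument on the coradical filtration that requires $R$ connected; that tool is not available for a general HOE.) The paper instead proves (ii) by splitting into the cases $w\neq 0$ and $w=0$: in the first it compares outermost tensorands in the degree-$(0,0,1)$ and $(1,0,0)$ identities \eqref{eq7} and \eqref{eq8}, using the presence of a nonzero $w$ to force $v\in F\cdot(1\otimes 1)$; in the second it uses the degree-$(0,1,1)$ and $(1,1,0)$ components together with part (i).

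For (i) you propose the degree-$(0,1,1)$ and $(1,1,0)$ components, but when $v=0$ these vanish identically and say nothing about $a$ and $b$; the group-like property comes from the degree-$(0,0,1)$ and $(1,0,0)$ components, which read $\Delta(a)\otimes 1+((\Delta\otimes\mathrm{Id})v)(w\otimes 1)=a\otimes a\otimes 1$ and its mirror image, and collapse to $\Delta(a)=a\otimes a$ and $\Delta(b)=b\otimes b$ exactly when $v=0$ or $w=0$. For (iii), the identity \eqref{wcond} lives entirely in $R\otimes R\otimes R$ with no $x$'s: it is the degree-$(0,0,0)$ (constant) component of coassociativity, not a recombination of the linear components, which instead reproduce \eqref{eq7}, \eqref{eq8} and a trivial identity. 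Finally, note that the lemma concerns a HOE, so $\Delta(x)=a\otimes x+x\otimes b+v(x\otimes x)+w$ with $a,b\in R$ is given by Definition \ref{HOEdef}; you do not need to start from the weaker form of Lemma \ref{comulti} with coefficients $s,t\in R\otimes R$, nor import the hypothesis that $R\otimes R$ is a domain.
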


\begin{proof} (i) Applying $\Delta\otimes 1$ and $1\otimes \Delta$,
respectively, to \eqref{defnHOE3}, we have
\begin{align}\label{eq5}
(\Delta\otimes 1)&\Delta(x)\\
\nonumber&= \Delta a \otimes x+(a\otimes x+x\otimes b+v(x\otimes x)+w)
\otimes b\\
\nonumber&\quad +((\Delta\otimes 1)v)((a\otimes x+x\otimes b+v(x\otimes
x)+w)\otimes x)+(\Delta\otimes 1)w,
\end{align}
and
\begin{align}\label{eq6}
(1\otimes \Delta)&\Delta x\\
\nonumber &= a\otimes (a\otimes x+x\otimes b+v(x\otimes x)+w)
+ x\otimes \Delta b\\
\nonumber &\quad + ((1\otimes \Delta)v)(x\otimes
(a\otimes x+x\otimes b+v(x\otimes x)+w))+(1\otimes \Delta)w
\end{align}
Equating coefficients in $R \otimes R \otimes R$ of $1\otimes 1\otimes x$
in $(\ref{eq5})$ and $(\ref{eq6})$ yields
\begin{equation}\label{eq7}
\Delta(a)\otimes 1+((\Delta\otimes 1)v)(w\otimes 1)=a\otimes a\otimes 1.
\end{equation}
Consider coefficients of $x\otimes 1\otimes 1$ in $(\ref{eq5})$ and
$\ref{eq6}$ to get
\begin{equation}\label{eq8}
1\otimes b\otimes b=1\otimes \Delta b+((1\otimes \Delta)v)(1\otimes w).
\end{equation}
Suppose that
\begin{equation}
\text{$w=0$ or $v=0$.}
\end{equation}
Then $(\ref{eq7})$ and $(\ref{eq8})$ at once yield (i).

(ii) \textbf{Case 1:} $w \neq 0.$ Compare the right-most tensorands
in \eqref{eq7} to get $v = v_1 \otimes 1.$ A similar comparison of
the left-most tensorands in \eqref{eq8} yields $v = 1 \otimes v_2.$
It follows that $v \in F.$

\textbf{Case 2:} $w = 0.$ Comparing the coefficients of
$1 \otimes x \otimes x$ in (\ref{eq5}) and (\ref{eq6}) shows that
\begin{equation} \label{wzero}
((\Delta \otimes \mathrm{Id})(v))(a \otimes 1 \otimes 1)
= a \otimes v.
\end{equation}
Write $v = \sum c_i \otimes d_i$ with $\{d_i\}$ linearly independent.
Then (\ref{wzero}) shows that, for each $i$,
$$ \Delta(c_i)(a \otimes 1) = (1 \otimes c_i)(a \otimes 1).$$
From (i), given that $w = 0$ we know that $a \in G(R)$, so we deduce
from the above equation that $\Delta (c_i) = 1 \otimes c_i$ for all
$i$. Applying $1 \otimes \varepsilon$ to $\Delta c_i$ we see that
$c_i \in k$ for all $i$, so that $v = 1 \otimes d$ for some
$d \in R.$ Now consider the coefficient of $x \otimes x \otimes 1$
in (\ref{eq5}) and (\ref{eq6}): we find that
$$ v \otimes b = ((\mathrm{Id} \otimes \Delta)(v))(1 \otimes 1 \otimes b). $$
However, this implies that
$$ \Delta (d) = d \otimes 1, $$
and applying $ \varepsilon \otimes \mathrm{Id}$ gives $d \in k$ as well.
Therefore $v \in F$ in Case 2 also.

Finally, (iii) follows by comparing the coefficients of $1 \otimes
1 \otimes 1$ in (\ref{eq5}) and (\ref{eq6}).
\end{proof}

\subsection{The antipode}
\label{anti}

\begin{lemma}\label{Sx}
Let $T=R[x; \sigma, \delta]$ be a Hopf $F$-algebra with $R$ a Hopf
subalgebra. Suppose that $\Delta(x)$ has the form \eqref{defnHOE3},
writing $w=\sum w_1\otimes w_2.$ Then we have the following.
\begin{enumerate}
\item
Suppose that $R$ is a domain. Then $S(x)=\alpha x+ \beta$
for $\beta\in R$ and $\alpha \in R^{\times}$.
\end{enumerate}
For the remainder of the lemma, assume that $S$ has the form obtained in
{\rm{(i)}}.
\begin{enumerate}
\item[(ii)]
$v=0$.
\item[(iii)]
$a, b\in G(R)$.
\item[(iv)]
$\alpha=-a^{-1}\sigma(b^{-1})$.
\item[(v)]
$\beta=a^{-1}(\varepsilon (x)- \delta(b^{-1}) - \sum w_1S(w_2))$.
\end{enumerate}
\end{lemma}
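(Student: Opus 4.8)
The plan is to make part (i) the heart of the argument and then to read parts (ii)--(v) off the antipode identity $\sum x_1S(x_2)=\varepsilon(x)1$ once the normal form of $S(x)$ is known. I write every element of $T$ uniquely as $\sum_i r_ix^i$ with $r_i\in R$ and let $\deg$ denote the resulting $x$-degree; because $R$ is a domain, so is $T$, and $\deg$ is additive on products. Since $R$ is a Hopf subalgebra we have $S(R)\subseteq R$, and as $S$ is bijective its inverse $S^{-1}$ is again an anti-automorphism with $S^{-1}(R)\subseteq R$; both facts are used in (i).

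For (i) I would argue by degree, comparing $x$ with $S(S^{-1}(x))$. Set $n=\deg S(x)$ and $m=\deg S^{-1}(x)$. First $n,m\ge 1$: if, say, $S(x)\in R$, then $S(T)$, being the subalgebra generated by the images $S(R)\subseteq R$ and $S(x)\in R$, would lie in $R\subsetneq T$, contradicting surjectivity of $S$. Writing $S^{-1}(x)=\sum_{i=0}^m c_ix^i$ with $c_m\ne 0$ and applying $S$ gives $x=\sum_{i=0}^m S(x)^iS(c_i)$, whose summand $i=m$ has degree $nm$ while all others have degree $ni\le n(m-1)<nm$; since $T$ is a domain the top term survives, so $nm=\deg x=1$ and hence $n=m=1$. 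Thus $S(x)=\alpha x+\beta$ with $\alpha,\beta\in R$ and $\alpha\ne 0$. The delicate point---and, I expect, the only real obstacle---is to promote $\alpha$ to a \emph{unit}. Here I would use $m=1$ again: comparing the coefficient of $x$ in $x=S(x)S(c_1)+S(c_0)=(\alpha x+\beta)S(c_1)+S(c_0)$ gives $\alpha\,\sigma(S(c_1))=1$, so $\alpha$ has a right inverse $u$; and in a domain $\alpha u=1$ forces $\alpha(u\alpha-1)=0$, whence $u\alpha=1$ and $\alpha\in R^{\times}$.

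With $S(x)=\alpha x+\beta$ in hand, I would expand $\sum x_1S(x_2)=\varepsilon(x)1$ using \eqref{defnHOE3}. By Lemma~\ref{gplike}(ii) we already know $v\in F$, so write $v=\lambda\in F$; the identity becomes
\[
a\,S(x)+x\,S(b)+\lambda\,x\,S(x)+\textstyle\sum w_1S(w_2)=\varepsilon(x)1.
\]
The only degree-$2$ contribution is the $x^2$-term $\lambda\sigma(\alpha)x^2$ arising from $\lambda\,xS(x)$, so comparing coefficients of $x^2$ yields $\lambda\sigma(\alpha)=0$; as $\sigma(\alpha)$ is a unit and $T$ a domain, $\lambda=0$, which is (ii). Part (iii) is then immediate: Lemma~\ref{gplike}(i) gives $a,b\in G(R)$ as soon as $v=0$, and in particular $a,b\in R^{\times}$ with $S(a)=a^{-1}$ and $S(b)=b^{-1}$.

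Finally, with $v=0$ and $a,b$ group-like the identity collapses to
\[
a(\alpha x+\beta)+x\,b^{-1}+\textstyle\sum w_1S(w_2)=\varepsilon(x)1.
\]
Rewriting $xb^{-1}=\sigma(b^{-1})x+\delta(b^{-1})$ and comparing coefficients of $x$ gives $a\alpha+\sigma(b^{-1})=0$, i.e.\ $\alpha=-a^{-1}\sigma(b^{-1})$, which is (iv); comparing the constant terms gives $a\beta+\delta(b^{-1})+\sum w_1S(w_2)=\varepsilon(x)$, i.e.\ $\beta=a^{-1}\bigl(\varepsilon(x)-\delta(b^{-1})-\sum w_1S(w_2)\bigr)$, which is (v). Thus (ii)--(v) are routine coefficient comparisons, and the substance of the lemma lies entirely in establishing the normal form and the unitality of $\alpha$ in part (i).
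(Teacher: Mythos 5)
Your proposal is correct and follows essentially the same route as the paper: the paper also establishes (i) by expressing $x$ as a polynomial in $S(x)$ over the domain $R$ (phrased there via the observation that $T=R[S(x);S\sigma^{-1}S^{-1},-S\delta\sigma^{-1}S^{-1}]$, which your computation with $S(S^{-1}(x))$ reproduces, including the leading-coefficient equation that makes $\alpha$ a unit), and then derives (ii)--(v) from $\mu(1\otimes S)\Delta(x)=\varepsilon(x)$ exactly as you do, with the $x^2$-coefficient $v\sigma(\alpha)$ killing $v$.
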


\begin{proof}(i) Since $S$ is an anti-automorphism of $T$ it is
easily checked that, for all $r \in R$,
$$ S(x)r - S\sigma^{-1}S^{-1}(r)S(x) = -S\delta\sigma^{-1}S^{-1}(r).$$
Thus $T$ is an Ore extension of $R$ with indeterminate $S(x)$,
$$ T = R[S(x) ; S\sigma^{-1}S^{-1}, -S\delta\sigma^{-1}S^{-1}].$$
Since $R$ is a domain, (i) follows by considering the expression
for $x$ as a polynomial in $S(x)$ with coefficients from $R$.

(ii) By the defining property of the antipode, and using Lemma
\ref{gplike}(ii) of $\S$\ref{coHOE} at the third equality,
\begin{align}\label{antipode1}
\varepsilon(x) =&\,\mu(1\otimes S)\Delta(x)\\
\nonumber =&\,\mu(1\otimes S)(a\otimes x+x\otimes b+v(x\otimes x)+w)\\
\nonumber=&\, \mu(a\otimes \alpha x +a\otimes \beta +x\otimes Sb+
  v(x\otimes \alpha x+ x\otimes \beta)+\sum w_1\otimes Sw_2)\\
\nonumber=&\, a\alpha x+v x\beta+xSb+vx\alpha x+ a\beta+ \sum w_1Sw_2.
\end{align}
The only term of degree $2$ in $x$ on the right-hand side of
$(\ref{antipode1})$ is
$$v\sigma(\alpha)x^2.$$
This must therefore be zero. By (i), $\sigma(\alpha)$ is a unit of $R$, so
(ii) follows.

(iii) is immediate from (ii) and Lemma \ref{gplike}(i) of $\S$\ref{coHOE}.

(iv),(v) Given (ii), $(\ref{antipode1})$ simplifies to
\begin{align*}
\varepsilon (x)= &\, a\alpha x+xSb+ a\beta+ \sum w_1Sw_2\\
= &\, (a\alpha +\sigma(b^{-1}))x+\delta(b^{-1})+ a\beta+ \sum
w_1Sw_2.
\end{align*}
Using (iii) and equating the coefficient of $x$ and the constant
term, (iv) and (v) follow.
\end{proof}

\subsection{Panov's theorem generalised}\label{permit}
The polynomial variable of a skew polynomial extension is far from
uniquely determined: for if $T = R[x; \sigma, \delta]$ is a skew
polynomial algebra and $\lambda\in F$, then $\delta_{\lambda}:=
\delta+\lambda(Id-\sigma)$ is another $\sigma$-derivation of $R$,
as is easily checked, and
$$T=R[x+\lambda; \sigma, \delta_{\lambda}].$$
Whenever convenient, therefore, we may assume without loss of
generality when studying a HOE $T = R[x;\sigma,\delta]$ that
$$x\in T^+$$
Moreover, given a unit of $R$, say for example $b^{-1}$, replacing
$x$ by $b^{-1}x$, and writing $\mathrm{ad}(b^{-1})$ to denote
conjugation by $b^{-1}$, one easily checks that
$$ T = R[b^{-1}x; \mathrm{ad}(b^{-1})\sigma, b^{-1}\delta].$$
In practice, $b$ will be a group-like element of a Hopf algebra
when we apply this below, so this usage of the notation
$\mathrm{ad}$ coincides with the standard Hopf notation
$\mathrm{ad}_{\ell}$, \cite[page 33]{Mon}.

\begin{thrm}
\label{prowinding}
\begin{enumerate}
\item
Let $R$ be a Hopf $F$-algebra and let $T = R[x; \sigma, \delta]$
be a HOE of $R$. Suppose that
\begin{equation}
\label{(H)}\tag{H}
%\;\;\Delta (x) \textit{ has the form \eqref{defnHOE3} and that}
S(x) = \alpha x + \beta \textit{  for } \alpha,\beta
\in R \textit{ with } \alpha \in R^{\times}.
\end{equation}
Write $w = \sum w_1 \otimes
w_2 \in R \otimes R,$ with $\{w_1\}$  and $\{w_2\}$  chosen to be
$F$-linearly independent subsets of $R.$ Then the following hold.
\begin{enumerate}
\item[(a)]
$a, b \in G(R)$ and $v = 0.$
\item[(b)]
After a change
of the variable $x$ and corresponding adjustments to $\sigma$,
$\delta$ and $w$,
\begin{equation} \label{xcounit} \varepsilon (x) = 0
\end{equation} and
\begin{equation}
\label{copx} \Delta(x)=a\otimes x+ x\otimes 1+ w.
\end{equation}
For the remainder of {\rm{(i)}}, we assume that \eqref{xcounit} and
\eqref{copx} hold.
\item[(c)]
$S(x) = -a^{-1}(x + \sum w_1S(w_2)).$
  \item[(d)]
There is a character $\chi:
R\rightarrow F$ such that
\begin{equation}\label{winding}
\sigma(r)=\chi(r_1)r_2 = ar_1a^{-1}\chi(r_2) = \mathrm{ad}(a)\circ
\tau^r_{\chi}(r),
\end{equation}
for all $r\in R$. That is, $\sigma$ is a left winding automorphism
$\tau^{\ell}_{\chi}$, and is the composition of the corresponding
right winding automorphism with conjugation by $a$.
  \item[(e)]
The $\sigma$-derivation $\delta$ satisfies the relation
\begin{equation} \label{delta}
\Delta\delta(r)-\delta(r_1)\otimes r_2 -ar_1\otimes
\delta(r_2)=w\Delta(r)-\Delta\sigma(r)w.
\end{equation}
\item[(f)]
  The element $w$ is in $R^+ \otimes R^+$, and satisfies the identities
\begin{equation} \label{antiw} S(w_1)w_2 = a^{-1}w_1S(w_2),
\end{equation}
and
\begin{equation} \label{pinned}
w \otimes 1 + (\Delta \otimes \mathrm{Id})(w) = a
\otimes w + (\mathrm{Id} \otimes \Delta)(w).
\end{equation}
\end{enumerate}
\item
Let $R$ be a Hopf $F$-algebra. Suppose given $a \in G(R)$,
$w \in R \otimes R$, a $F$-algebra automorphism $\sigma$ of $R$
and a $\sigma$-derivation $\delta$ of $R$ such that this data
satisfies {\rm{(d), (e)}} and {\rm{(f)}} of {\rm{(i)}}. Then
the skew polynomial algebra $T = R[x; \sigma, \delta]$ admits a
structure of Hopf algebra with $R$ as a Hopf subalgebra, and
with $x$ satisfying $\mathrm{(H)}$, {\rm{(a), (b)}} and
{\rm{(c)}} of {\rm{(i)}}. As a consequence, $T$ is a HOE
of $R$.
\end{enumerate}
\end{thrm}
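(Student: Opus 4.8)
The plan is to handle the two directions separately. Direction (i) is essentially a harvesting exercise: the hypothesis $\mathrm{(H)}$ supplies exactly the conclusion of Lemma \ref{Sx}(i), so parts (ii)--(v) of that lemma apply verbatim (no domain hypothesis on $R$ is needed for these). This immediately gives $v=0$ and $a,b\in G(R)$, which is (a), and records $\alpha,\beta$ in terms of $a,b,w,\varepsilon(x)$. For (b) I would normalise in two moves, as licensed by the opening of \S\ref{permit}. First, since $b\in G(R)$ is a unit, replace $x$ by $b^{-1}x$; as $b^{-1}$ is group-like, $\Delta(b^{-1}x)=(b^{-1}\otimes b^{-1})\Delta(x)$ has the shape $a'\otimes x'+x'\otimes 1+w'$ with $a'=b^{-1}a\in G(R)$, achieving $b=1$. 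Second, since $\varepsilon(x)\in F$, replace $x$ by $x-\varepsilon(x)$, which preserves \eqref{copx} (only $w$ changes) and forces \eqref{xcounit}. One checks that each move keeps $T$ a HOE still satisfying $\mathrm{(H)}$ — routine, since left multiplication by a unit and a scalar shift carry $S(x)=\alpha x+\beta$ to an expression of the same shape with $\alpha$ still a unit — so Lemma \ref{Sx} may be reapplied in the normalised setting. With $b=1$ and $\varepsilon(x)=0$, parts (iv),(v) collapse to $\alpha=-a^{-1}$ and $\beta=-a^{-1}\sum w_1S(w_2)$, which is (c).

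For (d) and (e) I would apply $\Delta$ to the Ore relation \eqref{relations}, expanding $\Delta(x)\Delta(r)=\Delta(\sigma(r))\Delta(x)+\Delta(\delta(r))$ via \eqref{copx}, and then sort the resulting identity by the bidegree in $x$ of the two tensor slots. The slot-degree $(1,0)$ part yields $\Delta\sigma(r)=(\sigma\otimes\mathrm{Id})\Delta(r)$; applying $\varepsilon\otimes\mathrm{Id}$ and setting $\chi:=\varepsilon\circ\sigma$ (a character, being a composite of algebra maps) gives $\sigma=\tau^\ell_\chi$. Feeding this into the slot-degree $(0,1)$ part and applying $\mathrm{Id}\otimes\varepsilon$ gives $\sigma(r)a=a\tau^r_\chi(r)$, i.e. $\sigma=\mathrm{ad}(a)\circ\tau^r_\chi$; together these are \eqref{winding}, proving (d). The slot-degree $(0,0)$ part, after transposing terms, is precisely \eqref{delta}, proving (e). For (f): the counit axioms $(\varepsilon\otimes\mathrm{Id})\Delta(x)=x=(\mathrm{Id}\otimes\varepsilon)\Delta(x)$ applied to \eqref{copx} (using $\varepsilon(a)=1$, $\varepsilon(x)=0$) force $(\varepsilon\otimes\mathrm{Id})(w)=0=(\mathrm{Id}\otimes\varepsilon)(w)$, whence $w\in R^+\otimes R^+$; identity \eqref{pinned} is Lemma \ref{gplike}(iii) with $b=1$; and \eqref{antiw} follows by computing $\mu(S\otimes\mathrm{Id})\Delta(x)=\varepsilon(x)=0$, which gives $S(x)=-a^{-1}x-\sum S(w_1)w_2$, and comparing with (c).

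For direction (ii) I would build $\Delta,\varepsilon,S$ from the data. By the universal property of $R[x;\sigma,\delta]$, to define an algebra map $\Delta\colon T\to T\otimes T$ extending $\Delta_R$ with $\Delta(x)=a\otimes x+x\otimes 1+w$ it suffices that $\Delta(x)\Delta(r)-\Delta(\sigma(r))\Delta(x)=\Delta(\delta(r))$ holds in $T\otimes T$; sorting by $x$-bidegree as above, the $(1,0)$ and $(0,1)$ parts are exactly the two forms of $\sigma$ asserted in (d), and the $(0,0)$ part is (e), so $\Delta$ is well defined. Likewise $\varepsilon\colon T\to F$ with $\varepsilon(x)=0$ is well defined once $\varepsilon\circ\delta=0$, which I would obtain by applying $\varepsilon\otimes\varepsilon$ to \eqref{delta} and using $w\in R^+\otimes R^+$. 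The bialgebra axioms reduce by multiplicativity to the generator $x$: the counit law on $x$ uses $w\in R^+\otimes R^+$, while coassociativity on $x$ is exactly \eqref{pinned}. Finally I would define the algebra anti-endomorphism $S$ with $S|_R=S_R$ and $S(x)=-a^{-1}(x+\sum w_1S(w_2))$ through the universal property applied to $T^{\mathrm{op}}$, and then verify the antipode axioms on generators: on $R$ they hold since $S_R$ is the antipode of $R$, while on $x$ the identity $\mu(\mathrm{Id}\otimes S)\Delta(x)=0$ is immediate from the formula and $\mu(S\otimes\mathrm{Id})\Delta(x)=0$ is precisely \eqref{antiw}. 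Since these identities form a linear subspace closed under products (using that $S$ is an anti-homomorphism and $\Delta$ a homomorphism) and contain the algebra generators, they hold on all of $T$, so $S$ is a two-sided antipode and $T$ is a Hopf algebra with $R$ a Hopf subalgebra and $x$ satisfying $\mathrm{(H)}$,(a),(b),(c) by construction.

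The routine but genuinely laborious step in both directions is the bidegree expansion of $\Delta$ applied to the Ore relation, since the $\sigma$- and $\delta$-twists must be carried through $\Delta(x)\Delta(r)$ correctly. In direction (ii) the subtlest point, and where I expect the real work to lie, is the well-definedness of $S$: checking that the image of the defining relation, namely $S_R(r)S(x)-S(x)S_R(\sigma(r))=S_R(\delta(r))$ in $T$, requires feeding (d), (e) and \eqref{antiw} in simultaneously, and it is this identity — rather than the antipode axioms, which are comparatively quick once $S$ exists — that will demand the most care.
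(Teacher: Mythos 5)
Your proposal is correct and follows essentially the same route as the paper's proof: parts (a)--(c) are harvested from Lemma \ref{Sx} under hypothesis (H) (whose parts (ii)--(v) indeed need no domain assumption) together with the variable changes $x\mapsto b^{-1}x$ and $x\mapsto x-\varepsilon(x)$ licensed by the discussion at the start of \S\ref{permit}; (d) and (e) come, exactly as in the paper, from applying $\Delta$ to the Ore relation and comparing the coefficients of $x\otimes 1$, $1\otimes x$ and $1\otimes 1$; (f) uses the same counit, coassociativity and antipode identities; and your part (ii) is the same direct construction the paper sketches, with the well-definedness of $\Delta$, $\varepsilon$ and $S$ rightly identified as the content. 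Your only deviations are cosmetic and both valid: you extract $\chi=\varepsilon\circ\sigma$ from the identity $\Delta\sigma=(\sigma\otimes\mathrm{Id})\Delta$ rather than via the paper's device $\chi(r)=\sigma(r_1)S(r_2)$ with $\Delta\chi(r)=\chi(r)\otimes 1$, and you get $\varepsilon\delta=0$ by applying $\varepsilon\otimes\varepsilon$ to \eqref{delta} rather than the paper's $\mu\circ(\mathrm{Id}\otimes\varepsilon)$ followed by $\varepsilon$, while leaving the long anti-homomorphism check for $S$ at precisely the ``routine but long'' level of detail at which the paper itself leaves it.
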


\begin{proof} (i)(a) Lemma \ref{anti} (ii) and (iii).

(b) This follows from the discussion before the theorem.

(c) Given (b), this is Lemma \ref{anti}(iv) and (v).

(d), (e) The proofs of these are very similar to the corresponding
parts of \cite[Theorem 1.3]{Pa}. In brief, to obtain the first
equality in (d), one first writes down the conditions on $\Delta$
which ensure that the defining relations (\ref{relations}) for $T$
are satisfied. Thus, define
$$\chi:R \longrightarrow R: r \mapsto \sigma (r_1)S(r_2).$$
Calculation using (\ref{relations}) confirms that, for $r \in R$,
$\Delta \circ \chi (r) =
\chi (r) \otimes 1,$ whence $\chi (r) \in k.$ Then one checks that
$\chi$ is a character of $F$, and that $\sigma =
\tau^{\ell}_{\chi}$ as in (d).

Now let $r \in R$ and apply $\Delta$ to the equation $xr =
\sigma(r)x + \delta (r).$ Comparing the coefficients of $x \otimes
1$, of $1 \otimes x$, and of $1 \otimes 1$ yields the rest of (d)
and (e).

(f) (\ref{pinned}) is Lemma \ref{gplike}(ii). From the identities
$(\mathrm{Id} \otimes \varepsilon) \circ \Delta (x) = x =
(\varepsilon \otimes \mathrm{Id}) \circ \Delta (x)$ and the linear
independence of $\{w_1\}$ and $\{w_2\}$ we deduce that the $w_i$
are in the augmentation ideal of $R$. Finally, (\ref{antiw})
follows from the identity $\mu \circ (S \otimes \mathrm{Id}) \circ
\Delta (x) = \varepsilon (x)$, together with (\ref{xcounit}) and
(\ref{copx}).

(ii) We leave to the reader the task of checking that, given the
stated conditions, $\Delta : R \longrightarrow R \otimes R$ can be
extended to an algebra homomorphism from $T$ to $T \otimes T$ by
defining $\Delta (x)$ as in (\ref{copx}). The antipode is defined
as in (c), and the counit $\varepsilon$ is extended to $T$ by
setting $\varepsilon (x) = 0.$ It is a routine but quite long
series of calculations, similar to, but more involved than, those
in the proof of \cite[Theorem 1.3]{Pa}, to show that these
extensions can be achieved and that the Hopf axioms are all
satisfied. To show that $\varepsilon$ is an algebra homomorphism
one needs to confirm that $\varepsilon$ respects the relation
$xr - \sigma (r) x = \delta (r)$ for all $r \in R$; in other words,
one must show that, for all $r \in R$, $\varepsilon \delta(r) = 0.$
This is done by applying $\mu \circ (\mathrm{Id} \otimes \varepsilon)$
to (\ref{delta}). This gives, for each $r \in R$,
$$ 0 = (w_1 \varepsilon(w_2))r - \sigma (r)(w_1 \varepsilon (w_2))
+ ar_1 \varepsilon \delta (r_2).$$
Since $w \in R^+ \otimes R^+$ by (i)(f), it follows that
$$ r_1 \varepsilon \delta (r_2) = 0.$$
Now apply $\varepsilon$ to both sides and use the fact that
$\varepsilon \delta$ is a linear map.
\end{proof}

\subsection{The coradical of a HOE}\label{coradHOE}

\begin{prop}\label{coradical}
Let $R$ be a Hopf $F$-algebra and let $T = R[x; \sigma, \delta]$
be a HOE of $R$. Assume $v=0$ and write $w = \sum w_1 \otimes
w_2 \in R \otimes R$, with $\{w_1\}$ and $\{w_2\}$ chosen to be
$F$-linearly independent subsets of $R$.
Then $R_0=T_0$;
that is, the coradical of $T$ is the coradical of $R$.
\end{prop}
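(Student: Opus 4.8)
The plan is to produce an explicit coalgebra filtration of $T$ whose degree-zero piece is exactly $R$, and then to invoke the minimality of the coradical filtration among all coalgebra filtrations. One inclusion is immediate and requires no hypotheses: since $R$ is a subcoalgebra of $T$, every simple subcoalgebra of $R$ is a simple subcoalgebra of $T$, so $R_0 \subseteq T_0$. The entire content of the proposition is therefore the reverse inclusion $T_0 \subseteq R_0$, and this is where the assumption $v = 0$ will be used.

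To get it, I would filter $T$ by degree in $x$: set $F_jT := \bigoplus_{i=0}^{j} Rx^i$, so that $F_0T = R$, the $F_jT$ are increasing, and $\bigcup_j F_jT = T$. Because the relations $xr = \sigma(r)x + \delta(r)$ keep $Rx^p \cdot Rx^q \subseteq \sum_{i \le p+q} Rx^i$, this is an algebra filtration, $F_pT \cdot F_qT \subseteq F_{p+q}T$; consequently $\{\, G_n := \sum_{p+q=n} F_pT \otimes F_qT \,\}$ is an algebra filtration of $T \otimes T$, and one checks directly that $G_i \subseteq G_j$ for $i \le j$, so that $\sum_{p+q \le j} F_pT \otimes F_qT = G_j$. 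The crucial claim is that $\{F_jT\}$ is a \emph{coalgebra} filtration, i.e. $\Delta(F_jT) \subseteq G_j$. Since $\Delta$ is an algebra homomorphism and $\{G_n\}$ is multiplicative, it suffices to verify this on generators: $\Delta(R) \subseteq R \otimes R = G_0$ because $R$ is a subcoalgebra, and, using $v=0$,
\[
\Delta(x) = a \otimes x + x \otimes b + w \in F_0T \otimes F_1T + F_1T \otimes F_0T = G_1,
\]
with $a,b \in R$ and $w \in R \otimes R = G_0 \subseteq G_1$. Then $\Delta(Rx^i) = \Delta(R)\Delta(x)^i \subseteq G_0 \cdot G_1^i \subseteq G_i \subseteq G_j$ for $i \le j$, giving $\Delta(F_jT) \subseteq G_j$ as claimed. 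I expect this verification of the coalgebra-filtration property to be the main (indeed the only load-bearing) step: it is precisely here that $v=0$ is indispensable, since the term $v(x \otimes x)$ would land in $F_1T \otimes F_1T$, of total bidegree $2$, and would already violate the required containment $\Delta(F_1T) \subseteq G_1$.

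Finally I would appeal to the standard fact that the coradical filtration is the smallest coalgebra filtration, so that the coradical is contained in the degree-zero part of any coalgebra filtration (see e.g.\ \cite[Lemma 5.3.4]{Mon}). Applied to $\{F_jT\}$ this gives $T_0 \subseteq F_0T = R$. Since the coradical $T_0$ is cosemisimple and now sits inside $R$, each of its simple subcoalgebras is a simple subcoalgebra of $R$ and hence lies in $R_0$; thus $T_0 \subseteq R_0$. Combined with the inclusion $R_0 \subseteq T_0$ noted at the outset, this yields $R_0 = T_0$. (Note that the finer information $w \in R^+ \otimes R^+$ is not needed here; only $v=0$ and the general form $\Delta(x) = a \otimes x + x \otimes b + w$ with $a,b \in R$, $w \in R \otimes R$ enter.)
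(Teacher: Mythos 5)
Your proof is correct and follows essentially the same route as the paper: filter $T$ by degree in $x$, use $v=0$ to verify this is a coalgebra filtration, and invoke \cite[Lemma 5.3.4]{Mon} to conclude $T_0 \subseteq F_0T = R$ and hence $T_0 = R_0$. You merely spell out the multiplicativity argument that the paper dismisses as ``easy to check,'' and replace the paper's citation of \cite[Lemma 5.2.12]{Mon} for the last step with a direct cosemisimplicity argument.
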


\begin{proof} Since $R$ is a subcoalgebra of $T$, we have $R_0\subset T_0$.
Let $F_nT$ be the $F$-subspace of $T$ spanned by elements of degree
$\le n$ in $x$. Obviously $\{F_nT\}_{n\ge 0}$ is an exhaustive
filtration on $T$ and $F_0T=R$. Since $\Delta(x)$ has the form
\eqref{defnHOE3} with $v = 0$, it is easy to check
$\{F_nT\}_{n\ge 0}$ is a coalgebra filtration on $T$ in the sense
that
$$\Delta(F_nT)\subseteq \sum_{i=0}^{n}F_iT\otimes F_{n-i}T.$$
By \cite[5.3.4 Lemma]{Mon}, $T_0\subseteq F_0T=R$. Hence $T_0 =
R_0$ by \cite[Lemma 5.2.12]{Mon}.
\end{proof}

\subsection{The Gelfand-Kirillov dimension of a HOE}
\label{GKdim}

\begin{thrm} Let $R$ be a Hopf $F$-algebra and let
$T = R[x; \sigma, \delta]$ be a HOE of $R$ for which hypothesis
$\mathrm{(H)}$ of Theorem {\rm{\ref{prowinding}}} holds.
Suppose also that $R$ is a finitely generated $F$-algebra. Then
$$\GKdim T=\GKdim R+1.$$
\end{thrm}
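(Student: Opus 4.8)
The plan is to establish $\GKdim T \le \GKdim R + 1$ and $\GKdim T \ge \GKdim R + 1$ separately, working throughout with a finite-dimensional generating subspace $V$ of $R$ with $1 \in V$. Since $T$ is a free left $R$-module on $\{x^i : i \ge 0\}$, the subspace $U := V + Fx$ is a generating subspace of $T$, and $\GKdim T = \limsup_n \log\dim(U^n)/\log n$; I would compute each bound from this.

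For the lower bound, which uses neither $(\mathrm{H})$ nor any special feature of $\sigma$ and $\delta$, I would argue directly from freeness. For $0 \le j \le m$ we have $V^m x^j \subseteq U^{2m}$, and these subspaces lie in the distinct summands $Rx^j$ of $T = \bigoplus_{j \ge 0} Rx^j$, so their sum is direct; as right multiplication by $x^j$ is a left $R$-module isomorphism $R \to Rx^j$, each has dimension $\dim V^m$. Hence $\dim U^{2m} \ge (m+1)\dim V^m$, and letting $m \to \infty$ gives $\GKdim T \ge \GKdim R + 1$.

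The substance is the reverse inequality, and this is where $(\mathrm{H})$ enters: by Theorem \ref{prowinding}(i)(d) the automorphism $\sigma$ is a left winding automorphism $\tau^{\ell}_{\chi}$. The decisive consequence I would extract is that $\sigma$ is \emph{locally finite}. Indeed, if $C$ is a finite-dimensional subcoalgebra of $R$ then $\tau^{\ell}_{\chi}(C) = (\chi \otimes \mathrm{Id})\Delta(C) \subseteq C$, so $\sigma$ restricts to a bijection of $C$; and by the fundamental theorem of coalgebras every finite-dimensional subspace of $R$ lies in a finite-dimensional subcoalgebra. I may therefore enlarge $V$ so that in addition $\sigma(V) = V$, keeping $V$ finite-dimensional, generating, and containing $1$.

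With $\sigma(V) = V$ in hand I would run the usual normalization estimate. Because $V$ generates $R$ and is finite-dimensional, $\delta(V) \subseteq V^p$ for some fixed integer $p$, so $xv = \sigma(v)x + \delta(v) \in Vx + V^p$ for $v \in V$. A short induction on $a$ then gives $x V^a \subseteq V^a x + V^{\,p+a-1}$; writing $E_n := \sum_{a+b \le n} V^a x^b$, this yields $x E_n \subseteq E_{n+p}$, hence $U^n \subseteq E_{np}$. By freeness $\dim E_{np} = \sum_{j=0}^{np} \dim V^{\,np-j} \le (np+1)\dim V^{np}$, so $\dim U^n \le (np+1)\dim V^{np}$, and since passing from $n$ to $np$ leaves the polynomial growth exponent unchanged we obtain $\GKdim T \le \GKdim R + 1$. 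The one point demanding care here is exactly the derivation: a priori $\delta$ need not be locally finite, so $\delta(V)$ may sit in a high power $V^p$ and inflate the $V$-degree of the normalized monomials. This is harmless precisely because $p$ is a single fixed constant, affecting only the coefficient of $\dim V^{np}$ and washing out of $\log(\cdot)/\log n$; what the argument genuinely needs is local finiteness of $\sigma$, not of $\delta$, and that is precisely what the winding-automorphism conclusion of Theorem \ref{prowinding} supplies.
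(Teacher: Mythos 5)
Your proposal is correct and follows essentially the same route as the paper: the one point specific to this setting, namely that hypothesis $\mathrm{(H)}$ forces $\sigma$ to be a winding automorphism and hence locally finite because it preserves every finite-dimensional subcoalgebra, is exactly the paper's argument. The only difference is that where the paper then simply cites \cite[Lemma 2.2]{HK} for the conclusion that a skew polynomial extension by a locally finite automorphism raises GK-dimension by exactly one, you prove that estimate from scratch (lower bound from freeness of $T$ over $R$, upper bound from the normalization $U^n\subseteq E_{np}$ with a $\sigma$-stable generating subspace), and your computation is sound.
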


\begin{proof} Note first that, since $\sigma$ is a winding
automorphism of $R$ by Theorem \ref{prowinding}(i)(d), it is
locally finite dimensional, in the sense that every finite
dimensional subspace of $R$ is contained
in a finite dimensional $\sigma$-stable subspace. This is because
each finite subset of $R$ is contained in a finite dimensional
subcoalgebra $C$ of $R$ by \cite[Theorem 5.1.1]{Mon}, and $\sigma
(C) \subseteq C$ from the definition of a winding automorphism. The
result now follows from \cite[Lemma 2.2]{HK}.
\end{proof}

\subsection{HOEs with $R$ a domain}
\label{tensor}

If $R$ is any Hopf $F$-algebra, note that Theorem \ref{prowinding}(ii)
holds for $R$. Whether the converse is true in this generality is
not known, but we have the following result, which is an immediate
corollary of the previous results of this section, in view of Lemma \ref{anti}.

\begin{cor}
Let $R$ be a Hopf $F$-algebra and a domain,
and let $T = R[x; \sigma, \delta]$ be a HOE of $R$. Then the
conclusions of Theorem {\rm{\ref{prowinding}(i)}}, Proposition
{\rm{\ref{coradical}}}, and, if $R$ is affine, Theorem
{\rm{\ref{GKdim}}}, are valid for $T$.
\end{cor}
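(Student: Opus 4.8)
The plan is to observe that this Corollary is genuinely a routine consequence: the only substantive gap between the hypotheses assumed in the cited results and the hypotheses given here is the verification of condition $\mathrm{(H)}$ of Theorem \ref{prowinding}, and this gap is bridged precisely by Lemma \ref{anti}. So the first step is to invoke Lemma \ref{anti}(i): since $R$ is a domain, that lemma tells us that $S(x) = \alpha x + \beta$ with $\beta \in R$ and $\alpha \in R^\times$, which is exactly hypothesis $\mathrm{(H)}$. (Implicitly one should check that Lemma \ref{anti} applies, i.e.\ that $\Delta(x)$ has the form \eqref{defnHOE3}; but this is built into the definition of a HOE, Definition \ref{HOEdef}, so it comes for free.)

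Once $\mathrm{(H)}$ is established, the three target conclusions follow by direct citation. First, Theorem \ref{prowinding}(i) takes as its standing hypothesis that $T$ is a HOE of $R$ satisfying $\mathrm{(H)}$; both now hold, so all of its conclusions \textbf{(a)}--\textbf{(f)} apply to $T$. Second, for Proposition \ref{coradical} one needs $v = 0$; but this is part of the conclusion of Theorem \ref{prowinding}(i)(a) (equivalently of Lemma \ref{anti}(ii)), which we have just invoked, so Proposition \ref{coradical} applies and gives $T_0 = R_0$. Third, for Theorem \ref{GKdim} one needs $\mathrm{(H)}$ together with $R$ finitely generated as an $F$-algebra; $\mathrm{(H)}$ holds, and affineness is exactly the extra hypothesis imposed in the affine case, so that theorem yields $\GKdim T = \GKdim R + 1$.

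The only point requiring a word of care is the logical ordering: condition $\mathrm{(H)}$ must be secured \emph{before} any of the three results can be quoted, since each of them presupposes either $\mathrm{(H)}$ directly (Theorem \ref{prowinding}(i) and Theorem \ref{GKdim}) or the consequence $v=0$ (Proposition \ref{coradical}) that is itself extracted from $\mathrm{(H)}$ via Lemma \ref{anti}. There is no real obstacle here; the substantive mathematics has all been done in the preceding subsections, and the role of the domain hypothesis is solely to furnish the polynomial-degree argument in Lemma \ref{anti}(i) that pins down $S(x)$ to linear form. The proof is therefore a single line: apply Lemma \ref{anti} to obtain $\mathrm{(H)}$, and then read off the three conclusions from Theorems \ref{prowinding} and \ref{GKdim} and Proposition \ref{coradical}.
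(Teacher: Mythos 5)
Your proposal is correct and matches the paper exactly: the paper states the corollary is ``an immediate corollary of the previous results of this section, in view of Lemma \ref{anti}'', and your write-up simply makes explicit the intended chain (domain $\Rightarrow$ Lemma \ref{anti}(i) $\Rightarrow$ hypothesis (H) $\Rightarrow$ Theorem \ref{prowinding}(i), with $v=0$ then feeding Proposition \ref{coradical} and affineness feeding Theorem \ref{GKdim}). Nothing is missing.
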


It is natural to ask:

\begin{question}
Are the results of the above corollary valid without the hypothesis
that $R$ is a domain? Equivalently, is the domain
hypothesis needed for Lemma \ref{anti}(i)?
\end{question}

While we don't know the answer to the above question, we can at least
deal with the case of particular interest to us in the present paper,
namely the case where $R$ is connected of finite GK-dimension, as we
explain in the next subsection. In view of Lemma 1 of subsection \ref{coHOE}, there is a related question:

\begin{question} If a Hopf $F$-algebra $R$ is a domain, is
$R \otimes R$ also a domain?
\end{question}

While we again don't know the answer to this, we can at least note that
if the answer is positive then this will require the Hopf hypothesis.
For in \cite{RS} Rowen and Saltman exhibit division $k$-algebras $E$
and $F$, finite dimensional over their centres, each centre containing
the algebraically closed field $k$ of characteristic 0, with
$E \otimes_k F$ not a domain; and by a famous result of Cohn
\cite[Corollary 6.1]{Co}, $E$ and $F$ can be embedded in a common
division ring $D$ with $k \subseteq Z(D)$.

\subsection{HOEs of connected Hopf algebras}
\label{skewconn}
If $R$ is a connected Hopf $k$-algebra, then so is $R \otimes R$:
for, if $\{R_i\}$ is the coradical filtration of $R$, then it is
clear from the definition, \cite[Theorem 5.2.2]{Mon}, that $A_n :=
\sum_{i = 0}^n R_i \otimes R_{n-i}$ is a coalgebra filtration of
$R \otimes R$, and hence by \cite[Lemma 5.3.4]{Mon} the coradical
of $R \otimes R$ is contained in $A_0 = k$. Hence $R \otimes R$ is a
domain by \cite[Theorem 6.6]{Zh1}. Similarly, $R\otimes R\otimes R$ 
is a connected Hopf algebra domain. When $R$ is connected, so is 
the associated graded Hopf algebra $\gr H$ with respect to its 
coradical filtration. Therefore $(\gr R)\otimes (\gr R)\otimes (\gr R)$
is a domain by the above argument.

\begin{prop}
Let $k$ be algebraically closed of characteristic 0. Let $R$ be
a connected Hopf $k$-algebra and let $T = R[x; \sigma, \delta]$ be
a Hopf algebra containing $R$ as a Hopf subalgebra. Then
$$\Delta(x)=1\otimes x+x\otimes 1+w$$
for some $w\in R\otimes R$. As a consequence, $T$ is a HOE of $R$
and is a connected Hopf algebra.
\end{prop}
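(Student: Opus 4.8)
The plan is to begin from Lemma~\ref{comulti}. Since $R$ is connected, the discussion immediately preceding the proposition shows that $R \otimes R$ is again a connected Hopf algebra, and hence a domain, so Lemma~\ref{comulti} applies and produces $\Delta(x) = s(1 \otimes x) + t(x \otimes 1) + v(x \otimes x) + w$ with $s,t,v,w \in R \otimes R$. First I would replace $x$ by $x - \varepsilon(x)$, which leaves $T = R[x;\sigma,\delta]$ unchanged and only adjusts $\delta$, so that I may assume $\varepsilon(x) = 0$. Feeding the displayed expression into the counit identities $(\varepsilon \otimes \mathrm{Id})\Delta(x) = x = (\mathrm{Id} \otimes \varepsilon)\Delta(x)$ and using that $T$ is free over $R$ on $\{x^i\}$ then yields the normalisations $(\varepsilon \otimes \mathrm{Id})(s) = (\mathrm{Id} \otimes \varepsilon)(t) = 1$ together with $w \in R^+ \otimes R^+$.

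The real content is to upgrade this to the Hopf Ore form \eqref{defnHOE3}, that is, to prove $s = t = 1 \otimes 1$; once this holds the coproduct reads $\Delta(x) = 1 \otimes x + x \otimes 1 + v(x \otimes x) + w$, which is \eqref{defnHOE3} with $a = b = 1$. To obtain it I would substitute the displayed form into coassociativity $(\Delta \otimes \mathrm{Id})\Delta(x) = (\mathrm{Id} \otimes \Delta)\Delta(x)$ and compare coefficients of the basis monomials $x^i \otimes x^j \otimes x^\ell$ of $T \otimes T \otimes T$ over $R \otimes R \otimes R$, which is a domain (again by the preliminary discussion), so that no leading term can cancel. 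The coefficient of $1 \otimes 1 \otimes x$ gives a relation of the shape $(\Delta \otimes \mathrm{Id})(s) + (\Delta \otimes \mathrm{Id})(v)(w \otimes 1) = (\mathrm{Id} \otimes \Delta)(s)(1 \otimes s)$, the coefficient of $x \otimes 1 \otimes 1$ gives a symmetric relation for $t$, and the top coefficient $x \otimes x \otimes x$ reproduces a cocycle identity for $v$ exactly as in Lemma~\ref{gplike}.

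The main obstacle is to disentangle $s$ and $t$ from the $v,w$-corrections appearing in these relations, and this is precisely where connectedness is indispensable. Because $R \otimes R$ is connected its coradical filtration is conilpotent and its only group-like element is $1 \otimes 1$; since moreover $w \in R^+ \otimes R^+$, the correction term $(\Delta \otimes \mathrm{Id})(v)(w \otimes 1)$ sits in strictly positive coradical degree, so a degree analysis in the coradical filtration of $R \otimes R$ isolates the uncorrected relation $(\Delta \otimes \mathrm{Id})(s) = (\mathrm{Id} \otimes \Delta)(s)(1 \otimes s)$. A direct check shows this forces $s$ to be a group-like element of $R \otimes R$ (necessarily of the form $a \otimes 1$ with $a$ group-like in $R$), and symmetrically for $t$; triviality of group-likes then gives $s = t = 1 \otimes 1$. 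At this point $\Delta(x)$ has the form \eqref{defnHOE3}, so $T$ is a Hopf Ore extension in the sense of Definition~\ref{HOEdef}, and Lemma~\ref{Sx} applies: as $R$ is a domain we get $S(x) = \alpha x + \beta$ with $\alpha \in R^{\times}$, and part~(ii) of that lemma yields $v = 0$. Hence $\Delta(x) = 1 \otimes x + x \otimes 1 + w$ with $w \in R \otimes R$, as required.

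It then remains only to read off the two consequences. The coproduct just obtained is an instance of \eqref{defnHOE3} with $a = b = 1$, so $T$ is a HOE of $R$ and Theorem~\ref{prowinding} applies with $a = 1$. Finally, since $v = 0$, Proposition~\ref{coradical} gives $T_0 = R_0$; as $R$ is connected this says $T_0 = k$, so $T$ is connected, which completes the argument. I expect the comparison-of-coefficients step in paragraph three, and in particular the bookkeeping needed to show that connectedness really does collapse the corrected relations to the group-like condition on $s$ and $t$, to be the technically delicate part; everything else is either routine or already packaged in the cited lemmas.
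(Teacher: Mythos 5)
Your overall strategy --- Lemma \ref{comulti}, then coassociativity analysed through the coradical filtration using that $(\gr R)\otimes(\gr R)\otimes(\gr R)$ is a domain, then the antipode lemma to kill $v$, then Proposition \ref{coradical} for connectedness --- is the same as the paper's, but you have reordered the steps in a way that opens a genuine gap at the centre of the argument. You propose to prove $s=t=1\otimes 1$ first, while $v$ and $w$ are still completely unknown, by ``isolating the uncorrected relation'' from coefficient identities such as
$$(\Delta\otimes 1)(s)+(\Delta\otimes 1)(v)(w\otimes 1)=(1\otimes \Delta)(s)(1\otimes s),$$
on the grounds that $w\in R^+\otimes R^+$ places the correction term in strictly positive coradical degree. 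That justification does not work: whenever $s\ne 1\otimes 1$ the terms involving $s$ also sit in strictly positive degree, and the leading-term argument in the associated graded domain only controls the maximal degree of each side of the identity; two summands on the same side may share that maximal degree and their leading terms may cancel, so you are not entitled simply to delete the $v,w$ term. Nothing in your sketch rules out the correction term carrying the top degree, so the ``uncorrected relation'' is never actually isolated.

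The paper avoids this by pinning down $v$ first. The coefficient of $x\otimes x\otimes x$ yields $(\Delta\otimes 1)(v)(v\otimes 1)=(1\otimes\Delta)(v)(1\otimes v)$, an identity involving $v$ alone, and the degree argument applied to it forces $v=c(1\otimes 1)$ for a scalar $c$. If $c\neq 0$, the correction-free identities coming from the coefficients of $x\otimes x\otimes 1$ and $1\otimes x\otimes x$ collapse to $s=a\otimes 1$ and $t=1\otimes b$, so \eqref{defnHOE3} already holds and Lemma \ref{Sx} (applicable because the connected algebra $R$ is a domain) forces $v=0$, a contradiction; hence $v=0$. Only at that point do the identities for $t$ and $s$ lose their correction terms, and the degree argument then gives $t=s=1\otimes 1$ directly as scalars --- no detour through group-like elements of $R\otimes R$ is needed, and indeed your intermediate claim that the uncorrected relation ``forces $s$ to be group-like'' would itself require the same degree analysis. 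If you reorganise your argument in this order, the remainder of your proposal --- the normalisation $\varepsilon(x)=0$ and $w\in R^+\otimes R^+$, the application of Lemma \ref{Sx} to kill $v$, and Proposition \ref{coradical} for connectedness --- is correct and coincides with the paper's.
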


\begin{proof} By comments before Proposition, 
$R\otimes R$ is a domain, and by Lemma \ref{coHOE}.1,
$$\Delta(x)=s(1\otimes x)+t(x\otimes 1)+v(x\otimes x)+w$$
where $s,t,v,w\in R\otimes R$. Then
$$\begin{aligned}
(\Delta\otimes 1) &\Delta(x)=(\Delta\otimes 1)(s) (1\otimes 1\otimes x)\\
&+(\Delta\otimes 1)(t)\{(s(1\otimes x)+t(x\otimes 1)+v(x\otimes x)+w)
\otimes 1\}\\
&+(\Delta\otimes 1)(v) \{(s(1\otimes x)+t(x\otimes 1)+v(x\otimes x)+w)
\otimes x\}+(\Delta\otimes 1)(w),
\end{aligned}$$
while
$$\begin{aligned}
(1\otimes \Delta) &\Delta(x)=(1\otimes \Delta)(s)(1\otimes
(s(1\otimes x)+t(x\otimes 1)+v(x\otimes x)+w))\\
&+(1\otimes \Delta)(t)(x\otimes 1\otimes 1)\\
&+(1\otimes \Delta)(v)\{x\otimes (s(1\otimes x)+t(x\otimes 1)+v(x\otimes x)+w)\}
+(1\otimes \Delta)(w).
\end{aligned}
$$
Comparing the coefficients of the term $x\otimes x\otimes x$ in the
coassociative equation,
$$(\Delta\otimes 1) \Delta(x)=(1\otimes \Delta) \Delta(x)$$
we obtain
\begin{equation}
\label{addeq1}
(\Delta\otimes 1)(v) (v\otimes 1)=(1\otimes \Delta)(v)(1\otimes v).
\end{equation}
Similarly, by comparing the coefficients in the other terms
(such as $1\otimes 1\otimes 1$, $x\otimes 1\otimes 1$, etc) in the
above coassociative law for $x$, we have a list of equations
\begin{align}
\label{addeq2}
(\Delta\otimes 1)(t) (w\otimes 1)+(\Delta\otimes 1)(w)&=
(1\otimes \Delta)(s)(1\otimes w)+(1\otimes \Delta)(w),\\
\label{addeq3}
(\Delta\otimes 1)(t)(t\otimes 1)&=
(1\otimes \Delta)(t)+(1\otimes \Delta)(v)(1\otimes w),\\
\label{addeq4}
(\Delta\otimes 1)(t) (s\otimes 1)&=
(1\otimes \Delta)(s) (1\otimes t),\\
\label{addeq5}
(\Delta\otimes 1)(s)+(\Delta\otimes 1)(v) (w\otimes 1)&=
(1\otimes \Delta)(s) (1\otimes s),\\
\label{addeq6}
(\Delta\otimes 1)(t) (v\otimes 1)&=(1\otimes \Delta)(v) (1\otimes t),\\
\label{addeq7}
(\Delta\otimes 1)(v) (t\otimes 1)&=
(1\otimes \Delta)(v) (1\otimes s),\\
\label{addeq8}
(\Delta\otimes 1)(v) (s\otimes 1)&=(1\otimes \Delta)(s) (1\otimes v).
\end{align}

We claim that the equation \eqref{addeq1} implies that
$v$ is $c 1\otimes 1$ for some scalar $c\in k$. Let $R_n$ be the
$n$th term in the coradical filtration on $R$. We define an
${\mathbb N}^3$-filtration on $R\otimes R\otimes R$ by
$$F_{l,m,n}=R_{l}\otimes R_{m}\otimes R_{n}$$
for all $l,m,n\in {\mathbb N}$. Note that ${\mathbb N}^3$ is an ordered
semigroup with an total ordering defined as follows:

\begin{enumerate}
\item[]
$(l_1,m_1,n_1)<(l_2,m_2,n_2)$ if and only if either
$l_1+m_1+n_1<l_2+m_2+n_2$, or $l_1+m_1+n_1=l_2+m_2+n_2$ and
$l_1<l_2$, or $l_1+m_1+n_1=l_2+m_2+n_2$ and
$l_1=l_2$ and $m_1<m_2$.
\end{enumerate}

\noindent
Define $F_{\leq (l,m,n)}=\sum_{(l',m',n')\leq (l,m,n)} F_{l',m',n'}$. Then
$\{F_{\leq (l,m,n)}\mid (l,m,n)\in {\mathbb N}^3\}$ is a filtration
such that the associated graded ring is $(\gr R)\otimes (\gr R)
\otimes (\gr R)$, which is a domain (see comments before 
Proposition). Similarly,
we can define a filtration on $R\otimes R$. Using these filtrations,
the degree of elements in $R^{\otimes d}$, for $d=1,2,3$, is well-defined.
If $f\in R$ has degree $a$, then $\deg \Delta(f)= (a,0)$, by \cite[Theorem 5.2.2(2)]{Mon} and the counit axiom.
Similarly, if $\deg f=(a,b)$ for some $f\in R\otimes R$, then $\deg
(\Delta \otimes 1)(f)=(a,0,b)$. Let $\deg v=(m,n)$, and apply
$\deg $ to the equation \eqref{addeq1}. Using the fact that $(\gr R)\otimes (\gr R)
\otimes (\gr R)$ is a domain, we obtain
$$(m,0,n)+(m,n,0)=(m,n,0)+(0,m,n)$$
which implies that $m=0$. So, since $R_0 = k$, $v=1\otimes f$ for some $f\in R$. By
symmetry, $v=g\otimes 1$ for some $g\in R$. Hence $v=c 1\otimes 1$
for some $c\in k$ as desired.

Suppose that $v\neq 0$; that is, that $c \in k \setminus\{0\}$. Then \eqref{addeq8} is
equivalent to $s\otimes 1=(1\otimes \Delta)(s)$, which
implies that $s=a\otimes 1$ for some $a\in R$. Similarly,
\eqref{addeq6} implies that $ t=1\otimes b$ for some
$b\in R$. Therefore \eqref{defnHOE3} holds and $T$ is a HOE of $R$.
Since $R$ is a domain, by Lemma  \ref{anti}
and Theorem {\rm{\ref{prowinding}(i)}}, $v=0$, a contradiction.
Therefore $v=0$.

With $v=0$, \eqref{addeq3} becomes
$$(\Delta\otimes 1)(t) (t\otimes 1)=(1\otimes \Delta)(t).$$
Let $\deg t=(m,n)$. Considering degrees in the above
equation, we have
$$(m,0,n)+(m,n,0)=(m,n,0)$$
which implies that $m=n=0$. So $t=d 1\otimes 1$ for some $d=d^2\in k$.
Since $d\neq 0$ by the counit axiom, $d=1$ and $t=1\otimes 1$.
By symmetry, $s=1\otimes 1$. The result follows.

By Proposition \ref{coradical}, $T_0=R_0=k$, so $T$ is connected.
\end{proof}

The following corollary is immediate.

\begin{cor}
Let $k$ be algebraically closed of characteristic 0, and $R$
a connected Hopf $k$-algebra and let $T = R[x; \sigma, \delta]$ be
a Hopf algebra containing $R$ as a Hopf subalgebra. Then the
conclusions of Theorem {\rm{\ref{prowinding}(i)}},
Proposition {\rm{\ref{coradical}}} and
Theorem {\rm{\ref{GKdim}}}, are valid for $T$.
\end{cor}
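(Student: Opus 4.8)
The plan is to assemble the corollary directly from the three results it cites, the only real content being the verification that each of their hypotheses is automatically met when $R$ is connected. The engine is the immediately preceding Proposition \ref{skewconn}: applied to $T = R[x;\sigma,\delta]$ it already gives $\Delta(x) = 1\otimes x + x\otimes 1 + w$ for some $w \in R\otimes R$, so that $T$ is a HOE of $R$ in the sense of Definition \ref{HOEdef}, with $a = b = 1$ and $v = 0$, and moreover that $T$ is connected. So most of the work is done; what remains is to activate the structural results of $\S\ref{HopfOre}$ that were proved only under the hypothesis (H) or a domain assumption.

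First I would record that $R$, being a connected Hopf $k$-algebra with $k$ algebraically closed of characteristic $0$, is a domain: this is precisely the input from \cite[Theorem 6.6]{Zh1} already invoked in the remarks preceding Proposition \ref{skewconn} (there applied to $R\otimes R$, which is connected by the same coradical-filtration argument). With $R$ a domain, Lemma \ref{anti}(i) applies to $T$ and yields $S(x) = \alpha x + \beta$ with $\beta \in R$ and $\alpha \in R^{\times}$; that is, hypothesis (H) of Theorem \ref{prowinding} holds. Consequently all the conclusions of Theorem \ref{prowinding}(i) are valid for $T$ (with $a = 1$, as in Theorem \ref{coprod}).

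Next I would note that, since $v = 0$ and $T$ is a HOE of $R$, Proposition \ref{coradical} applies verbatim and gives $T_0 = R_0 = k$, reconfirming the connectedness already supplied by Proposition \ref{skewconn}. Finally, provided $R$ is in addition finitely generated as a $k$-algebra, hypothesis (H) being in force means Theorem \ref{GKdim} applies and gives $\GKdim T = \GKdim R + 1$; this affineness is the only extra assumption needed for the GK-dimension clause, exactly as in the qualifier ``if $R$ is affine'' of Corollary \ref{tensor}.

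I expect no genuine obstacle here, which is why the statement is labelled immediate: the single point requiring attention is that the domain hypothesis launching Lemma \ref{anti}(i) — and hence everything downstream — comes for free from connectedness, rather than being a separate assumption on $R$. Put differently, connectedness simultaneously supplies the coproduct normalisation (via Proposition \ref{skewconn}) and the domain property (via \cite{Zh1}) that the general results of this section demand, so the corollary is obtained by citation alone once these two facts are in hand.
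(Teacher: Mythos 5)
Your argument is correct and is exactly the route the paper intends (the paper simply declares the corollary ``immediate''): Proposition \ref{skewconn} supplies the coproduct normalisation making $T$ a HOE with $v=0$, connectedness of $R$ gives the domain property via \cite[Theorem 6.6]{Zh1} so that Lemma \ref{anti}(i) yields hypothesis (H), and the three cited results then apply. Your explicit flagging of the affineness needed for the Theorem \ref{GKdim} clause is, if anything, slightly more careful than the paper's own statement.
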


\section{Iterated Hopf Ore extensions} \label{iterOre}
\subsection{Definition and first examples}\label{iterdefn}

By Proposition \ref{skewconn}, \eqref{defnHOE3} is automatic for $T:=R[x;\sigma,\delta]$
when $R$ is connected, and in this case $T$ is connected. So we can make the following
definition without mentioning requirement \eqref{defnHOE3}. 

\begin{defn} An \emph{iterated Hopf Ore extension of $k$} (IHOE) is a
Hopf $k$-algebra \begin{equation} \label{IHOEchain} H =
k[X_1][X_2; \sigma_2 , \delta_2] \dots [X_n; \sigma_n , \delta_n],
\end{equation} where
\begin{itemize}
\item
$H$ is a Hopf $k$-algebra;
\item
$H_{(i)} := k\langle X_1,
\ldots , X_i \rangle$ is a Hopf subalgebra of $H$ for $i = 1,
\ldots , n;$
\item
$\sigma_i$ is an algebra automorphism of
$H_{(i-1)}$, and $\delta_i$ is a $\sigma_i$-derivation of
$H_{(i-1)}$, for $i = 2, \ldots , n$.
\end{itemize}
\end{defn}

\begin{exs}\label{IHex}
\begin{enumerate}
\item Let $\mathcal{O}(G)$ be the coordinate ring
  of the $k$-affine algebraic group $G$. Recall that $G$ is
\emph{unipotent} if, for all $g \in G$, $g-1$ acts nilpotently on
all finite dimensional (rational) representations. The following
are equivalent.
  \begin{enumerate}
  \item[(a)] $\mathcal{O}(G)$ is an IHOE;
  \item[(b)] $\mathcal{O}(G)$ is a polynomial $k$-algebra;
  \item[(c)] $\mathcal{O}(G)$ is a connected Hopf algebra;
\item[(d)] $G$ is unipotent.
\end{enumerate}
In this case $n = \mathrm{dim}G$.

Here, $(a) \Rightarrow (b)$ is clear,and  $(b) \Rightarrow (d)$ is
a theorem of Lazard, \cite{Laz}, see also \cite{KP}. $(d) \Rightarrow (a)$
follows since a unipotent group in characteristic 0 is a subgroup
of the group of strictly upper triangular $n \times n$ matrices,
for some $n$, \cite[Corollary 17.5]{Hu}, and so has a finite normal
(even central) series with successive factors isomorphic to
$(k,+)$. That these equivalent conditions imply (c) is a well-known
fact in the theory of algebraic groups, namely, that the trivial
module is the only simple rational $G$-module when $G$ is unipotent.
But it is in any case a consequence of Proposition \ref{coradHOE}.
Finally, if (c) holds, then by duality the trivial module is the
unique rational simple $G$-representation, from which it follows
from the definition of a unipotent group that $G$ is unipotent.
\item
The enveloping algebra $U(\mathfrak{g})$ of the finite
dimensional solvable Lie algebra $\mathfrak{g}$ is an IHOE, with
$n = \mathrm{dim}_k(\mathfrak{g})$ and $\sigma_i = \mathrm{id}$
for all $i$. This follows from the fact that $\mathfrak{g}$ has a
chain of ideals $\mathfrak{g}_i,\, 0 \leq i \leq n,$, with
$\mathfrak{g}_i \subset \mathfrak{g}_{i+1}$ and
$\mathrm{dim}_k(\mathfrak{g}_i)= i,$ for all $i$, \cite[1.3.14]{Di}.
\item
Let $\mathfrak{g} = \mathfrak{sl}(2,k) = ke \oplus kh \oplus
kf$, with the standard relations. Then $H := U(\mathfrak{g})$ is
an IHOE,
$$ H = k[h][e;\sigma_2][f;\sigma_3, \delta_3], $$
with $\sigma_2 (h) = h-2,$ $\sigma_3 (e) = e$ and $\sigma_3 (h) =
h+2,$ and $\delta_3$ mapping $h$ to 0 and $e$ to $-h$.
\item
If $\mathfrak{g}$ is a semisimple Lie $k$-algebra with a
simple factor not isomorphic to $\mathfrak{sl}(2,k)$, then
$U(\mathfrak{g})$ is \emph{not} an IHOE. To see this, note that
the Hopf subalgebras of $U(\mathfrak{g})$ are the enveloping
algebras of the Lie subalgebras of $\mathfrak{g}$, and there are
insufficient of these to form a full flag in $\mathfrak{g}$.
\end{enumerate}
\end{exs}

\subsection{First properties of IHOEs}
\label{firstprop}

Clearly, all the results of \S \ref{HopfOre} can be applied to
IHOEs, passing inductively up the chain (\ref{IHOEchain}). Note
that the issue of $\S$\ref{tensor}, whether $R \otimes R$ is a
domain, is easily resolved for an IHOE $R$, since $R \otimes R$ is
again an IHOE. Let us write $P(H)$ to denote the space of
primitive elements of the Hopf algebra $H$. Definitions and
background for the terminology introduced in (v) and (vii) of
the following result can be found in many places, see for
example \cite{BZ}.

\begin{thrm}
Let $H$ be an IHOE with defining chain
\eqref{IHOEchain}.
\begin{enumerate}
\item
$H$ is a connected Hopf algebra with, for each $i = 1, \ldots , n,$
\begin{equation} \label{hitch}
\Delta (x_i) = x_i \otimes 1 + 1 \otimes x_i +
w^{i-1},
\end{equation}
where $w^{i-1} \in H_{(i-1)} \otimes
H_{(i-1)}$, for $i = 1, \ldots , n$, with $w^0 = w^1 = 0$. After
changes of the defining variables $\{x_i\}$ but not of the chain
\eqref{IHOEchain}, the data $\{x_i,\sigma_j, \delta_j, w^{i-1} :
2 \leq j \leq n,\, 1 \leq i \leq n \}$
satisfies {\rm{(}}appropriate formulations of{\rm{)}} the conditions
listed in Theorem {\rm{\ref{prowinding}}}, with $a = 1.$
\item
$H$ is a noetherian domain of GK-dimension $n$.
\item
After further variable changes {\rm{(}}not affecting the validity of
{\rm{(i))}}, the Lie algebra $P(H)$ is contained in the space
$\sum_{i=1}^n k x_i,$ with equality if and only if $H$ is
isomorphic as a Hopf algebra to the enveloping algebra $U(P(H))$.
\item The associated graded Hopf algebra of $H$ with respect to the
coradical filtration is a commutative polynomial algebra in $n$
variables. \item $H$ is Auslander-regular and AS-regular of
dimension $n$, and is GK-Cohen-Macaulay.
\item
$H$ has Krull dimension at most $n$. \item $H$ is skew Calabi-Yau with Nakayama
automorphism $\nu$, where, for $i = 1, \ldots , n$,
$$ \nu (x_i) = x_i + a_i,$$
with $a_i \in H_{(i-1)}.$ If $x_i \in P(H)$ then $a_i \in k.$
\item
The antipode $S$ of $H$ either has infinite order, or $S^2 = \mathrm{Id}$.
\item
$S^4 = \tau^{\ell}_{\chi}\circ \tau^r_{-\chi}$, the composition of a
left winding automorphism of $H$ with the right winding automorphism
of the inverse character. The character $\chi$ belongs to the centre
of the character group $X(H)$. In particular, $S^4$ is a unipotent
automorphism of a generating finite dimensional subcoalgebra of $H$.
\end{enumerate}
\end{thrm}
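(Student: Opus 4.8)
The plan is to prove the nine assertions about an IHOE $H$ by systematically leveraging the inductive structure of the chain \eqref{IHOEchain} and invoking the single-step results of $\S\ref{HopfOre}$ at each link. The master tool is Proposition \ref{skewconn}: since $H_{(0)} = k$ is connected, an easy induction shows every $H_{(i)}$ is a connected Hopf algebra, and at each stage the proposition forces $\Delta(x_i) = x_i \otimes 1 + 1 \otimes x_i + w^{i-1}$ with $w^{i-1} \in H_{(i-1)} \otimes H_{(i-1)}$ and $a=1$, giving (i); the vanishing $w^0 = w^1 = 0$ comes from $H_{(1)} = k[x_1]$ being a polynomial Hopf algebra on a necessarily primitive generator. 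Once connectedness is in hand, (ii) follows because each link $H_{(i-1)} \subset H_{(i)}$ is a HOE with $H_{(i-1)}$ a finitely generated domain: skew polynomial extensions preserve the noetherian and domain properties, and Theorem \ref{GKdim} adds one to the GK-dimension at each of the $n$ steps.

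For (iv) I would cite the central theorem of \cite{Zh1}: for any connected Hopf $k$-algebra of finite GK-dimension $n$, the coradical-graded algebra $\gr H$ is a commutative polynomial Hopf algebra in $n$ variables. Since (i) and (ii) establish that $H$ is exactly such an algebra, (iv) is immediate. Statement (iii) then rests on analysing $P(H)$: because $\gr H$ is a polynomial algebra and the $x_i$ can be taken to lie in successive coradical layers, a change of variables (not disturbing (i)) can be arranged so that each $x_i$ is congruent to a primitive element modulo $H_{(i-1)}$, placing $P(H)$ inside $\sum_i k x_i$; equality holds precisely when all the $w^{i-1}$ vanish, which is the statement that every $x_i$ is primitive, i.e. that $H = U(P(H))$.

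The structural homological assertions (v), (vi), (vii) I would obtain by induction along the chain using known preservation results for Ore extensions: Auslander-regularity, AS-regularity, the GK-Cohen-Macaulay property, and the bound on Krull dimension are all stable under the formation of $R[x;\sigma,\delta]$, raising global/injective dimension by one at each step so that $H$ has the stated dimension $n$. For the skew Calabi-Yau property and the Nakayama automorphism $\nu$ in (vii), the plan is to compute $\nu$ inductively: an Ore extension of a skew Calabi-Yau algebra is again skew Calabi-Yau, and tracking how the Nakayama automorphism extends across each link yields $\nu(x_i) = x_i + a_i$ with $a_i \in H_{(i-1)}$; when $x_i$ is primitive the correction term collapses into $k$ by degree considerations in the coradical filtration.

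The last two statements concern the antipode, and I expect (ix) to be the main obstacle. For (viii), Theorem \ref{prowinding}(i)(c) gives $S(x_i) = -(x_i + \sum w_1^{i-1} S(w_2^{i-1}))$ when $a=1$, so $S$ acts on each generator as $-\mathrm{Id}$ plus a lower term; iterating $S$ shows $S^2$ fixes each $x_i$ modulo lower terms, and a careful degree argument forces either $S^2 = \mathrm{Id}$ or infinite order, with no intermediate finite order possible. For (ix), the hard part is identifying $S^4$ with the specific composite $\tau^\ell_\chi \circ \tau^r_{-\chi}$ of winding automorphisms: I would use the general Hopf-algebraic identity expressing $S^4$ via the modular data (the distinguished group-like and its functional counterpart), show that for a connected Hopf algebra this data is governed by a single character $\chi$, and then verify $\chi$ is central in $X(H)$ using that $X(H)$ is unipotent (as noted in $\S\ref{maxclassic}$). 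Unipotence also delivers the final clause: a winding automorphism attached to a character of a unipotent character group acts unipotently on any finite dimensional generating subcoalgebra, so $S^4$ is unipotent there. The delicate point throughout (ix) is keeping the left/right winding bookkeeping consistent with the conventions fixed in Theorem \ref{prowinding}(i)(d), and confirming centrality of $\chi$ rather than merely its existence.
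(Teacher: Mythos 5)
Your overall architecture matches the paper's for (i), (ii), (iv) and (viii), and your route to (v)--(vii) via Ore-extension preservation theorems is a legitimate alternative to the paper's, which instead deduces the homological properties by filtered--graded transfer from (iv) (the coradical-graded algebra being a commutative polynomial ring, citing \cite[Corollary 6.10]{Zh1}) and gets the skew Calabi--Yau statement from \cite[Proposition 4.5]{BZ} rather than by pushing a Nakayama automorphism up the chain. However, there are two genuine gaps.

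First, in (ix) you propose to ``verify $\chi$ is central in $X(H)$ using that $X(H)$ is unipotent.'' Unipotence does not yield centrality: $X(H)$ can be a nonabelian unipotent group (the Heisenberg group already occurs among IHOEs), and a given character need not be central in it. The paper's argument is different: $I(H)$ is a Hopf ideal preserved by $S^4$, so $S^4$ descends to the fourth power of the antipode of the \emph{commutative} Hopf algebra $H/I(H)\cong\mathcal{O}(X(H))$, which is the identity by \cite[Corollary 1.5.12]{Mon}; since $\tau^{\ell}_{\chi}\circ\tau^{r}_{-\chi}$ acts on $\mathcal{O}(X(H))$ as conjugation by $\chi$, this forces $\chi$ to be central. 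Unipotence enters only afterwards, to show the winding actions generate a unipotent subgroup of $GL(V)$ for a generating finite-dimensional subcoalgebra $V$.

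Second, in (iii) your derivation of the containment $P(H)\subseteq\sum_i kx_i$ from ``$\gr H$ is a polynomial algebra and the $x_i$ lie in successive coradical layers'' does not do the required work. The substantive point is to show that a primitive element $w$ of $H=T[x;\sigma,\delta]$ not lying in $T=H_{(n-1)}$ must have degree exactly $1$ in $x$ with scalar leading coefficient; the paper proves this by writing $w=\sum_{i=0}^m t_ix^i$, expanding $\Delta(w)$ using \eqref{hitch} and \cite[Lemma 5.3.2]{Mon}, and comparing the terms $x^m\otimes t_m$ and $mx^{m-1}\otimes x$ against the form $\Delta(w)=w\otimes 1+1\otimes w$ to force $t_m\in k$ and $m=1$, after which one replaces $x$ by $w$ and adjusts $\delta$ by an inner $\sigma$-derivation. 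Without some such degree analysis your containment is unsupported; and your claim that equality $P(H)=\sum_i kx_i$ is equivalent to the vanishing of the given $w^{i-1}$ also needs care in the direction starting from $H\cong U(P(H))$, where the paper must first re-realise $H$ as an IHOE on a basis of $P(H)$ by a further inductive variable change before the $w^{i-1}$ can be taken to vanish.
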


\begin{proof}
(i) This follows by induction on $n$, using Proposition \ref{skewconn}
to handle the induction step.

(ii) That $H$ is a noetherian domain is immediate from basic properties
of Ore extensions, \cite[Theorem 1.2.9]{MR}. In view of (i),
Theorem \ref{GKdim} applies at every step of the defining chain
(\ref{IHOEchain}) for $H$, and shows that $\mathrm{GKdim}H = n.$

(iii) We argue by induction on the GK-dimension $n$ of $H$, the
case $n =0$ being trivial. Suppose that $n \geq 1$, so that
$$ H = T[x; \sigma, \delta], $$
where $T$ is an IHOE of GK-dimension $n-1$ that is constructed using the
variables $x_1, \ldots ,x_{n-1}$, with
$$ T_1 = H_1 \cap T \subseteq \sum_{i=1}^{n-1}k x_i. $$
If $H_1 \subseteq T$ then there is nothing to prove. So, suppose that
$H_1$ is not contained in $T$ and choose
$w \in \sum_{i=0}^m t_i x^i \in H_1 \setminus T$, with
$m := \mathrm{deg} w$ minimal. Let $t_m \in T_{\ell}$, and suppose
that $\ell \geq 1$. By \cite[Lemma 5.3.2]{Mon},
$$ \Delta (t_m) = t_m \otimes 1 + 1 \otimes t_m + \gamma, $$
for an element $\gamma \in T_{\ell - 1} \otimes T_{\ell -1}.$

Now, by (\ref{hitch}), for some $\eta \in T \otimes T$,
\begin{align*} \Delta (w) &= \sum_{i=0}^m\Delta(t_i)\Delta (x)^i \\
&= \sum_{i=0}^m\Delta(t_i)(x \otimes 1 + 1 \otimes x + \eta)^i \\
&= (t_m \otimes 1 + 1 \otimes t_m + \gamma)
(x \otimes 1 + 1 \otimes x + \eta)^m + \beta \\
&= t_m x^m \otimes 1 + 1 \otimes t_m x^m + x^m \otimes t_m
+ t_m \otimes x^m + \mu,
\end{align*}
where $\beta , \mu \in H \otimes H$ do not involve any right or
left tensorands of degree in $x$ greater than or equal to $m$.

But $w \in P(H)$, so
\begin{equation} \label{hip}
\Delta (w) = t_m x^m \otimes 1 + 1 \otimes t_m x^m +
\sum_{i=0}^{m-1}t_i x^i \otimes 1 + \sum_{i=0}^{m-1}1 \otimes t_i x^i.
\end{equation}
The two expressions for $\Delta (w)$ cannot be reconciled, so we
conclude that $t_m \in k$. Therefore, without loss of generality,
$ t_m = 1.$ Suppose now that $m > 1.$ Then
\begin{align*}
\Delta (w) &= \Delta (x)^m +  \sum_{i=0}^{m-1}\Delta(t_i)\Delta (x)^i \\
&= x^m \otimes 1 + 1 \otimes x^m + mx^{m-1} \otimes x + \zeta,
\end{align*}
where $\zeta \in H \otimes H$ involves no terms in $x^{m-1} \otimes x$.
Comparing this with (\ref{hip}) we are forced to conclude that $m =1 $.
That is,
$$ w = x + t,$$
for some element $t$ of $T$.

As is well-known, we can now change the $n$th variable in the IHOE
$H$ from $x$ to $w$, replacing $\delta$ by $\hat{\delta} := \delta
+ [t_{\sigma}, - ]$, where $[t_{\sigma}, r] := tr - \sigma (r)t$ for
$r \in R.$ Then $\hat{\delta}$ is a $\sigma$-derivation of $T$ and
$H = T[w; \sigma, \hat{\delta}]$, so that the induction step is proved.
This proves the first claim in (iii).

To prove the second statement in (iii), suppose first that all the
$x_i$ are primitive, then $H$, being generated by these elements, is
cocommutative. The only units of an iterated skew polynomial algebra
being scalars, $G(H) = \{1\}$. Since we are in characteristic 0, a
special case of the Cartier-Gabriel-Kostant theorem,
\cite[Theorem 5.6.5]{Mon}, ensures that $H$ is the enveloping algebra
of the Lie algebra $P(H) = \sum{i=1}^n kx_i$. Conversely, suppose
that $H$ is an IHOE which is isomorphic as Hopf algebra to the
enveloping algebra of the Lie algebra $\mathfrak{g}$. We claim that
$H$ can be realised as an IHOE with a basis of $\mathfrak{g}$ as the
set of skew polynomial generators of $H$. Argue by induction on
$n := \mathrm{GKdim} H = \mathrm{dim}_k \mathfrak{g}$, the second
equality holding by \cite[Example 6.9]{KL}. By hypothesis,
$H = T[x;\sigma,\delta]$ for an IHOE $T$, with $\mathrm{GKdim} T = n-1$,
by Corollary (\ref{skewconn}). Since Hopf subalgebras of enveloping
algebras are enveloping algebras (by, for example,
\cite[Theorem 5.6.5]{Mon} again), $T \cong U(\mathfrak{h})$ for an
$n-1$-dimensional Lie subalgebra of $\mathfrak{g}$. By induction,
$T$ can be realised as an IHOE with a basis of $\mathfrak{h}$ as skew
polynomial generating set. By \cite[Proposition 5.5.3(2)]{Mon},
$$ P(T) = \mathfrak{h} \subsetneq \mathfrak{g} = P(H).$$
The argument from the first part of the proof of (iii) can now be
used to show that there exists $t \in T$ with $x-t \in P(H)$; and
as before, $H = T[x-t;\sigma, \hat{\delta}]$, as required.

(iv) Immediate from (i),(ii) and \cite[Theorem 6.9]{Zh1}.

(v),(vi) This all follows by standard techniques from
(iv) - see \cite[Corollary 6.10]{Zh1}.

(vii) By \cite[Proposition 4.5]{BZ} and (v), $H$ is skew Calabi-Yau
with Nakayama automorphism $\nu = S^2\tau$, where $\tau$ is a certain
left winding automorphism of $H$. For $i = 1, \ldots , n$, that
$\tau(x_i)$ and $S^2 (x_i)$ have the form ascribed to $\nu$ in (vii)
follows from (\ref{hitch}) and from Theorem \ref{permit}(i)(c)
respectively. Since $H_{(i-1)}$ is a Hopf subalgebra of $H$, the
composition of these maps also has the desired form. The second
claim is clear from the formulae for $S^2$ and $\tau$ when $w^{i-1} = 0.$

(viii) Suppose that $|S| < \infty$. We show that $S^2 = \mathrm{Id}$,
arguing by induction on $n = \mathrm{GKdim}H.$ The case $n = 0$ being
trivial, we have $H = R[x;\sigma,\delta],$ where $R$ is an IHOE of
GK-dimension $n-1$, whose antipode $S_{|R}$ has finite order, and
hence satisfies $S_{|R}^2 = \mathrm{Id}$ by the induction hypothesis.
By Theorem \ref{permit}(i)(c), there exists $r \in R$ such that
$S(x) = -x + r$. Setting $a := S(r) - r \in R$, we find
\begin{equation}\label{near} S^2(x) = x + a.\end{equation}
Continuing, we easily calculate that, for every $m \geq 1$,
\begin{equation}\label{done} S^{2m} (x) = x + ma. \end{equation}
But $S$ is supposed to have finite order, and $k$ has characteristic 0,
so (\ref{done}) is only possible if $a = 0.$ By (\ref{near})
$$S^2(x) = x,$$
and the induction is proved.

(ix) Given the AS-regularity of $H$ from (v), the first claim follows
from \cite[Corollary 4.6]{BZ}, in view of the fact that $H$ has no
non-trivial inner automorphisms, by (ii). (The character $\chi$ is
given by the right action of $H$ on the left homological integral
$\mathrm{Ext}^n_H(k,H)$.) Let $I(H) = \langle [H,H] \rangle$. As
discussed in detail in $\S$\ref{classic} below, the maximal commutative
factor $H/I(H)$ of $H$ is a Hopf algebra. Since $H/I(H)$ is affine and
commutative, it is the coordinate ring of an affine algebraic $k$-group
$X(H)$, the character group of $H$. Since $I(H)$ is a Hopf ideal of $H$
it is fixed by $S^4$, and $S^4$ then induces on $H/I(H)$ the fourth
power of the antipode of $H/I(H)$. But the square of the antipode of a
commutative Hopf algebra is the identity, \cite[Corollary 1.5.12]{Mon},
$S^4$ induces the identity on $H/I(H)$. The centrality of $\chi$ is
immediate.

Being a factor of the connected algebra $H$, $H/I(H)$ is itself
connected by \cite[Corollary 5.3.5]{Mon}. Thus, by Examples
\ref{iterdefn}(i), $X(H)$ is unipotent. Let $V$ be a finite
dimensional generating coalgebra of $H$. Since the maps $\tau^{\ell}$
and $\tau^r$ from $X(H)$ to $GL(V)$ yield commuting rational actions
of $X(H)$ on $V$, $\langle \tau^{\ell}(X(H)),\tau^r (X(H)) \rangle$
is a unipotent subgroup of $GL(V)$, as claimed.
\end{proof}

\begin{remarks}
\begin{enumerate}
\item
Both alternatives in (viii) can occur: see \ref{smallexp}(iii) for
an example with $|S| = \infty.$
\item
From the second part of (ix) of the theorem and the fact that
unipotent groups in characteristic 0 are torsion free
(Examples \ref{iterdefn}(i)), we can directly deduce a weaker
version of (viii): $S^4$ is either the identity or has infinite order.
\item
The inequality in (vi) can be strict: the Krull dimension of
$U(\mathfrak{sl}(2,k)$ is 2, by \cite{Le}.
\item
Further information about the character group $U$ appearing in
(ix) is obtained in $\S$\ref{classicsec}.
\end{enumerate}
\end{remarks}

\subsection{IHOEs of small dimension}
\label{small}
If $H$ is a connected Hopf $k$-algebra of finite GK-dimension, then
\begin{equation}\label{GKineq}
\mathrm{min}\{2, \mathrm{GKdim}H\} \leq \mathrm{dim}_k P(H) \leq
\mathrm{GKdim}H,
\end{equation}
by \cite[Lemma 5.11]{Zh2}. Moreover, if $K$ is a proper Hopf
subalgebra of $H$ then $\mathrm{GKdim}K < \mathrm{GKdim} H$, by
\cite[Lemma 7.2]{Zh1}. Parts (i), (ii) and (iii) of the following
theorem follow easily from these facts. The connected Hopf
$k$-algebras of GK-dimension 3
[resp. 4] have been classified by Zhuang \cite{Zh1}
[resp. by Wang, Zhang and Zhuang
\cite{WZZ}]. In view of Theorem \ref{firstprop}(i)(ii), these
classifications yield all IHOEs of dimension at most four.

We summarise the picture in the following theorem, summarising
results from \cite{Zh1} and \cite{WZZ}. The terminology and
notation appearing in (iv) is explained in $\S$\ref{smallexp},
after the theorem. For the definitions of the terms used in
(v)(b) and (v)(c), see \cite{WZZ}.

\begin{thrm} Let $H$ be an IHOE.
\begin{enumerate}
\item
If $\mathrm{dim}_k H$ is finite, then $H = k$.
\item
If $\mathrm{GKdim}H= 1$ then $H = k[x].$
\item
If $\mathrm{GKdim}H= 2$ then $H$ is the enveloping algebra of one of
the two Lie algebras of dimension 2.
\item
Suppose that $\mathrm{GKdim} H = 3.$ Then $H$ is isomorphic as a
Hopf algebra to one (and only one) of the following:
\begin{enumerate}
\item[(a)]
the enveloping algebra of a three-dimensional Lie algebra;
\item[(b)]
the algebras $A(0,0,0), A(0,0,1), A(1,1,1),
A(1,\lambda,0)$, $\lambda \in k$;
\item[(c)]
the algebras $B(\lambda)$, $\lambda \in k.$
\end{enumerate}
\item
Suppose that $\mathrm{GKdim} H = 4.$ Then $H$ is isomorphic
as a Hopf algebra to one (and only one) of the following:
\begin{enumerate}
\item[(a)]
the enveloping algebra of a four-dimensional Lie algebra;
\item[(b)]
the enveloping algebra $U(\mathfrak{g})$ of an anti-commutative
coassociative Lie algebra $\mathfrak{g}$ of dimension 4;
\item [(c)]
a primitively-thin Hopf algebra of GK-dimension 4.
\end{enumerate}
\end{enumerate}
In particular, every connected Hopf $k$-algebra of GK-dimension at
most 4 is an IHOE.
\end{thrm}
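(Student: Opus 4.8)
The plan is to reduce everything to the structural results of Theorem \ref{firstprop} together with the classifications of \cite{Zh1} and \cite{WZZ}. Throughout I write $n = \GKdim H$; by Theorem \ref{firstprop}(i),(ii), $H$ is a connected Hopf algebra and $n$ equals the length of a defining chain \eqref{IHOEchain}. Parts (i)--(iii) then fall out quickly. For (i), a finite-dimensional algebra has GK-dimension $0$, so $n = 0$ and the chain collapses to $H = k$. For (ii), $n = 1$ forces $H = k[X_1]$, a polynomial ring in one variable. For (iii), I would use the inequality \eqref{GKineq}, which for $n = 2$ gives $\dim_k P(H) = 2$. The subalgebra generated by the primitive elements is a connected cocommutative Hopf subalgebra whose primitive space is exactly $P(H)$, so by the Cartier--Gabriel--Kostant theorem \cite[Theorem 5.6.5]{Mon} it is isomorphic to $U(P(H))$, of GK-dimension $\dim_k P(H) = 2$. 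Since a proper Hopf subalgebra has strictly smaller GK-dimension by \cite[Lemma 7.2]{Zh1}, this subalgebra must be all of $H$; hence $H \cong U(P(H))$, and there are exactly two Lie algebras of dimension $2$ up to isomorphism.

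For (iv) and (v) I would invoke the classifications directly. Since $H$ is a connected Hopf $k$-algebra of GK-dimension $3$ (resp. $4$), Zhuang's classification \cite{Zh1} (resp. that of Wang, Zhang and Zhuang \cite{WZZ}) shows that $H$ is isomorphic to exactly one of the algebras listed in (iv) (resp. (v)); the uniqueness assertion is part of those classification theorems, so nothing further is needed for the forward direction of these parts.

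The remaining and substantive point is the converse required for the final assertion: that \emph{every} connected Hopf $k$-algebra of GK-dimension at most $4$ is in fact an IHOE, i.e. that each algebra appearing in the lists of (iii)--(v) admits a defining chain \eqref{IHOEchain}. Here Proposition \ref{skewconn} does most of the work, since each such $H$ is connected: it suffices to exhibit a chain of Hopf subalgebras $k = H_{(0)} \subset \cdots \subset H_{(n)} = H$ in which each link is a skew polynomial extension, the comultiplication condition \eqref{defnHOE3} then being automatic. For the cocommutative cases this is already in hand: enveloping algebras of solvable Lie algebras are IHOEs by Examples \ref{iterdefn}(ii), and in dimensions $3$ and $4$ the only relevant non-solvable Lie algebras are those whose sole nonabelian simple factor is $\mathfrak{sl}(2,k)$, covered by Examples \ref{iterdefn}(iii). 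For the non-cocommutative families --- the algebras $A(\ldots)$ and $B(\lambda)$ of (iv) and the primitively-thin algebras of (v) --- I would read off the required flag from the explicit presentations in \cite{Zh1} and \cite{WZZ}, checking at each stage that the subalgebra generated by the first few variables is closed under $\Delta$ (hence a Hopf subalgebra) and that the next generator satisfies a skew polynomial relation over it. This case-by-case verification through the explicit coproduct formulas is the main obstacle: routine in principle, but it must be carried out for every algebra in both lists, and it is precisely where one confirms that no algebra in the classification fails to be an IHOE.
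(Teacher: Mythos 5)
Your proposal is correct and follows essentially the same route as the paper: parts (i)--(iii) from Theorem \ref{firstprop} together with \eqref{GKineq} and the fact that proper Hopf subalgebras have strictly smaller GK-dimension, parts (iv)--(v) from the classifications in \cite{Zh1} and \cite{WZZ}, and the converse by exhibiting Hopf-subalgebra flags for each algebra in the lists. The only difference is one of explicitness: where you describe the case-by-case verification as a remaining task, the paper records the explicit chains \eqref{AHOE} and \eqref{BHOE} for the families $A(\lambda_1,\lambda_2,\alpha)$ and $B(\lambda)$ (citing Zhuang's Prop.\ 7.1) and likewise dismisses the GK-dimension $4$ check as a routine examination of the presentations in \cite{WZZ}.
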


\subsection{Definitions and remarks for Theorem \ref{small}}
\label{smallexp}

(i) To explain (iv) we need to explain, following \cite[$\S$ 7]{Zh1},
but adapting the presentations of \cite[Lemma 3.2]{WZZ} the families
of algebras $A(\lambda_1,\lambda_2, \alpha)$ and $B(\lambda)$.

Let $A$ be the factor of the free algebra $k\langle
X,Y,Z \rangle$ by the ideal generated by
$$ [X,Y]$$
$$[Z,X] - \lambda_1X + \alpha Y,$$
$$ [Z,Y] - \lambda_2 Y, $$
where $\alpha = 0$ if $\lambda_1 \neq \lambda_2$, and $\alpha = 0$
or $1$ if $\lambda_1 = \lambda_2$. Abusing notation by writing
$X,Y,Z$ for the images of the corresponding elements in $A$, $A$
becomes a Hopf algebra with augmentation ideal $\langle X,Y,Z
\rangle$, with $X$ and $Y$ primitive, with $$ \Delta (Z) = Z
\otimes 1 + 1 \otimes Z + X \otimes Y - Y \otimes X$$ and with
$$ S(X) = -X, \; \; S(Y) = -Y, \; \; S(Z) = -Z. $$

Let $B$ be the factor of $k\langle X,Y,Z \rangle$ by the ideal generated by
$$ [X,Y] - Y,$$
$$ [Z,X] +Z - \lambda Y, $$
$$  [Z,Y],$$
where $\lambda \in k$. Then $B$ has a Hopf algebra structure with
the same coproduct for $kX + kY + kZ$ as
for $A$. Thus, by Theorem \ref{permit}(i)(c), the only possible values
for the antipode are $S(X) = -X$, $S(Y) = -Y$ and $S(Z) = -Z + Y$.

(ii) Zhuang shows in the proof of \cite[Prop.7.1]{Zh1} that
$A(\lambda_1, \lambda_2,\alpha)$ and $B(\lambda)$ are IHOEs, with the
following structures.

For $A$, there is the chain of Hopf subalgebras
\begin{equation}\label{AHOE}
A_0 = k \subseteq A_1 = k[Y] \subseteq A_2 = k[Y,X]
\subseteq A_2[Z; \delta] = A,
\end{equation}
where $\delta$ is the obvious (given the relations) derivation of $A_2$.

Similarly, for $B$ we can take the chain of Hopf subalgebras
\begin{equation}\label{BHOE}
B_0 = k \subseteq B_1 = k[Y] \subseteq B_2 = k\langle Y,X\rangle
\subseteq B_2[Z; \sigma, \delta] = B,
\end{equation}
where $B_2$ is the enveloping algebra of the 2-dimensional nonabelian
Lie algebra, and $\sigma$ and $\delta$ are clear from the relations.

(iii) Note that the antipode of $B(\lambda)$ has infinite order,
with $S^{2m} (Z) = Z + (2m)Y$ for all $m \geq 1$. This is in contrast
to the properties of commutative or cocommutative Hopf algebras, for
which $S^2 = \mathrm{id}$. Thus both alternatives listed in Theorem
\ref{firstprop}(viii) actually occur.

(iv) By \cite[Theorem 7.6]{Zh1}, the algebras in (iv) above are
precisely the connected Hopf $k$-algebras of GK-dimension 3. That
they are all IHOEs follows from (\ref{AHOE}), (\ref{BHOE}) and the
discussion in $\S$\ref{iterdefn}, Examples (ii) and (iii).

(v) One easily checks by a routine examination of the presentations
in \cite{WZZ} that every connected Hopf $k$-algebra of GK-dimension
4 is an IHOE.

\section{The maximal classical subgroup of a Hopf Ore extension}
\label{classicsec}
\subsection{Maximal classical subgroups.}
\label{classic}
If $T$ is a $k$-algebra, we call an algebra homomorphism
$\chi : T \longrightarrow k$ a \emph{character} of $T$, and its
kernel $\mathrm{Ker}\chi$ a \emph{character ideal}. Set $\Xi(T)$
to denote the set of character ideals of $T$. Note that if $T$ is
affine and $k$ is algebraically closed then $\Xi (T)$ has a natural
structure as an algebraic set. We write $\langle [T,T]\rangle$ to
denote the ideal of $T$ generated by the commutators
$\{ab - ba : a,b \in T\}$, and
$$ I(T) \quad := \quad \cap \{ M : M \in \Xi (T) \}, $$
so that $\langle [T,T]\rangle \subseteq I(T)$.

It is well known and easy to prove that, when $H$ is a Hopf $k$-algebra,
$\langle [H,H]\rangle$ and $I(H)$ are Hopf ideals of $H$. For us,
$k$ is algebraically closed of characteristic 0; let's assume also
that $H$ is affine. Then the nullstellensatz ensures that
$I(H)/\langle [H,H]\rangle$ is a nilpotent ideal of the commutative
affine $k$-algebra $\overline{H} := H/\langle [H,H]\rangle$. But, in
characteristic 0, such Hopf algebras are semiprime by \cite{Wa}, so
$\langle [H,H]\rangle = I(H).$ Thus $\overline{H} \cong \mathcal{O}(G)$
for some affine algebraic group $G$ over $k$. For obvious reasons we
call $G$ the \emph{maximal classical subgroup} of $H$.

The above discussion all applies if $H$ is assumed to be noetherian
rather than affine, since Molnar's theorem \cite{Mo} ensures that
commutative noetherian Hopf algebras are affine.

\subsection{Goodearl's theorem generalised for characters.}
\label{gengood}
In this subsection we consider general Ore extensions, rather than
Hopf algebras. In the main result (Theorem 3.1) of \cite{Go},
Goodearl describes the prime ideals of an Ore extension
$T = R[x;\sigma, \delta]$,
where $R$ is a commutative noetherian ring. To analyse the classical
subgroups of an
HOE we generalise the special case of Goodearl's theorem applying
to character ideals,
to the case where $R$ is a not necessarily commutative $F$-algebra.

\begin{thrm}
Let $F$ be algebraically closed and let $R$ be a
$F$-algebra. Let $\sigma$ and $\delta$ be respectively
an $F$-algebra automorphism and a $\sigma$-derivation of $R$, and
set $T = R[x; \sigma, \delta]$. Write
$\Psi: \Xi (T) \longrightarrow \Xi (R): M \mapsto M \cap R.$
\begin{enumerate}
\item
Let $M \in \Xi(T)$ and denote $\Psi(M)$ by $\mathfrak{m}.$ Then
{\rm{(a)}} $\delta( [R,R]) \subseteq \mathfrak{m}$, and
either {\rm{(b)}} $\mathfrak{m}$ is $(\sigma, \delta)$-invariant, or
{\rm{(c)}} $\mathfrak{m}$ is not $\sigma$-invariant.
\item
Let $\mathfrak{m} \in \Xi (R)$, and suppose that {\rm{(b)}}
{\rm{(}}and hence {\rm{(a))}} hold for $\mathfrak{m}$. Then
$\mathfrak{m}T \lhd T$ and $T/\mathfrak{m}T \cong k[x]$, so that
$$ \Psi^{-1} (\mathfrak{m}) = \{ \langle \mathfrak{m}T, x -
\lambda \rangle : \lambda \in k \} \cong \mathbb{A}^{1}_k.$$
\item
Let $\mathfrak{m} \in \Xi (R)$, and suppose that {\rm{(a)}} and {\rm{(c)}}
hold for $\mathfrak{m}$. Then there exists a unique $M \in \Xi (T)$
with $\Psi (M) = \mathfrak{m}.$
\end{enumerate}
\end{thrm}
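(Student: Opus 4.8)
The plan is to recast the whole statement in terms of \emph{characters}: an element of $\Xi(T)$ is precisely the kernel of an $F$-algebra homomorphism $\chi\colon T\to F$, and restriction to $R$ corresponds to $\Psi$. Fix a character $\phi\colon R\to F$ with $\ker\phi=\mathfrak{m}$, set $\phi^{\sigma}=\phi\circ\sigma$, and introduce the $F$-linear functionals $f=\phi\circ\delta$ and $g=\phi-\phi^{\sigma}$ on $R$; note that, since $R/\mathfrak{m}\cong F$ canonically, $\phi$ is the \emph{unique} character with kernel $\mathfrak{m}$, and $g\neq0$ precisely when $\mathfrak{m}$ is not $\sigma$-invariant. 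The one computation that drives everything is obtained by applying a character $\chi$ of $T$ (with $\chi|_R=\phi$ and $\xi:=\chi(x)$) to the defining relation $xr-\sigma(r)x=\delta(r)$; using commutativity of $F$ this gives the master equation
$$\xi\,g(r)=f(r)\qquad(r\in R).$$

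For (i) I would begin with the given character $\chi$ of $T$, so the master equation holds for $\xi=\chi(x)$. Since any character annihilates commutators and $\sigma([a,b])=[\sigma a,\sigma b]$, both $\phi$ and $\phi^{\sigma}$ kill $[R,R]$, hence $g([R,R])=0$ and therefore $f([R,R])=\xi\,g([R,R])=0$, which is (a). For the dichotomy, I would assume the negation of (c), i.e.\ that $\mathfrak{m}$ is $\sigma$-invariant; then $\phi^{\sigma}$ too vanishes on $\mathfrak{m}$, so $g|_{\mathfrak{m}}=0$ and the master equation forces $f|_{\mathfrak{m}}=0$, that is $\delta(\mathfrak{m})\subseteq\mathfrak{m}$. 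Thus $\mathfrak{m}$ is $(\sigma,\delta)$-invariant, giving (b).

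For (ii), $(\sigma,\delta)$-invariance lets $\sigma$ and $\delta$ descend to $R/\mathfrak{m}\cong F$; as an $F$-algebra automorphism and an $F$-linear $\sigma$-derivation of $F$, they descend to $\mathrm{id}_F$ and $0$. A direct check from $xm=\sigma(m)x+\delta(m)$ shows $\mathfrak{m}T=\bigoplus_i\mathfrak{m}x^i$ is a two-sided ideal with $T/\mathfrak{m}T\cong F[x]$, an ordinary polynomial ring. Any character of $T$ restricting to $\phi$ kills $\mathfrak{m}T$, hence factors through $F[x]$ and is determined by $\lambda=\chi(x)\in F$; conversely every $\lambda$ arises, and since $1\notin\langle\mathfrak{m}T,x-\lambda\rangle$ while $\mathfrak{m}$ is maximal, each such ideal meets $R$ in exactly $\mathfrak{m}$. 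This identifies $\Psi^{-1}(\mathfrak{m})$ with $\{\langle\mathfrak{m}T,x-\lambda\rangle:\lambda\in F\}\cong\mathbb{A}^1_F$.

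Part (iii) is where the real content lies. Here $\phi$ is given and I must produce a \emph{unique} scalar $\xi$ solving $\xi g=f$, for then the universal property of $R[x;\sigma,\delta]$ supplies a unique character $\chi$ of $T$ with $\chi|_R=\phi$, $\chi(x)=\xi$, and hence a unique $M=\ker\chi$ over $\mathfrak{m}$. The mechanism is that $f$ is a twisted derivation: applying $\phi$ to $\delta(rs)=\delta(r)s+\sigma(r)\delta(s)$ gives $f(rs)=f(r)\phi(s)+\phi^{\sigma}(r)f(s)$, and antisymmetrising in $r,s$ yields the crucial identity
$$f([r,s])=f(r)g(s)-f(s)g(r)\qquad(r,s\in R).$$
Hypothesis (a) makes the left side vanish, so $f(r)g(s)=f(s)g(r)$ for all $r,s$, while hypothesis (c) guarantees $g\neq0$; choosing $s_0$ with $g(s_0)\neq0$ and setting $\xi=f(s_0)/g(s_0)$ then forces $f=\xi g$ with $\xi$ unique. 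I expect this to be the main obstacle: seeing that the commutator identity, combined with (a), collapses $f$ and $g$ into proportional functionals. Once that is in hand, both existence and uniqueness of $M$ follow formally from the universal property of the Ore extension.
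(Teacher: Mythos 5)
Your argument is correct, and it takes a genuinely different route from the paper's. You linearise the whole problem: everything is phrased in terms of the functionals $f=\phi\circ\delta$ and $g=\phi-\phi\circ\sigma$, the ``master equation'' $\xi g=f$ obtained by applying a character to $xr-\sigma(r)x=\delta(r)$, and the universal property of the Ore extension. The paper instead works with ideals throughout: for part (iii) it first reduces hypothesis (a) to the condition $\delta(\mathfrak m\cap\sigma^{-1}(\mathfrak m))\subseteq\mathfrak m$ via the identity $\mathfrak a\cap\mathfrak b=\langle[R,R]\rangle+\mathfrak a\mathfrak b$ for distinct character ideals, then proves a standalone proposition by choosing $r\in\mathfrak m$ with $\sigma(r)\equiv 1\ (\mathrm{mod}\ \mathfrak m)$ (your $s_0$ with $g(s_0)=-1$, with $\lambda=f(s_0)$ playing the role of $-\xi$), showing any $M$ over $\mathfrak m$ must contain $x+\lambda$, and then explicitly constructing the candidate ideal $I=(x+\lambda)F[x]+\mathfrak m T$ and verifying by hand that it is two-sided. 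Your commutator identity $f([r,s])=f(r)g(s)-f(s)g(r)$ is the functional-level shadow of the paper's Chinese-remainder step, and invoking the universal property of $R[x;\sigma,\delta]$ to build the character replaces the paper's most laborious computation (the verification that $I$ is an ideal). What your approach buys is brevity and transparency --- existence and uniqueness drop out of a one-line proportionality argument; what the paper's approach buys is an explicit description of the unique ideal $M$ as $(x+\lambda)F[x]+\mathfrak m T$, which it reuses later (e.g.\ in the analysis of variant HOEs, where one needs to know that $I(T)$ is generated by $\tilde x$). Parts (i) and (ii) are handled essentially identically in both treatments.
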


In the next few pages, we will refer to the conditions (a), (b)
and (c) in Theorem \ref{gengood}(i). So we list these conditions
here explicitly
\begin{enumerate}
\item[(\ref{gengood}(ia))]
$\delta( [R,R]) \subseteq \mathfrak{m}$,
\item[(\ref{gengood}(ib))]
$\mathfrak{m}$ is $(\sigma, \delta)$-invariant,
\item[(\ref{gengood}(ic))]
$\mathfrak{m}$ is not $\sigma$-invariant.
\end{enumerate}

\subsection{Proof of Theorem \ref{gengood}: initial steps}
\label{goodproof1}
Throughout the proof $F, R,x, \sigma, \delta$ and $T$ will be as in
Theorem \ref{gengood}. We begin by observing that if $A$ is an ideal
of $T$ and $\mathfrak{a} := A \cap R,$ then
$\delta (\mathfrak{a} \cap \sigma^{-1} (\mathfrak{a}))
\subseteq \mathfrak{a}.$
This follows by applying the relation (\ref{relations}) with
$r \in \mathfrak{a} \cap \sigma^{-1}(\mathfrak{a}).$ In the
following result we address the converse to this statement,
for character ideals. In essence we construct a sort of Verma
module for $T$ by induction from a 1-dimensional $R$-module,
and show that under suitable hypotheses this module has a
1-dimensional factor.

\begin{prop}  Let $\mathfrak{m}$ be an ideal of $R$ with
$R/\mathfrak{m} \cong F.$ Suppose that \begin{equation} \label{move}
\sigma(\mathfrak{m}) \neq \mathfrak{m},\end{equation} and that
\begin{equation}
\label{deltafix}
\delta (\mathfrak{m} \cap \sigma^{-1}(\mathfrak{m})) \subseteq \mathfrak{m}.
\end{equation}
Then there is a unique ideal $M$ of $T$ such that
$M \cap R = \mathfrak{m}$. Moreover, $T/M \cong k.$
\end{prop}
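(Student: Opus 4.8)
The plan is to reduce everything to determining the scalar $\psi(x)$ for an algebra character $\psi\colon T\to F$ extending the character $\chi\colon R\to F$ with $\ker\chi=\mathfrak{m}$. The first step is to notice that \emph{any} ideal $M\lhd T$ with $M\cap R=\mathfrak{m}$ is forced to have $T/M\cong F$. Indeed, in $\bar{T}:=T/M$ the image of $R$ is $R/\mathfrak{m}\cong F$, so each $r\in R$ reduces to the scalar $\chi(r)\bar{1}$, and the defining relation \eqref{relations} becomes $\bigl(\chi(r)-\chi\sigma(r)\bigr)\bar{x}=\chi\delta(r)\,\bar{1}$. By \eqref{move} the characters $\chi$ and $\chi\sigma$ differ, so we may pick $r_0$ with $\chi(r_0)\neq\chi\sigma(r_0)$; then the relation forces $\bar{x}=\lambda\bar{1}$ with $\lambda:=\chi\delta(r_0)/\bigl(\chi(r_0)-\chi\sigma(r_0)\bigr)$. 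Hence $\bar{T}=F\bar{1}\cong F$ (note $\bar{1}\neq 0$, as otherwise $M=T$ and $M\cap R=R\neq\mathfrak{m}$), and $M=\ker\psi$ for the character $\psi$ determined by $\psi|_R=\chi$, $\psi(x)=\lambda$. This is the concrete incarnation of the ``one-dimensional factor of the induced module'' heuristic, and it already gives uniqueness: the scalar $\lambda$ does not depend on $M$, so at most one such ideal exists.

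It remains to prove existence, which is equivalent to showing that $\psi$ is a well-defined algebra homomorphism, i.e. that the same $\lambda$ satisfies the functional identity $\lambda\,\bigl(\chi(r)-\chi\sigma(r)\bigr)=\chi\delta(r)$ for \emph{all} $r\in R$, not merely for $r_0$. Writing $\phi:=\chi-\chi\sigma\colon R\to F$, a nonzero $F$-linear functional by \eqref{move}, this identity says precisely that $\chi\delta=\lambda\phi$, which in turn holds for a (unique) scalar $\lambda$ exactly when $\ker\phi\subseteq\ker(\chi\delta)$.

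I expect this inclusion to be the crux, and it is where hypothesis \eqref{deltafix} is used. The idea is to compute $\ker\phi$ by a codimension count. Since $\phi\neq 0$, $\ker\phi$ has codimension $1$; since $\mathfrak{m}\neq\sigma^{-1}(\mathfrak{m})$ by \eqref{move}, the subspace $\mathfrak{m}\cap\sigma^{-1}(\mathfrak{m})$ has codimension $2$; and $1\notin\mathfrak{m}$. Both $1$ and $\mathfrak{m}\cap\sigma^{-1}(\mathfrak{m})$ lie in $\ker\phi$ (one checks $\phi(1)=0$ directly, and any $r$ with $\chi(r)=\chi\sigma(r)=0$ clearly satisfies $\phi(r)=0$), so $F\cdot 1+\bigl(\mathfrak{m}\cap\sigma^{-1}(\mathfrak{m})\bigr)$ is a codimension-$1$ subspace sitting inside the codimension-$1$ space $\ker\phi$; hence
\[ \ker\phi=F\cdot 1+\bigl(\mathfrak{m}\cap\sigma^{-1}(\mathfrak{m})\bigr). \]
Now $\chi\delta$ annihilates this space: $\delta(1)=0$ gives $\chi\delta(1)=0$, and $\chi\delta\bigl(\mathfrak{m}\cap\sigma^{-1}(\mathfrak{m})\bigr)=0$ is exactly the content of \eqref{deltafix}. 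Therefore $\ker\phi\subseteq\ker(\chi\delta)$, the scalar $\lambda$ solves the functional identity for all $r$, and $\psi$ is a genuine character. Consequently $M:=\ker\psi=\langle\mathfrak{m},\,x-\lambda\rangle$ is the desired unique ideal with $M\cap R=\mathfrak{m}$ and $T/M\cong F$. The remaining points—that $\delta(1)=0$ and that $\psi$, once consistent on generators, extends to all of $T$—are routine.
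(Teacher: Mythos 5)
Your proof is correct, and while its uniqueness half coincides in substance with the paper's, its existence half takes a genuinely different route. For uniqueness, both arguments exploit the relation $xr=\sigma(r)x+\delta(r)$ at a single element where $\chi$ and $\chi\sigma$ disagree to pin down the image of $x$ in any quotient $T/M$; the paper normalises by choosing $r\in\mathfrak{m}$ with $\sigma(r)\equiv 1\ (\mathrm{mod}\ \mathfrak{m})$, you work with an arbitrary $r_0$, and the resulting scalars agree up to sign. For existence, the paper follows its announced ``Verma module'' strategy: it builds $I=(x+\lambda)F[x]+\mathfrak{m}T$ as a right $F[x]$-submodule and verifies by hand that it is a two-sided ideal, the crux being the inclusion $x\mathfrak{m}F[x]\subseteq I$, proved via the decomposition $\mathfrak{m}=Fr+(\mathfrak{m}\cap\sigma^{-1}(\mathfrak{m}))$ together with \eqref{deltafix}. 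You instead construct the quotient map directly: by the universal property of Ore extensions a character of $T$ extending $\chi$ exists if and only if some $\lambda$ satisfies $\lambda(\chi-\chi\sigma)=\chi\delta$, and you reduce this to the observation that $\ker(\chi-\chi\sigma)=F\cdot 1+(\mathfrak{m}\cap\sigma^{-1}(\mathfrak{m}))$ is annihilated by $\chi\delta$, which is exactly where \eqref{deltafix} enters for you. The two uses of \eqref{deltafix} are parallel --- both rest on the same codimension count forced by \eqref{move} --- but your version trades the ideal-membership computations for one line of linear algebra on functionals, which makes the role of the hypotheses more transparent; the paper's version has the mild advantage of exhibiting $M$ explicitly as $(x+\lambda)F[x]+\mathfrak{m}T$ without appealing to the universal property. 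The steps you defer as routine (that $\delta(1)=0$ and that a compatible pair $(\chi,\lambda)$ really does induce a character of $T$) are indeed standard.
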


\begin{proof} Assume that $\mathfrak{m}$ satisfies the stated hypotheses.

$\textbf{Step 1:}$ There exists an element $r \in \mathfrak{m}$ such
that $\sigma (r) \equiv 1 (\mathrm{mod}\,\mathfrak{m});$ moreover,
$r$ is uniquely determined and non-zero, modulo
$\mathfrak{m} \cap \sigma^{-1}(\mathfrak{m})$.

For, thanks to (\ref{move}) there are elements $a,r \in \mathfrak{m}$
such that $1 = a + \sigma(r)$, and clearly $r \notin
\sigma^{-1}(\mathfrak{m}).$ Since $\mathfrak{m}/\mathfrak{m} \cap
\sigma^{-1}(\mathfrak{m}) \cong k$, $r$ is uniquely determined
modulo $\mathfrak{m} \cap \sigma^{-1}(\mathfrak{m})$.

$\textbf{Step 2:}$ Let $r$ be as in Step 1, and define
$\lambda := \delta (r)\mathrm{mod}\,\mathfrak{m}.$ Suppose $M$ is
an ideal of $T$ with $M \cap R = \mathfrak{m}.$ Then $x + \lambda \in M.$

We have
\begin{equation}
\label{scream}
M \ni xr = \sigma (r)x + \delta (r).
\end{equation}
  There are elements $t,s \in \mathfrak{m}$ such that
$\sigma (r) = 1 + t$ and $\delta (r) = \lambda+ s$. Substituting in
(\ref{scream}) gives
$$ x + tx + \lambda + s \in M,$$
so that $x + \lambda \in M$ and Step 2 is proved.

$\textbf{Step 3:}$ Define $I$ to be the right $F[x]-$submodule of $T$,
$$ I = (x + \lambda)k[x] + \mathfrak{m}T.$$
Then $I$ is an ideal of $T$, with $T/I \cong k$ and $I \cap R = \mathfrak{m}.$

Since it is clear that $T/I \cong k$ as vector spaces and that
$\mathfrak{m} \subseteq I$, we only need to prove that $I$ is an ideal.
Also, if $I$ is a left ideal of $T$, then $IT =I(k + I) \subseteq I,$
so that $I$ is an ideal. To prove that $I$ is a left ideal, it's
sufficient to show that $\mathfrak{m}I \subseteq I$ and that
$xI \subseteq I$, since $T$ is generated by $x$ and $\mathfrak{m}$.
The first of these claims is clear, since $\mathfrak{m}T \subseteq I.$
For the second claim, since $T = \mathfrak{m}k[x] + k[x],$ it's enough
to prove that
  \begin{equation}
\label{crux}
x\mathfrak{m}k[x] \subseteq I.
\end{equation}
  Thus, let $w \in \mathfrak{m}$ and $f \in k[x]$, and consider $xwf.$
Suppose first that $w \in \mathfrak{m} \cap \sigma^{-1}(\mathfrak{m}).$
Then, by (\ref{deltafix}),
$$ xwf = \sigma (w)xf +\delta (w)f \in \mathfrak{m}T \subseteq I.$$
On the other hand, $\mathfrak{m} = kr + (\mathfrak{m} \cap \sigma^{-1}
(\mathfrak{m})$ by Step 1, so it remains only to show that
\begin{equation}\label{final?}
xrf \in I.
\end{equation}
But
\begin{align*} xrf &= \sigma (r)xf + \delta (r)f \\
&= (1 +t)xf + (\lambda + s)xf \\
&= f(x + \lambda) + txf + sf \\
&\in (x + \lambda)k[x] + \mathfrak{m}T = I. \end{align*}
\end{proof}

\subsection{Proof of Theorem \ref{gengood}: conclusion.}
\label{goodproof2}
\begin{proof} (i) Let $M \in \Xi (T)$, with $\Psi (M) := \mathfrak{m}.$
If $\sigma (\mathfrak{m}) \neq \mathfrak{m}$, then
$$ \delta( [R,R]) \subseteq \delta (\mathfrak{m} \cap
\sigma^{-1}(\mathfrak{m})) \subseteq \mathfrak{m},$$
by the observation at the start of $\S$\ref{goodproof1}. On the other
hand, if $\sigma(\mathfrak{m}) = \mathfrak{m}$ then the same
calculation using (\ref{relations}) shows that
$\delta(\mathfrak{m}) \subseteq \mathfrak{m}.$

Therefore (\ref{gengood}(ia)) holds in both cases, and either
(\ref{gengood}(ib)) or (\ref{gengood}(ic)) holds, as claimed.

(ii) Let $\mathfrak{m}$ be a $(\sigma, \delta)-$invariant character
ideal of $R$. So $\mathfrak{m}T$ is an ideal of $T$, and
$$T/\mathfrak{m}T \cong k[x].$$
Therefore (ii) follows.

(iii) Let $\mathfrak{m} \in \Xi (R)$ and suppose that $\mathfrak{m}$
satisfies (\ref{gengood}(ia)) and (\ref{gengood}(ic)). We claim
that (\ref{deltafix}) holds for $\mathfrak{m}$. First, note that
if $\mathfrak{a}$ and $\mathfrak{b}$ are distinct character ideals
of a $F$-algebra $R$, then
\begin{equation}
\label{chinese}
\mathfrak{a}\cap \mathfrak{b}
= \langle [R,R] \rangle + \mathfrak{a}\mathfrak{b}
= \langle [R,R] \rangle + \mathfrak{b}\mathfrak{a}.
\end{equation}
This follows from the inclusion
$$ \mathfrak{a} \cap \mathfrak{b} = (\mathfrak{a} \cap \mathfrak{b})R
= (\mathfrak{a} \cap \mathfrak{b})(\mathfrak{a} + \mathfrak{b})
\subseteq \mathfrak{a}\mathfrak{b} + \mathfrak{b}\mathfrak{a},$$
so that $\mathfrak{a}\mathfrak{b} + \mathfrak{b}\mathfrak{a}
= \mathfrak{a} \cap \mathfrak{b}.$ Since
$\langle [R,R] \rangle \subseteq \mathfrak{a} \cap \mathfrak{b}$,
we get $\mathfrak{a}\mathfrak{b} + \mathfrak{b}\mathfrak{a}
+ \langle [R,R] \rangle = \mathfrak{a} \cap \mathfrak{b},$
proving (\ref{chinese}).

An easy calculation shows that (\ref{gengood}(ia)) implies that
\begin{equation}\label{grate}
\delta \langle [R,R] \rangle \subseteq \mathfrak{m}.
\end{equation}
Finally, by (\ref{chinese}) and (\ref{grate}),
\begin{align*} \label{getit}
\delta (\mathfrak{m} \cap \sigma^{-1}(\mathfrak{m}))
&= \delta (R[R,R]R + \sigma^{-1}(\mathfrak{m})\mathfrak{m})\\
&\subseteq \delta (R[R,R]R) + \delta(\sigma^{-1}(\mathfrak{m}))
\mathfrak{m} + \mathfrak{m}\delta(\mathfrak{m}) \\
&\subseteq \mathfrak{m},
\end{align*}
proving (\ref{deltafix}) as claimed.

Thus the hypotheses of Proposition \ref{goodproof1} are satisfied
by $\mathfrak{m}$, and hence (iii) is proved.
\end{proof}

\begin{rems}
\begin{enumerate}
\item
Clearly, hypothesis (\ref{gengood}(ib)) on $\mathfrak{m} \in \Xi(R)$ implies
hypothesis (\ref{gengood}(ia)) for $\mathfrak{m}$; but this is not true for
hypothesis (\ref{gengood}(ic)), and we can check from
Examples \ref{IHex}(iii),
$T = U(\mathfrak{sl}(2,k)) = R[f; \sigma, \delta]$ that (\ref{gengood}(ia)) is
genuinely needed when $R$ is noncommutative.
\item
It is natural to ask the following question, perhaps starting
with the case of completely prime ideals:
\begin{question} Does Goodearl's theorem generalise in full to the
setting where the coefficient algebra $R$ is not commutative?
\end{question}
\end{enumerate}
\end{rems}

\subsection{Characters of Hopf Ore extensions.}
\label{HOEchar}
Retain the setting of \ref{gengood}, but assume now in addition that
$T$ is a Hopf algebra and $R$ is a right [resp. left] coideal
subalgebra of $T$. We can use the left [resp. right] winding
automorphisms of $T$, \cite[I.9.25]{BG}, to move between any two
character ideals of $T$ whilst preserving the inclusion $R \subseteq T$.
The following then follows easily from Theorem \ref{gengood}, since
one simply has to note that $\Xi (T)$ is permuted transitively by the
left (and by the right) winding automorphisms of $T$, these being
effectively the regular representations of the character group. We
say that $M \in \Xi(T)$ (with $\mathfrak{m} =  \Psi(M)$) has
\emph{invariant type} if (\ref{gengood}(ib)) holds for $\mathfrak{m}$;
and that $M$ has \emph{variant type} if (\ref{gengood}(ia)) and
(\ref{gengood}(ic)) hold for $\mathfrak{m}$.

\begin{thrm}
Let $T = R[x; \sigma, \delta]$ as in Theorem {\rm{\ref{gengood}}}. Assume
that $T$ is a Hopf $F$-algebra with $R$ a right or left coideal
subalgebra of $T$. Then either all of $\Xi (T)$ is of invariant type,
or all of $\Xi (T)$ is  of variant type.
\end{thrm}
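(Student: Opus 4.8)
The plan is to show that the type of a character ideal $M \in \Xi(T)$ is detected by the cardinality of the $\Psi$-fibre through $M$, and that this cardinality is constant across $\Xi(T)$ because the winding automorphisms act transitively on $\Xi(T)$ and compatibly with $\Psi$.

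First I would record that each $M \in \Xi(T)$ has exactly one type. By Theorem \ref{gengood}(i), $\mathfrak{m} := \Psi(M)$ always satisfies condition (a), and satisfies exactly one of (b) or (c): these are mutually exclusive, since $(\sigma,\delta)$-invariance forces $\sigma$-invariance and so excludes (c). Thus $M$ is of invariant type (condition (b)) or of variant type (conditions (a) and (c)), and never both. I would then convert the type into fibre size using parts (ii) and (iii) of Theorem \ref{gengood}: if $\mathfrak{m}$ is of invariant type then $\Psi^{-1}(\mathfrak{m}) \cong \mathbb{A}^{1}_{F}$, which is infinite as $F$ is algebraically closed, whereas if $\mathfrak{m}$ is of variant type then $\Psi^{-1}(\mathfrak{m})$ is a single point. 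Hence the type of $M$ is determined by whether the fibre $\Psi^{-1}(\Psi(M))$ is infinite or a singleton.

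The second step is to produce a transitive group of automorphisms of $T$ compatible with $\Psi$. Because $R$ is a one-sided coideal subalgebra, the matching family of one-sided winding automorphisms $\tau_{\gamma}$ of $T$ (the left winding automorphisms when $R$ is a right coideal subalgebra, the right ones when $R$ is a left coideal subalgebra) stabilises $R$: for $r \in R$ the leg of $\Delta(r)$ that $\tau_{\gamma}$ retains already lies in $R$, so $\tau_{\gamma}(r) \in R$. As the family is closed under inverses, each $\tau_{\gamma}$ restricts to an automorphism of $R$, and $\Psi$ is equivariant, $\Psi(\tau_{\gamma}(M)) = \tau_{\gamma}(M) \cap \tau_{\gamma}(R) = \tau_{\gamma}(\Psi(M))$. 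Consequently $\tau_{\gamma}$ carries the fibre over $\mathfrak{m}$ bijectively onto the fibre over $\tau_{\gamma}(\mathfrak{m})$, so fibres in a common $\tau$-orbit have equal cardinality.

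Finally I would invoke transitivity. The winding automorphisms realise the regular representation of the character group $X(T)$ on $\Xi(T)$; concretely $\varepsilon \circ \tau^{\ell}_{\gamma} = \gamma$ gives $\tau^{\ell}_{\gamma}(\ker \gamma) = T^{+}$, so every character ideal lies in the single orbit of the augmentation ideal $T^{+}$, and likewise for the right winding automorphisms. Combining this with the second step, all fibres $\Psi^{-1}(\Psi(M))$, $M \in \Xi(T)$, share one cardinality, and by the first step every $M$ then has the same type, which is the asserted dichotomy. The one point needing care — the main obstacle — is the compatibility of the second step: one must verify that the correct one-sided family of winding automorphisms both stabilises $R$ (this is precisely where the right/left coideal hypothesis enters and fixes the left/right choice) and acts transitively on $\Xi(T)$. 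Once these two facts are secured, the fibre-counting provided by Theorem \ref{gengood}(ii),(iii) closes the argument.
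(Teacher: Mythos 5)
Your argument is correct and is essentially the paper's own (the paper proves this in one sentence by noting that the one-sided winding automorphisms preserve $R$, permute $\Xi(T)$ transitively, and hence force the fibre $\Psi^{-1}(\mathfrak{m})$ to take the same form --- singleton or copy of $\mathbb{A}^1_F$ --- for every $\mathfrak{m}$ in the image, which by Theorem \ref{gengood}(ii),(iii) determines the type). Your fibre-cardinality bookkeeping just makes explicit the step the paper leaves implicit.
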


In the first situation of the theorem we say that $R \subseteq T$ is
an \emph{invariant extension}; and in the second, we call
$R \subseteq T$ a \emph{variant extension}.

Both variant and invariant extensions occur. A convenient example is
the enveloping algebra of the two-dimensional nonabelian Lie algebra,
$T = k\langle x,y : [y,x] = x \rangle$. This Hopf algebra can be
presented as
$$ T \quad \cong \quad k[x][y; \delta],$$
where $\delta$ is the derivation $x \frac{\partial}{\partial x}$; and as
$$ T \quad \cong \quad k[y][x; \sigma ], $$
where $\sigma$ is the automorphism $\sigma(y) = y - 1.$ Thus the first
presentation is of invariant type, with
$$ \Xi (T) = \Psi^{-1}(\langle x \rangle)\approx \mathrm{Maxspec}(k[y]);$$
and the second is of variant type, with
$$ \Xi (T) = \Psi^{-1}(\mathrm{Maxspec}(k[y]) =
\{\langle y - \lambda, x \rangle : \lambda \in k \}.$$
Of course, when $\sigma = \mathrm{id}_R$ then only invariant type
is possible, but when $\sigma$ is non-trivial then \emph{a priori}
either option can occur.

\subsection{The maximal classical subgroup of an IHOE}
\label{maxIHOE}
The coradical of a factor Hopf algebra of a Hopf $k$-algebra $H$ is
contained in the image of the coradical $H_0$ of $H$ by
\cite[Corollary 5.3.5]{Mon}. Suppose that $H$ is an IHOE of GK-dimension
$n$. Then $H$ is connected by Theorem \ref{firstprop}(i). So, by the
first sentence, every Hopf factor of $H$ is also connected of finite
GK-dimension; in particular this is true of its maximal classical
subgroup $H/I(H)$. Continuing the discussion of $\S$ \ref{classic}
for $H$, recall part of the content of Examples \ref{iterdefn}(i):
the commutative IHOEs over $k$ are precisely the coordinate algebras
of the unipotent algebraic groups over $k$, and these are precisely
the commutative affine connected Hopf $k$-algebras. We can therefore
conclude that
\begin{equation}
\textit{ the maximal classical subgroup $U$ of an IHOE $H$ is unipotent.}
\end{equation}
Trivially, the dimension of $U$ is at most $\mathrm{GKdim}H$, with
equality if and only if $H = \mathcal{O}(U).$ Of course, every unipotent
group $U$ occurs as the maximal classical subgroup of an IHOE, namely
of the IHOE $\mathcal{O}(U)$, Examples \ref{iterdefn}(i).

Contrary to what one might conjecture, guided for example by the case
of enveloping algebras,
$$ \textit{ the maximal classical subgroup of an IHOE is not
necessarily normal.} $$
Recall that, in the context of algebraic groups, the normality of a
subgroup $N$ of the algebraic $k$-group $G$ is equivalent to the
normality of the defining ideal of $N$ in $\mathcal{O}(G)$,
\cite[page 36]{Mon}; normality of  Hopf ideals is defined at
\cite[Definition 3.4.5]{Mon}. Consider for example the algebras
$B(\lambda)$ of Theorem \ref{small}(iv)(c): we easily calculate that
$$I(B(\lambda)) = \langle Y,Z \rangle, $$
and it is routine to check that this ideal is
\emph{not} normal.

\section{Algebra structure of HOEs - first steps}
\label{algHOEs}
\subsection{Invariant HOEs.}
\label{invHOEs}
Recall the definitions of \emph{invariant} and \emph{variant} HOEs
in $\S$\ref{HOEchar}.

\begin{thrm} Let $R$ be a Hopf $k$-algebra and let $T =
R[x; \sigma, \delta]$ be an invariant HOE. Then $R$ is a normal
Hopf subalgebra of $T$ and there is a change of variables so that
$T = R[\tilde{x}; \partial]$, where $\partial$ is a derivation
of $R$.
\end{thrm}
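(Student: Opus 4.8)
The plan is to extract from the invariance hypothesis the two elementary facts $\sigma(R^+)=R^+$ and $\delta(R^+)\subseteq R^+$, and to derive both conclusions from them. Recall from $\S$\ref{HOEchar} that $T$ being an invariant extension means exactly that the character ideal $\mathfrak m=\Psi(T^+)=R^+$ is $(\sigma,\delta)$-invariant, equivalently that $R^+T$ is an ideal of $T$; since $R^+=\ker\varepsilon$, this property is intrinsic to the extension and is unaffected by any change of the variable.

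For normality I would argue directly that $R^+T=TR^+$. For $r\in R^+$ the defining relation gives $xr=\sigma(r)x+\delta(r)$ with $\sigma(r),\delta(r)\in R^+$, so $xR^+\subseteq R^+T$ and $R^+T$ is a two-sided ideal; rewriting the relation as $rx=x\sigma^{-1}(r)-\delta(\sigma^{-1}(r))$ and using $\sigma^{-1}(R^+)=R^+$, $\delta(R^+)\subseteq R^+$ shows symmetrically that $R^+x\subseteq TR^+$, so $TR^+$ is a two-sided ideal as well. As $R^+T$ (resp. $TR^+$) is the smallest right (resp. left) ideal containing $R^+$, each is contained in the other, whence $R^+T=TR^+$. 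By the standard characterisation of normal Hopf subalgebras (\cite{Mon}), $R$ is then a normal Hopf subalgebra of $T$.

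For the change of variables I would invoke the coalgebraic structure: after normalising $x$ as in $\S$\ref{permit} so that $\Delta(x)=a\otimes x+x\otimes 1+w$ with $a\in G(R)$, $w\in R^+\otimes R^+$ and $\varepsilon(x)=0$, Theorem \ref{prowinding}(i)(d) presents $\sigma$ as a left winding automorphism $\sigma=\tau^{\ell}_{\chi}$, where $\chi(r)=\sigma(r_1)S(r_2)$ satisfies $\chi=\varepsilon\circ\sigma$. The crux is then a one-line observation: invariance gives $\sigma(R^+)=R^+$, so the characters $\varepsilon\circ\sigma$ and $\varepsilon$ share the codimension-one kernel $R^+$ and hence coincide; thus $\chi=\varepsilon$ and $\sigma=\tau^{\ell}_{\varepsilon}=\mathrm{id}$. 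With $\sigma$ trivial, the $\sigma$-derivation $\delta$ is an ordinary derivation $\partial$ of $R$, and $T=R[\tilde x;\partial]$ as required, where $\tilde x$ is the normalised variable.

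The main obstacle is the input from Theorem \ref{prowinding}, namely that $\sigma$ is a winding automorphism at all: this is where the coproduct of $x$ rather than the purely ideal-theoretic data is used, and where the hypothesis \textrm{(H)} of that theorem enters. If one prefers not to assume \textrm{(H)}, it can be circumvented using the normality just proved. By Theorem \ref{gengood}(ii) the quotient $\bar T:=T/R^+T$ is a Hopf algebra isomorphic as an algebra to $k[\bar x]$, and the general form $\Delta(x)=a\otimes x+x\otimes b+v(x\otimes x)+w$ descends (after fixing $\varepsilon_{\bar T}(\bar x)=0$, which forces $\varepsilon(a)=\varepsilon(b)=1$) to $\bar\Delta(\bar x)=1\otimes\bar x+\bar x\otimes 1+v(\bar x\otimes\bar x)$. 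Were $v\neq 0$, the element $1+v\bar x$ would be a grouplike, hence a unit, of the polynomial algebra $k[\bar x]$, which is absurd; so $v=0$, and then $a,b\in G(R)$ by Lemma \ref{gplike}(i). This places us exactly in the normalised situation in which the winding-automorphism argument of Theorem \ref{prowinding}(i)(d) applies verbatim, after which the identification $\chi=\varepsilon$ and the remaining bookkeeping are routine.
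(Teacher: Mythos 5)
Your computation that $R^+T=TR^+$ is correct, but the step ``$R^+T=TR^+$, hence $R$ is normal by the standard characterisation'' runs the easy implication backwards: normality of a Hopf subalgebra $K\subseteq H$ implies $HK^+=K^+H$ (\cite[Lemma 3.4.2]{Mon}), whereas the converse is a theorem requiring one to first identify $R$ with the coinvariants $T^{\mathrm{co}\pi}={}^{\mathrm{co}\pi}T$ of $\pi\colon T\to T/R^+T$ and then invoke the adjoint-stability of coinvariants (\cite[Proposition 3.4.3]{Mon}). That identification is exactly where the paper does the work, via Takeuchi's freeness criterion \cite[Theorem 1]{Ta}, using that $T$ is a free left and right $R$-module. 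Your normality argument can be repaired by inserting these two inputs, but as written it is unsupported; there is no direct route from $R^+T=TR^+$ to $x_1rS(x_2)\in R$.

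The second half contains a genuine gap. Your primary route rests on Theorem \ref{prowinding}(i)(d) ($\sigma$ is a winding automorphism), which requires hypothesis \textrm{(H)}; the remark immediately following Theorem \ref{invHOEs} in the paper makes precisely your $\chi=\varepsilon$ observation and notes that it only covers the case where $R\otimes R$ is a domain. Your proposed workaround does not remove this restriction: reducing modulo $R^+T$ replaces $v\in R\otimes R$ by its image $(\varepsilon\otimes\varepsilon)(v)\in k$, so the grouplike/unit argument in $k[\bar x]$ yields only $(\varepsilon\otimes\varepsilon)(v)=0$, not $v=0$. Consequently Lemma \ref{gplike}(i), which needs $v=0$ or $w=0$, cannot be applied; $a,b\in G(R)$ is not established; and the winding-automorphism description of $\sigma$ --- the crux of your argument --- is unavailable for a general Hopf $k$-algebra $R$. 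The paper sidesteps all of this: having identified $R$ as the coinvariants of $\pi\colon T\to T/R^+T\cong\mathcal{O}((k,+))$, it quotes \cite[Theorem 8.3]{GZ} to write $T=T^{\mathrm{co}\pi}[\tilde x;\partial]$ directly, with no hypothesis on the form of $\Delta(x)$ and no domain assumption on $R$.
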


\begin{proof} Since the extension $R \subseteq T$ is invariant,
$R^+T$ is a Hopf ideal of $T$, with $T/R^+ T \cong \mathcal{O}((k,+))$.
Clearly, $T$ is a free right and left $R$-module. Writing $\pi$ for
the canonical homomorphism from $T$ to $\mathcal{O}((k,+))$, we can
therefore apply \cite[Theorem 1]{Ta}, to conclude that
\begin{equation}
\label{hep}
T^{\mathrm{co}\pi} = ^{\mathrm{co}\pi}T = R.
\end{equation}
By \cite[Proposition 3.4.3]{Mon}, $R$ is a normal Hopf subalgebra
of $T$. Finally, by \cite[Theorem 8.3]{GZ},
\begin{equation}
\label{cat}
T \cong T^{\mathrm{co}\pi}[\tilde{x}; \partial],
\end{equation}
for a $k$-derivation $\partial$ of $T^{\mathrm{co}\pi}$.
Combining (\ref{hep}) and (\ref{cat}) completes the proof.
\end{proof}

\begin{rems}
\begin{enumerate}
\item
In the special case where $R \otimes R$ is a domain, most of the
above theorem follows very easily from Theorem \ref{permit}, since
that result forces $\sigma$ to be a left winding automorphism of $R$,
which can't fix $R^+$ unless $\sigma = \mathrm{id}$. It follows that
in this case $\delta = \partial$.
\item
Effectively what \cite[Theorem 8.3]{GZ} is proving that the extension
$R \subseteq T$ is cleft, in the language  of \cite[$\S$7.2]{Mon}.
\end{enumerate}
\end{rems}

\subsection{Variant HOEs over commutative coefficient rings.}
\label{varHOEs}

\begin{thrm} Let $R$ be an affine commutative
Hopf $k$-algebra, and an integral domain, so $R$ is the ring of
regular functions of the connected algebraic $k$-group $G$. Let $T
= R[x; \sigma, \delta]$ be a HOE of variant type.
\begin{enumerate}
\item
There is a change of variables such that $T = R[\tilde{x}; \sigma]$.
\item
The indeterminate $\tilde{x}$ is skew primitive, and $\sigma$ is a
winding automorphism of $R$ corresponding to a central element of $G$.
\item
\cite{Pa} Given a Hopf $k$-algebra $R$, and $\sigma, \tilde{x}$
satisfying the conditions in {\rm{(ii)}}, $T = R[\tilde{x}; \sigma]$ is
a Hopf algebra with $R$ as a Hopf subalgebra.
\end{enumerate}
\end{thrm}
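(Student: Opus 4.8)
The plan is to reduce the whole statement to a single assertion---that the $\sigma$-derivation $\delta$ is inner---and to read off everything else from the structure theory already in hand. First I would record the set-up. Since $R$ is a commutative domain, $R\otimes R$ is a domain, and hypothesis $\mathrm{(H)}$ holds by Lemma \ref{anti}(i), so Theorem \ref{prowinding}(i) applies to $T$: after a change of variable $\Delta(x)=a\otimes x+x\otimes 1+w$ with $v=0$, $a,b\in G(R)$, and by \eqref{winding} $\sigma=\tau^{\ell}_{\chi}=\mathrm{ad}(a)\circ\tau^{r}_{\chi}$ for a character $\chi$ of $R$. Commutativity of $R$ trivialises $\mathrm{ad}(a)$, so $\sigma=\tau^{\ell}_{\chi}=\tau^{r}_{\chi}$. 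Writing $\chi$ as evaluation at a point $g$ of $G$, the coincidence of the left and right winding automorphisms is precisely the statement that left and right translation by $g$ agree, i.e. $g\in Z(G)$; this already gives the centrality clause of (ii). Finally the variant hypothesis says, by \S\ref{HOEchar} (condition (\ref{gengood}(ic))), that $\mathfrak{m}:=R^{+}$ is not $\sigma$-stable, whence $\sigma\neq\mathrm{id}$ and $g\neq e$.

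The heart of the matter (part (i)) is to show $\delta$ is an inner $\sigma$-derivation. Here commutativity does almost all the work: comparing the two expansions of $\delta(rs)=\delta(sr)$ gives, for all $r,s\in R$,
\[ \delta(r)\,(s-\sigma(s))=\delta(s)\,(r-\sigma(r)). \]
Since $\sigma\neq\mathrm{id}$ we may fix $r_{0}$ with $r_{0}\neq\sigma(r_{0})$, and then $c:=\delta(r_{0})/(r_{0}-\sigma(r_{0}))\in\mathrm{Fract}(R)$ is independent of this choice, with $\delta(r)=c\,(r-\sigma(r))$ for every $r$. The one genuinely non-formal point---and the step I expect to be the main obstacle---is to show that $c$ lies in $R$ itself rather than merely in its fraction field. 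For this I would invoke the Nullstellensatz: the ideal $I=\langle\,r-\sigma(r):r\in R\,\rangle$ of $R=\mathcal{O}(G)$ has vanishing locus $\{h\in G: r(h)=r(hg)\ \forall r\}=\{h:hg=h\}$, which is empty exactly because $g\neq e$; hence $I=R$. Writing $1=\sum_{i}f_{i}\,(r_{i}-\sigma(r_{i}))$ then gives $c=\sum_{i}f_{i}\,\delta(r_{i})\in R$. Thus $\delta(r)=cr-\sigma(r)c$ is inner, and the substitution $\tilde{x}=x-c$ replaces $\delta$ by $0$, proving $T=R[\tilde{x};\sigma]$.

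It remains to see that $\tilde{x}$ is skew primitive, which finishes (ii). As $R[\tilde{x};\sigma]$ is again a HOE of the domain $R$, Theorem \ref{prowinding}(i) applies to it, so $\Delta(\tilde{x})=a'\otimes\tilde{x}+\tilde{x}\otimes b'+w'$ with $a',b'\in G(R)$ and $v'=0$. Applying $\Delta$ to the relation $\tilde{x}r=\sigma(r)\tilde{x}$ and comparing the terms of degree $0$ in $\tilde{x}$ (this is the $\tilde{\delta}=0$ case of \eqref{delta}) yields $w'\Delta(r)=\Delta\sigma(r)\,w'$ for all $r$. Since $R\otimes R$ is commutative this reads $w'\,\Delta\!\bigl(r-\sigma(r)\bigr)=0$, and taking $r$ with $\sigma(r)\neq r$, together with the facts that $R\otimes R$ is a domain and $\Delta$ is injective, forces $w'=0$. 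Hence $\Delta(\tilde{x})=a'\otimes\tilde{x}+\tilde{x}\otimes b'$, so $\tilde{x}$ is skew primitive, with $\sigma$ the winding automorphism attached to the central element $g$ found above. Part (iii) needs no new argument: it is exactly the converse direction of Panov's theorem \cite{Pa}, which assembles the data $(R,\sigma,\tilde{x})$ of (ii) into a Hopf algebra $R[\tilde{x};\sigma]$ having $R$ as a Hopf subalgebra.
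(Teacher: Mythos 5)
Your proof is correct, but it reaches the conclusion by a genuinely different route from the paper's. For part (i) the paper's argument is global and structural: it first proves the preparatory Lemma of $\S$\ref{varHOEs} (in particular $R+I(T)=T$ and $I(T)\cap R=0$), whose proof is the technical core — Cartier's semiprimeness theorem, localization at maximal ideals of $\overline{R}$, and transitivity of the winding automorphisms on $\Xi(T)$ — and then extracts $d\in R$ with $x-d\in I(T)$, from which innerness of $\delta$ falls out because $I(T)\cap R=0$. You instead exploit commutativity directly: the identity $\delta(r)(s-\sigma(s))=\delta(s)(r-\sigma(r))$ produces $c\in\Fract(R)$ with $\delta=c(\mathrm{id}-\sigma)$, and the Nullstellensatz applied to the fixed-point-free translation by $g\neq e$ shows $\langle r-\sigma(r):r\in R\rangle=R$, hence $c\in R$. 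This is markedly more elementary and self-contained (it makes no use of $I(T)$ at all), though it is tied to commutativity of $R$, whereas the paper's Lemma is stated for general affine or noetherian $R$ with $\sigma$ winding and so carries independent interest. For part (ii) the two arguments again diverge: the paper kills $w$ by observing that $\tilde{x}$ generates $I(T)$, applying right winding automorphisms to get $\sum w_1\chi(w_2)\in I(T)\cap R=0$ for all $\chi$, and invoking the Nullstellensatz; you kill $w'$ via the $\delta=0$ case of \eqref{delta}, i.e. $w'\Delta(r-\sigma(r))=0$, using that $R\otimes R$ is a domain and $\Delta$ is injective — a shorter computation that again needs only $\sigma\neq\mathrm{id}$. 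Your derivation of centrality of $g$ from $\tau^{\ell}_{\chi}=\mathrm{ad}(a)\circ\tau^{r}_{\chi}$ with $\mathrm{ad}(a)$ trivial is exactly what underlies the paper's corresponding claim, and part (iii) is handled identically (Panov's converse). The only points you leave tacit — that $R\otimes_k R$ is a domain for an affine commutative domain over algebraically closed $k$, and that a variant HOE with winding $\sigma$ forces $\sigma\neq\mathrm{id}$ — are standard and are also used without comment in the paper.
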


Although the theorem requires $R$ to be commutative, we state and
prove the preparatory lemma below under more general hypotheses. Note
that the hypothesis imposed on $\sigma$, that it is a winding
automorphism of $R$, always holds if $R \otimes R$ is a domain, by
Theorem \ref{permit}. Recall the notation concerning characters
introduced in $\S$\ref{classic}.

\begin{lemma}
Let $R$ be an affine or noetherian Hopf $k$-algebra, and let
$T = R[x;\sigma, \delta]$ be a variant HOE of $R$. Assume that
$\sigma$ is a left winding automorphism of $R$.
\begin{enumerate}
\item
$\Xi (T) \approx  \Xi(R/\langle \delta([R,R]) + [R,R] \rangle). $
\item
$R + I(T) = T.$
\end{enumerate}
\end{lemma}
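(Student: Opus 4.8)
The plan is to exploit the hypothesis that $\sigma$ is a \emph{left} winding automorphism in order to show that, for a variant extension, condition (\ref{gengood}(ic)) holds not merely for the ideals in the image of $\Psi$ but for \emph{every} character ideal of $R$; this is what collapses $\mathrm{im}\,\Psi$ onto the whole of $\Xi(R/J)$, where $J := \langle \delta([R,R]) + [R,R]\rangle$. Writing $\sigma = \tau^{\ell}_{\chi_0}$ for a character $\chi_0$ of $R$, I would first record the one-line identity $\psi \circ \sigma = \chi_0 \ast \psi$ (convolution), valid for every character $\psi$ of $R$, which is immediate from $\tau^{\ell}_{\chi_0}(r) = \chi_0(r_1)r_2$. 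Since the extension is variant, at least one character ideal of $R$ in $\mathrm{im}\,\Psi$ fails to be $\sigma$-invariant, so $\sigma \neq \mathrm{id}$ and hence $\chi_0 \neq \varepsilon$. As the characters of $R$ form a group under convolution with identity $\varepsilon$, it follows that $\chi_0 \ast \psi \neq \psi$ for \emph{every} $\psi$; equivalently, no character ideal $\mathfrak{m} = \ker\psi$ is $\sigma$-invariant, so (\ref{gengood}(ic)) holds throughout $\Xi(R)$.

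For part (i), I would then combine this with Theorem \ref{gengood}. On one hand, Theorem \ref{gengood}(i) gives $\mathrm{im}\,\Psi \subseteq \{\mathfrak{m} \in \Xi(R) : \delta([R,R]) \subseteq \mathfrak{m}\}$. On the other, any $\mathfrak{m} \in \Xi(R)$ with $\delta([R,R]) \subseteq \mathfrak{m}$ automatically satisfies (\ref{gengood}(ic)) by the previous paragraph, so Theorem \ref{gengood}(iii) supplies a unique $M \in \Xi(T)$ with $\Psi(M) = \mathfrak{m}$. Hence $\Psi$ is injective with image exactly $\{\mathfrak{m} : \delta([R,R]) \subseteq \mathfrak{m}\}$. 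Finally every character ideal contains $[R,R]$, the quotient being $k$, so this set equals $\{\mathfrak{m} : J \subseteq \mathfrak{m}\} = \Xi(R/J)$, and the restriction map $\Psi$ is the asserted bijection of algebraic sets; that it is moreover an isomorphism will come out of the comorphism computation in part (ii).

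For part (ii) I would pass to the character groups. As in $\S$\ref{classic}, $I(T) = \langle [T,T]\rangle$ is a Hopf ideal and, in characteristic $0$, $T/I(T)$ is a commutative affine Hopf algebra (invoking Molnar's theorem \cite{Mo} when $T$ is only noetherian), namely $\mathcal{O}(X(T))$; likewise for $R$. The map underlying $\Psi$ is then a homomorphism of affine algebraic groups $X(T) \to X(R)$ — it respects convolution precisely because $R$ is a Hopf subalgebra of $T$ — and it is injective by part (i). Over an algebraically closed field of characteristic $0$, an injective homomorphism of affine algebraic groups is a closed immersion, so the comorphism $\mathcal{O}(X(R)) = R/I(R) \to T/I(T) = \mathcal{O}(X(T))$ is surjective; composing with $R \twoheadrightarrow R/I(R)$ shows that $R \to T/I(T)$ is surjective, that is, $R + I(T) = T$.

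The main obstacle is exactly this last step: upgrading the set-theoretic bijectivity of $\Psi$ from (i) to surjectivity of the comorphism (equivalently, to $\Psi$ being a closed immersion). The characteristic-$0$ theory of algebraic groups packages this cleanly, but the content it hides is that the adjoined variable becomes redundant modulo $I(T)$: the defining relation yields, in the commutative ring $T/I(T)$, the identity $\bar{x}\,\bigl(\bar{r} - \overline{\sigma(r)}\bigr) = \overline{\delta(r)}$, which pins down $\bar{x}$ as a function of elements of $R$ only on the locus where $\overline{r - \sigma(r)}$ is invertible; assembling these local expressions into a single global element in the image of $R$ is precisely what the closed-immersion statement guarantees, and it is where I would expect the work to lie if one wanted to avoid citing the group-theoretic fact.
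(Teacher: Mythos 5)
Your argument for part (i) is essentially the paper's: the paper likewise notes that a variant extension forces $\sigma$ to be a non-identity winding automorphism, hence to move \emph{every} character ideal of $R$, and then quotes Theorem \ref{gengood}(i),(iii) together with Theorem \ref{HOEchar}; your convolution identity $\psi\circ\sigma=\chi_0\ast\psi$ simply makes the ``moves every character'' step explicit. For part (ii), however, you take a genuinely different and correct route. The paper works directly inside $\overline{T}:=T/I(T)$: it first proves $T^{+}=I(T)+R^{+}T$ by a Cartier-semiprimeness plus Nullstellensatz argument, transports this to every $M\in\Xi(T)$ using winding automorphisms, shows that $\overline{T}$ is locally a finitely generated module over $\overline{R}:=R/(I(T)\cap R)$ (the non-triviality of the kernel of $\overline{R}[Y]\twoheadrightarrow\overline{T}$ is where part (i) enters), and finally rules out $R+I(T)\subsetneq T$ by passing to a simple factor of a finitely generated localized module. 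You instead package all of this into the fact that an injective homomorphism of affine algebraic groups over an algebraically closed field of characteristic $0$ is a closed immersion, applied to $\Psi\colon X(T)\to X(R)$, whose injectivity is part (i); surjectivity of the comorphism $R/I(R)\to T/I(T)$ is then literally $R+I(T)=T$. This works, provided one records (as you do) that $T/I(T)$ and $R/I(R)$ are reduced commutative affine Hopf algebras (Nullstellensatz in characteristic $0$, with Molnar's theorem covering the merely noetherian case) and that $\Psi$ is a morphism of algebraic groups --- it respects convolution because $R$ is a Hopf subalgebra and is the map on maximal spectra induced by $R/I(R)\to T/I(T)$. Your route is shorter and more conceptual at the cost of importing the characteristic-$0$ separability theorem for algebraic groups; the paper's route is self-contained commutative algebra and yields the extra information $M=I(T)+\mathfrak{m}T$ for every $M\in\Xi(T)$. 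Your closing remark correctly locates where the paper's effort actually goes: the local pinning-down of $\bar{x}$ and its global assembly is exactly the localization step in the paper's proof.
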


\begin{proof}(i) Since the HOE is variant, $\sigma (R^+) \neq R^+$.
But $\sigma$ is thus a non-trivial winding automorphism, so
\emph{every} character of $R$ is moved by it. Hence (i) follows
from Theorems \ref{gengood}(i),(iii) and \ref{HOEchar}.

(ii) Note that if $R$ is noetherian then the commutative Hopf
algebra $R/I(R)$ is affine by Molnar's theorem \cite{Mo}. Hence
$T/I(T)$ is affine under either hypothesis on $R$. We show first that
\begin{equation}\label{counitepi}
I(T) + R^+ T = T^+ .
\end{equation}
The inclusion of the left side in the right is clear. Indeed,
by Theorem \ref{gengood}, $T^+$ must be the unique character ideal
of $T$ containing $R^+$. Now $I(T) + R^+ T$ is a Hopf ideal, since
$I(T)$ and $R^+$ are Hopf ideals of $T$, respectively of $R$.
Therefore, $T/(I(T) + R^+ T)$ is a commutative affine Hopf $k$-algebra
with a unique maximal ideal, namely $T^+ / (I(T) + R^+ T).$ By the
Nullstellensatz, this maximal ideal must be nilpotent. But, since
$k$ has characteristic 0, $T/(I(T) + R^+T)$ is semiprime by
Cartier's theorem \cite[Theorem 11.4]{Wa}, so the reverse
inclusion for (\ref{counitepi}) is proved.

Now, applying winding automorphisms of $T$ to (\ref{counitepi}),
bearing in mind that these preserve the Hopf subalgebra $R$ of $T$,
we deduce that, for all $M \in \Xi (T)$, setting $\mathfrak{m} = M \cap R$,
\begin{equation}\label{globalepi}
M = I(T) + \mathfrak{m}T.
\end{equation}
Since $T/M \cong k$, (\ref{globalepi}) can be restated as: for
every maximal ideal $\mathfrak{m}$ of $R$ with $I(T) \cap R
\subseteq \mathfrak{m}$,
\begin{equation} \label{final}
R + I(T) + \mathfrak{m}T = T.
\end{equation}

Set $\overline{R} := R/(I(T) \cap R)$, a commutative affine Hopf
algebra, which is therefore semiprime by a second application of
\cite[Theorem 11.4]{Wa}. To complete the proof of (ii) we show
first that $\overline{T} := T/I(T)$ is locally a finitely generated
$\overline{R}$-module. Let $Y$ be an indeterminate. There is an
algebra epimorphism $\psi$ from $\overline{R}[Y]$ onto $\overline{T}$,
sending the coset of $r + (I(T) \cap R)$ to $r + I(T)$ for $r \in R$,
and sending $Y$ to the coset of $X$ in $\overline{T}$. The kernel
of $\psi$ must be non-zero, since otherwise (i) would be contradicted.
Fix $\sum_{i=0}^n \bar{r}_i Y^i \in \mathrm{ker}\psi$, with
$\bar{r}_n \neq 0.$ Since $\overline{R}$ is semiprime, the
Nullstellensatz ensures that there is a maximal ideal $\mathfrak{m}$
of $\overline{R}$ with $\bar{r}_n \notin \mathfrak{m}.$ We therefore
deduce that $\overline{T}_{\mathfrak{m}}$ is a finitely generated
$\overline{R}_{\mathfrak{m}}$-module, namely
\begin{equation}\label{localfg}
\overline{T}_{\mathfrak{m}} = \sum_{i=0}^{n-1}
\overline{R}_{\mathfrak{m}}\bar{X}^i.
\end{equation}
Next, note that the winding automorphisms of $T$ preserve
$I(T) \cap R$, hence induce automorphisms of $\overline{R}$ which
permute transitively the maximal spectrum of that algebra. Thus,
applying these winding automorphisms to (\ref{localfg}), we see
that (\ref{localfg}) is actually valid for \emph{every} maximal
ideal of $\overline{R}$ (and in fact, although we shall not need
this, even with the same module generating set
$\{\bar{1}, \bar{X}, \ldots , \bar{X}^{n-1}\}$, because of the
nature of $\Delta (X)$ guaranteed by Theorem \ref{permit}(i)(b)).

Suppose now that
\begin{equation}\label{west}
I(T) + R \subsetneq T.
\end{equation}
Thus there is a maximal ideal $\mathfrak{n}$ of $\overline{R}$
such that $(T/(I(T) + R))_{\mathfrak{n}} \neq 0.$ But this
$\overline{R}_{\mathfrak{n}}$-module is just
$(\overline{T}/\psi(\overline{R})_{\mathfrak{n}}$, which is
finitely generated by (\ref{localfg}). As such, it has a simple
factor which is isomorphic to
$\overline{R}_{\mathfrak{n}}/\mathfrak{n}\overline{R}_{\mathfrak{n}}.$
However, this forces
$$ R + I(T) + \mathfrak{n}T \subsetneq T, $$
contradicting (\ref{final}). This completes the proof of (ii).
\end{proof}

\subsection{Proof of Theorem \ref{varHOEs}}
By Lemma \ref{varHOEs}(ii) there exists $d \in R$ such that
$x - d \in I(T).$ Thus, for all $r \in R$,
$$ I(T) \ni (x - d)r - \sigma(r)(x - d) = \delta (r) - dr
+ \sigma (r)d. $$
But this element also belongs to $R$, and $I(T) \cap R = 0$
by Lemma \ref{varHOEs}(i), so that
$$ \delta (r) = dr - \sigma (r)d; $$
in other words, $\delta$ is an inner $\sigma$-derivation of $R$.
Therefore, if we set $\tilde{x} := x - d$, then
$T = R[\tilde{x}; \sigma]$, by \cite[Exercise 2Y]{GW}.

(ii) Since $R \otimes R$ is a domain, Theorem \ref{permit} applies,
so that $\sigma$ is the winding automorphism given by a central
element of $G$, and, noting that $\tilde{x} \in I(T) \subseteq T^+$,
$\Delta (\tilde{x})$ has the form (\ref{copx}), for some group-like
element $a \in R$, and $w \in R \otimes R$. Write $w$ as
$w = w_1 \otimes w_2$, where the elements $\{w_1\}$ are linearly
independent over $k$.

From its definition, the ideal $I(T)$ is invariant under all algebra
automorphisms of $T$, and is generated by $\tilde{x}$. From the form
(\ref{copx}) for the coproduct of $\tilde{x}$ it follows that, for
each $\chi \in \Xi (T) = \Xi (R) = G$,
\begin{equation}\label{tau}
\tau^r_{\chi}(\tilde{x}) = \tilde{x} + r_{\chi} \in I(T),
\end{equation}
where $r_{\chi} = \sum w_1 \chi(w_2) \in R.$ Since $I(T) \cap R = 0$
we must have $r_{\chi} = 0$ for all $\chi \in G.$ However, $R$ is a
commutative affine domain by hypothesis, so the nullstellensatz
guarantees that, if $w \neq 0$, there exists $\chi \in G$ such that
$\chi(w_2) \neq 0$ for some term $w_2$ in the expression for $w$.
Then the linear independence of $\{w_1\}$ ensures that $r_{\chi}
\neq 0$. This is a contradiction, so $w$ must be 0; that is,
$\tilde{x}$ is skew primitive, as required.

(iii) This is a special case of \cite[Theorem 1.3]{Pa}.

\begin{rem}
Theorem \ref{varHOEs}(i) does not remain true if the commutativity
hypothesis on $R$ is omitted: consider for example
$T = U(\mathfrak{sl}(2, \mathbb{C})$, whose structure as IHOE is
described in  Examples \ref{iterdefn}(iii).
\end{rem}

\subsection{IHOEs satisfying a polynomial identity}
\label{PI}
Recall that, over a field of characteristic 0, an enveloping algebra
of a Lie algebra satisfies a polynomial identity only if the Lie
algebra is commutative, \cite{La}, (or see \cite{Pas} for the
infinite dimensional case). Evidence that, at least in
characteristic 0, the algebra structure of IHOEs resembles that of
enveloping algebras, is given by the following theorem.

\begin{thrm} If an IHOE $H$ over the field $k$ of characteristic 0
satisfies a polynomial identity, then $H$ is commutative.
\end{thrm}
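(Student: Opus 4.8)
The plan is to argue by induction on $n = \GKdim H$, the base case $n = 0$ (where $H = k$) being trivial. For the inductive step I would write $H = R[x;\sigma,\delta]$, where $R = H_{(n-1)}$ is an IHOE of GK-dimension $n-1$ by Theorem \ref{firstprop}(ii). Since $R$ is a subalgebra of the PI algebra $H$, any polynomial identity of $H$ is satisfied by $R$, so the induction hypothesis gives that $R$ is commutative. Being a commutative connected affine Hopf $k$-algebra, $R$ is then a polynomial algebra $\cO(U)$ for a unipotent group $U = X(R)$, by Examples \ref{iterdefn}(i). It therefore remains to prove the following: if $H = R[x;\sigma,\delta]$ is a HOE of a commutative connected affine Hopf $k$-algebra $R$ and $H$ satisfies a PI, then $\sigma = \id$ and $\delta = 0$, so that $H = R[x]$ is commutative.

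Because $R$ is connected, Proposition \ref{skewconn} applies with $a = 1$, giving $\Delta(x) = 1\otimes x + x\otimes 1 + w$ and, by Theorem \ref{prowinding}(i)(d), that $\sigma = \tau^{\ell}_{\chi}$ is a winding automorphism of $R$. First I would show $\sigma = \id$. Filtering $H$ by degree in $x$, the derivation term lowers degree, so the associated graded ring is the skew polynomial ring $\gr H \cong R[\bar x;\sigma]$; since in characteristic $0$ every PI algebra satisfies a multilinear identity, and multilinear identities survive passage to the associated graded, $R[\bar x;\sigma]$ is PI. Localising at the Ore set $R\setminus\{0\}$ yields the PI domain $K[\bar x;\sigma]$ with $K = \Fract R$, and a standard computation (using the $\bar x$-grading) shows that the centre of its Ore division ring of fractions $K(\bar x;\sigma)$ equals the fixed field $K^{\langle\sigma\rangle}$, over which the powers of $\bar x$ are linearly independent. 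This contradicts Posner's theorem unless $\sigma$ has finite order. But $\tau^{\ell}_{\chi}$ has the same order as $\chi$ in $X(R) = U$, and unipotent groups in characteristic $0$ are torsion-free (Examples \ref{iterdefn}(i)); hence $\chi = \varepsilon$ and $\sigma = \id$.

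With $\sigma = \id$ we have $H = R[x;\delta]$ for a derivation $\delta$ of the commutative domain $R$, and I would show $\delta = 0$. Suppose not. Extend $\delta$ to $K$ and note that $K[x;\delta]$, a localisation of $H$, is again PI. Choosing $q\in K$ with $c := \delta(q)\neq 0$, the elements $X := c^{-1}x$ and $q$ satisfy $[X,q]=1$, so the subalgebra they generate is a nonzero homomorphic image of the first Weyl algebra $A_1(k)$; as $A_1(k)$ is simple, this subalgebra is isomorphic to $A_1(k)$. Since $A_1(k)$ is not PI in characteristic $0$, this contradicts the fact that it embeds in the PI algebra $K[x;\delta]$. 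Hence $\delta = 0$ and $H = R[x]$ is commutative, completing the induction.

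I expect the crux to be the two structural facts about skew polynomial rings over a field, namely that an infinite-order $\sigma$ and a nonzero $\delta$ each obstruct the PI property; these are precisely where the characteristic-$0$ hypothesis and the winding/unipotent structure are genuinely used. By contrast, the Hopf-theoretic reduction to a commutative coefficient ring and the identification of $\sigma$ as a winding automorphism are comparatively formal, being immediate from the results of \S\ref{HopfOre} and \S\ref{iterOre}.
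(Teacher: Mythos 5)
Your proposal is correct, and its skeleton coincides with the paper's: the same induction on $\GKdim H$, the same reduction (via the inductive hypothesis) to a HOE $R[x;\sigma,\delta]$ over a commutative connected coefficient ring, and the same decisive Hopf-theoretic step, namely that $\sigma$ is a winding automorphism attached to a character of the torsion-free unipotent group $X(R)$, so that finite order forces $\sigma=\mathrm{id}$. Where you genuinely diverge is in how the PI hypothesis is converted into ring-theoretic information. The paper outsources this to two citations: Jondrup's result that $\mathrm{PIdegree}(R[x;\sigma,\delta])=\mathrm{PIdegree}(R[x;\sigma])$, which disposes of $\delta$ at a stroke, and Damiano--Shapiro to get finite order of $\sigma$; once $\sigma=\mathrm{id}$, the PI degree is $1$ and commutativity is immediate. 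You instead give self-contained arguments for both halves: passing to the associated graded ring $R[\bar x;\sigma]$ (multilinear identities survive), localising to $K[\bar x;\sigma]$ with $K=\Fract R$, and invoking Posner against the fact that the centre of $K(\bar x;\sigma)$ is the fixed field $K^{\langle\sigma\rangle}$ when $\sigma$ has infinite order; and then, with $\sigma=\mathrm{id}$, exhibiting a copy of the Weyl algebra $A_1(k)$ inside $K[x;\delta]$ whenever $\delta\neq 0$, which is impossible in a PI algebra in characteristic $0$. Both routes are valid; yours buys independence from \cite{Jo} and \cite{DS} at the cost of a little more work (in particular you should note explicitly that the Ore localisations involved remain PI, e.g.\ because they embed in the division ring of fractions of a PI domain, which is finite over its centre by Posner), while the paper's is shorter but leans on the literature.
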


\begin{proof} We induct on $n := \mathrm{GKdim} H,$ the result being
trivial for $n = 0.$

Let $\mathrm{GKdim}H = n$, and suppose the result is known for IHOEs
of smaller dimension. This forces $H$ to have the form
$H = R[x; \sigma, \delta]$ for a commutative polynomial Hopf
$k$-algebra $R$ in $n-1$ variables, the number of variables being
given by Theorem \ref{firstprop}(ii). By \cite{Jo},
\begin{equation}\label{Jon}
\mathrm{PIdegree}( H) = \mathrm{PIdegree}(R[x; \sigma]),
\end{equation}
and then it follows from \cite{DS} that $\sigma$ has finite order.
However, $\sigma$ is a winding automorphism of $R$, by Theorem
\ref{permit}(i)(d). By $\S$\ref{maxIHOE}, the character group $U$
of $H$ is unipotent. In particular, since $k$ has characteristic 0,
$U$ is torsion free. Therefore $\sigma$ must be the identity map,
and we deduce that
\begin{equation}\label{clinch}
\mathrm{PIdegree}(R[x;\sigma]) = 1.
\end{equation}
The result follows from (\ref{Jon}) and (\ref{clinch}).
\end{proof}

\section*{Acknowledgments}
The authors thank Christian Lomp for pointing out an error
in a previous version of the paper.
S. O'Hagan was supported by by DTA funds of the UK EPSRC.
J.J. Zhang was supported by the US National
Science Foundation (NSF grant DMS-0855743 and DMS-1402863).
Brown and Zhang gratefully acknowledge the financial support and
excellent working conditions of the Banff International Research
Station, Canada, the Mathematical Sciences Research Institute,
Berkeley, USA, and the Institute of Mathematics of the Polish
Academy of Science, Bedlewo, Poland.

%\bibliographystyle{plain}
%\bibliography{E:/Bibliography/MonicaBibliography.bib}
%\bibliography{C:/Bibliography/Bibliography}

\end{document}